\let\csname equation*\endcsname\relax
\let\csname endequation*\endcsname\relax
\newtheorem{theorem}{Theorem}[section]
\newtheorem{definition}{Definition}[section]
\newtheorem{lemma}{Lemma}
\newcommand{\dis}[1]{\displaystyle{#1}}
\newcommand{\Z}{\mathbb{Z}}
\newcommand{\Cb}{C}
\newcommand{\bfone}{{\bf1}}
\newcommand{\bfmu}{{\boldsymbol \mu}}
\newcommand{\st}{\text{ such that }}
\newcommand{\lra}[1]{\langle #1 \rangle}
\newcommand{\abs}[1]{\left\lvert #1 \right\rvert}
\newcommand{\norm}[1]{\left\lvert \abs{#1} \right\rvert}
\newcommand{\paren}[1]{\left( #1 \right)}
\newcommand{\bo}[1]{\boldsymbol{#1}}
\newcommand{\eps}{\varepsilon}
\DeclareMathOperator*{\argmin}{argmin}
\newcommand{\subs}[1]{{#1_i^{(n)}}}
\newcommand{\subj}[1]{{#1_j^{(n)}}}
\newcommand{\sub}[1]{{#1^{(n)}}}
\newcommand{\subl}[1]{{#1^{(N)}}}
\newcommand{\Nn}{\mathcal{N}}
\newtheorem{prop}{Proposition}
\theoremstyle{definition}
\theoremstyle{remark}
\newtheorem*{remark*}{Remark}
\renewcommand{\refname}
\begin{document}
\title{Efficient Estimation of Regularization Parameters via Downsampling and the Singular Value Expansion}
\author{Rosemary  A. Renaut$^1$,   
Michael Horst$^2$,   
Yang Wang$^3$,   
 Douglas Cochran$^4$ and  
 Jakob Hansen$^5$}
  \date{\today}
   \address{$^1$School of Mathematical and Statistical Sciences, Arizona State University, P.O. Box 871804, Tempe, AZ 85287-1804, \\
$^2$  Department of Mathematics,  The Ohio State University, 100 Math Tower, 231 West 18th Avenue, Columbus, OH 43210-1174, \\
$^3$ Department of Mathematics, The Hong Kong University of Science and Technology, Clear Water Bay, Kowloon, Hong Kong,\\
$^4$ School of Electrical, Computer and Energy Engineering, Ira A. Fulton Schools of Engineering, Arizona State University, P.O. Box 875706,  Tempe, AZ 85287-5706,\\
$^5$David Rittenhouse Laboratory, University of Pennsylvania, 209 S. 33rd St., Philadelphia, PA 19104,}
 \eads{\mailto{renaut@asu.edu}, \mailto{horst.59@osu.edu}, \mailto{yangwang@ust.hk}, \mailto{cochran@asu.edu}, \mailto{jhansen@math.upenn.edu}}

\begin{abstract}
The solution, $\bo x$, of the linear system of equations $A\bo x\approx \bo b$ arising from the discretization of an ill-posed integral equation with a square integrable kernel $H(s,t)$ is considered. The Tikhonov regularized solution $\bo x(\lambda)$  is found   as the minimizer of $J(\bo x)=\{ \|A \bo x -\bo b\|_2^2 + \lambda^2 \|L \bo x\|_2^2\}$. $\bo x(\lambda)$  depends on  regularization parameter  $\lambda$ that trades off the data fidelity,  and on the smoothing norm  determined by $L$.  Here we consider the case where $L$ is diagonal and invertible, and employ  the Galerkin method to provide the relationship between the singular value expansion and the singular value decomposition for square integrable kernels.  The  resulting approximation of the  integral equation  permits examination of  the properties of the regularized solution $\bo x(\lambda)$ independent of the sample size of the data. We prove that estimation of the  regularization parameter can   be obtained by consistently down sampling the data and the system matrix, leading to solutions of coarse to fine grained resolution.  Hence, the estimate of $\lambda$ for a large problem may be found by downsampling to a smaller problem, or to a set of smaller problems, effectively moving the costly estimate of the regularization parameter to the coarse representation of the problem.    Moreover, the full singular value decomposition for the fine scale system is replaced by a number of dominant terms  which is determined from the coarse resolution system, again reducing the computational cost. Numerical results illustrate the theory and  demonstrate the practicality of the approach for regularization parameter estimation using generalized cross validation, unbiased predictive risk estimation and the discrepancy principle applied for both the system of equations, and the augmented system of equations. 
\end{abstract}

\noindent\textbf{Keywords:} Singular value expansion, Singular value decomposition, Ill-posed inverse problem, Tikhonov regularization, Regularization parameter estimation  

\ams{65F22, 45B05}
\maketitle

\section{Introduction}
We consider numerical solutions of the Fredholm integral equation of the first kind 
\begin{align}
\int_{\Omega_t} H(s,t)f(t) \, dt = g(s), \quad s \in \Omega_s,  \label{eq:fred}
\end{align}
for real functions $f$ and $g$    defined on domains $\Omega_t$ and $\Omega_s$, respectively. The source function $f(t)$ is unknown, $H(s,t)$ is a known square-integrable kernel,  i.e. $\|H\|=(\int_{\Omega_s}\int_{\Omega_t} H^2(s,t) dt ds )^{1/2} < \infty$,  and square integrable $g(s)$ is approximated from sampled data.  For ease we assume   that $H$ is non-degenerate. The theoretical analysis for the existence of solutions of the integral equation is well studied e.g. \cite{baker,smithies} and uses the singular value expansion (SVE)  for the kernel $H$. For  the inner product    defined by 
\begin{align*}
\lra{f,\psi} =: \int_{\Omega_t}  {f(t)}\psi(t) \, dt, \quad \|f\|= \lra{f,f} ^{1/2},
\end{align*}
the SVE for $H$ is given by the mean convergent expansion
\begin{align} \label{eq:sve}
H(s,t) = \sum_{i=1}^\infty\mu_iu_i(s) {v}_i(t).
\end{align}
The sets of orthonormal functions $\{u_i(s)\}$ and $\{v_i(t)\}$ are complete and comprise the  left and right singular functions of the kernel,  respectively, 
\begin{align*}
\lra{H^*(s,t),u_i(s)} &= \int_{\Omega_s} H(s,t) u_i(s) \, ds  = \mu_iv_i(t),\\
\lra{H(s,t),v_i(t)} &= \int_{\Omega_t} H(s,t)v_i(t)\, dt  = \mu_iu_i(s).\end{align*}
The $\mu_i$, ordered such that $\mu_1\ge\mu_2\ge\ldots>0$, where positivity follows by the assumption of non degeneracy, are the singular values of $H$. By completeness of the singular functions and the square integrability of the kernel 
\begin{align*}
 \norm{H}_2^2 &= \sum\limits_{i=1}^\infty \mu_i^2<\infty.
 \end{align*}
 
Applying the SVE in \eqref{eq:fred} yields the mean convergent  expansion for $f$
 \begin{align}\label{contsoln}
   f(t) = \sum_{i=1}^\infty \frac{\lra{u_i(s),g}}{\mu_i} v_i(t) =:\sum_{i=1}^\infty \frac{\hat{g_i}}{\mu_i} v_i(t), 
 \end{align} 
which exists and is  square integrable  if and only if 
 \begin{align*}
 \sum_{i=1}^{\infty} \left(\frac{\hat{g_i}}{\mu_i}\right)^2 < \infty,
 \end{align*}
 cf. \cite{smithies}.
 It is immediate that  for square integrability of $f$  the coefficients ${\hat{g_i}}$ must eventually decay faster than the singular values. This requirement is known as  the {\em continuous Picard condition} \cite{Hansen}. If $g$ is error contaminated such that ultimately   $  {\hat{g_i}}$ roughly stagnate at the error level, the Picard condition will be violated. 
In particular, the  determination of $f$ from    $g$ is an ill-posed problem, even if a unique solution exists   $f$ will be  sensitive to errors in  $g$.  Additional constraint, or  stabilization, conditions are needed in order to estimate a suitable square integrable solution, e.g. by truncating the solution, and/or regularization by filtering, also known as Tikhonov regularization. Applying the Galerkin method for the numerical solution of the integral equation  provides a discretization for which the singular value decomposition (SVD) of the underlying system matrix is closely related to the SVE of the kernel  \cite{hansensvepaper}. The  properties of the discrete solution are closely related to those of the continuous solution, and hence  regularization of the discrete solution effectively regularizes the continuous solution for a sufficiently fine resolution. The regularization of the discrete system is the topic of this study. 

Vogel \cite[Chapter 7]{vogel:02}, analyzed  methods for selecting the  regularization parameter, denoted by $\lambda$,   under assumption of the existence of a singular system for the partially discrete operator mapping from the infinite dimensional to the discrete system of size $n$.  With the assumption of discrete data and stochastic noise,  the convergence for the error in the solution as an approximation to the true infinite dimensional solution, and the convergence of the predictive error for the solution, defined as how well the given solution predicts the true data, were examined. The results  employ   the singular system    to obtain the discrete solution in terms of the singular system, and hence also provide  estimates for the error, the predictive error, and the residual in terms of this system. The analysis explicitly includes the noise terms in the data. Assuming algebraic decay of the singular values and the coefficients of the true solution, at given algebraic rates, and that the orthogonal projection of the true solution onto the null space of the partially discrete operator vanishes as $n$  increases, yields asymptotic rates of convergence for the expected error norms, and predictive errors, for the truncated singular value decomposition (TSVD) solution to the discrete problem, i.e. regularizing by truncation, as well as for Tikhonov regularization. This formulation permits   comparison of the regularization methods, in terms of the convergence of the estimated $\lambda$  with $n$  and assumed rate parameters. In contrast, here we exploit the SVE-SVD relationship, with the assumption of   square integrability of $H(s,t)$, $f(t)$ and $g(s)$, to derive convergence results for  $\lambda$ with $n$, but without assuming specific rates of convergence of $\lambda$. Instead of the  TSVD regularization we consider the filtered TSVD solution, dependent on an effective numerical rank of the system.  As in Vogel \cite{vogel:02}, our results also consider the impact of noise in the data.

Although the connection between the regularized solutions of the continuous and discrete approximations is well-accepted, it appears that these convergence results  have not been extensively exploited practically in the context of efficient estimation of $\lambda$. Here we use the potentially nested set of  coarse to fine  discretizations of \eqref{eq:fred} by consistently defined   approximations to obtain $\lambda$ for a fine resolution solution, without applying estimation of $\lambda$  on the fine scale. This  approach   provides a cost effective mechanism for estimating $\lambda$ in the context of any  Tikhonov, or iterated Tikhonov \cite{SB,WoRo:07,Zhd}, regularization scheme for the solution of \eqref{eq:fred}, including in the presence of noise, assuming uniform sampling of the data.  The number of significant components in the SVD basis at the fine scale problem is determined from the coarse scale SVD basis, removing the high cost of forming the complete SVD for the high resolution system. Then, the  time consuming part of estimating $\lambda$ is played out  for the coarse scale representation of the problem only. 

  An outline of the paper now follows.  In order to use the approximate singular value expansion \cite{hansensvepaper} we  apply the Galerkin method for solving   \eqref{eq:fred}. This is briefly reviewed in section~\ref{sec:Galerkin}, and leads to the discretization formulae for the regularized discrete solution dependent on $\lambda$ and on sampling level $n$. Techniques for estimating $\lambda$ are reviewed in section~\ref{sec:regest}.   Many of these techniques require that some information about the noise contamination of the measurements is available, which leads to weighting, or left preconditioning, of the system matrix, as compared to right preconditioning  due to the invertible regularization operator $L$. We prove that the resulting left and right preconditioned   kernel is still square integrable;   section~\ref{mod kernel}. The convergence of $\lambda$ across scales is provided in section~\ref{sec:lambdaconv} based on theoretical results for the numerical rank in section~\ref{theory rank}. Practical implementation is described in section~\ref{sec:practical}, including the downsampling in section~\ref{sec:downsample}, estimating the numerical rank in section~\ref{sec:numericalrank} and the algorithm for parameter estimation in the presence of noise in section~\ref{sec:regparam}.    
  Numerical illustrations verifying the theoretical convergence results are provided in section~\ref{simulations}. In section~\ref{results} simulations for a smooth and a piece wise constant source $f(t)$, and an example with slowly decaying spectrum,  demonstrate the practicality of the technique.  Future work and conclusions are discussed in section~\ref{conclusions}.  
 
 \subsection{Notation}
  We first review the notation that is adopted throughout the paper. All variables in boldface $\bo {x}$, $\bo {b}$, etc. refer to vectors, with scalar entries e.g. $x_i$ distinguished from columns of a matrix $U$   given by $\bo u_i$. The proofs require the use of multiple resolutions for discretizing the functions. Any variable with a superscript $^{(n)}$ relates to that variable for a discretization with $n$ points, or an expansion with $n$ terms.  In general $\lambda$ is a regularization parameter, $A$ is a system matrix derived from  kernel $H$, the unknown source function is $f(t)$, samples of function $g(s)$ are assumed, and for any function $f$, $\hat{f}_i$ indicates the $i^{th}$ Galerkin coefficient of the function. For arbitrary vector $\bf y$ we denote $\bo y \sim \mathcal{N}(\bo y_0, C)$ to indicate that $\bo y$ is a random vector following a multivariate normal distribution with expected value, $E(\bo y) = \bo y_0$ and covariance matrix $C$. $L^2(\Omega)$ denotes the linear space  of square integrable functions on  $\Omega$. The weighted norm is given by $\| \bo x\|_W^2=\bo x^T W \bo x$.   For scalar $k$ the power  $\bo x^k$ indicates the component wise power for each component of vector $\bo x$. We also reserve $\zeta^2$ for the variance of white noise data. 

\section{Background Material}\label{sec:Galerkin}
\subsection{Approximating the Singular Value Expansion}
Suppose 
 $\{\phi_j(t)\}_{j=1}^\infty$ and $\{\psi_i(s)\}_{i=1}^\infty$ are orthonormal bases (ONB) for   $L^2\paren{\Omega_t}$  and  $L^2\paren{\Omega_s}$, respectively,  such that  
 \begin{align}\label{expansion}
  f(t) = \sum_{i=1}^\infty \lra{\phi_i(t), f(t)} \phi_i(t) \quad \mathrm{and} \quad g(s) = \sum_{i=1}^\infty \lra{\psi_i(s), g(s)} \psi_i(s).
 \end{align}
 The Galerkin method as a general discretization scheme for computing eigensystems was introduced in \cite[Section 3.8]{baker}, and extended as described in Algorithm~\ref{MM} for computing the SVE in \cite{hansensvepaper}.
 \begin{algorithm}
\caption{Galerkin Method for Approximating the SVE\label{MM} \cite{hansensvepaper}}
\begin{algorithmic}[1]
\Require    ONB    $\{\phi_j(t)\}_{j=1}^n$ and  $\{\psi_i(s)\}_{i=1}^n$, and kernel function $H(s,t)$.
\State  Calculate  the kernel matrix $A^{(n)}$ with entries $(a^{(n)}_{ij})$ 
\begin{align}\label{kernel matrix}
a^{(n)}_{ij} =: \lra{ \psi_i(s), \lra{H(s, t),  \phi_j(t)}} = \int_{\Omega_s}\int_{\Omega_t} \psi_i(s) H(s,t) \phi_j(t) dt  \,ds,\quad i,j =1:n.
\end{align}
\State Compute the SVD, $\sub{A} =\sub{U} \sub{\Sigma} (\sub{V})^T$, where $\sub{U}$ and $\sub{V}$ are orthogonal, 
\begin{align*}
\sub{U}= (u^{(n)}_{ij}), \,\,\sub{V}=(v^{(n)}_{ij}),\,\, \sub{\Sigma} =\mathrm{diag}(\sub{\sigma_1}, \dots, \sub{\sigma_n}), \,\, \sub{\sigma_1} \ge \dots \ge \sub{\sigma_n} >0.
\end{align*}
\State  Define 
\begin{align}\label{singvec}
\sub{\tilde{u}_i}(s) = \sum_{j=1}^n\sub{u_{ji}} \psi_j(s),  \quad \sub{\tilde{v}_i}(t) = \sum_{j=1}^n \sub{v_{ji}}\phi_j(t), \quad i=1:n.
\end{align}
\end{algorithmic}
\end{algorithm}

We give without proof the key results,  cf.  \cite[Theorems 1, 2, 4, 5]{hansensvepaper},  which relate the SVE and SVD singular systems. In particular, the singular values $\mu_i$, and singular functions $v_i(t)$, $u_i(s)$ for \eqref{eq:sve}  are approximated by $\sub{\sigma_i}$, $\sub{\tilde{v}_i}(t) $ and $\sub{\tilde{u}_i}(s) $, respectively, \cite{hansensvepaper}.
\begin{theorem}[SVE-SVD, \cite{hansensvepaper}]\label{SVE-SVD}
Suppose
\begin{align}\label{defDelta}
(\sub{\Delta})^2=:\|H\|^2 - \|\sub{A}\|_F^2.\end{align} Then the following hold for all $i$ and $n$, independent of the convergence of $\sub{\Delta}$ to $0$: 
\begin{enumerate}
\item{\label{thm1}} ${\sub{\sigma_i}} \le {\sigma^{(n+1)}_i} \le \mu_i$.
\item{\label{thm2}}   $0 \le \mu_i - {\sub{\sigma_i}} \le {\sub{\Delta}}$.
\item  ${\sub{\sigma_i}} \le \mu_i \le \sqrt{{(\sub{\sigma_i})^2}+{(\sub{\Delta})^2}}$.
\item{\label{thm4}}  If $\mu_i\ne\mu_{i+1}$, then $0 \le \max\{\|{u_i-{\sub{\tilde u_i}}}\|, \|{v_i-{\sub{\tilde v_i}}}\|\}  \le \sqrt{\frac{2 \, {\sub{\Delta}}}{\mu_i-\mu_{i+1}}}$.
\end{enumerate}
\end{theorem}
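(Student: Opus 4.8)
The plan is to realize each $\sub A$ as the leading $n\times n$ principal submatrix of a single \emph{infinite} matrix. Since $\{\phi_j\}$ and $\{\psi_i\}$ are complete orthonormal systems, the entries $a_{ij}=\lra{\psi_i(s),H(s,t)\phi_j(t)}$, $i,j\ge1$, are exactly the Fourier coefficients of $H$ in the product basis $\{\psi_i(s)\phi_j(t)\}$ of $L^2(\Omega_s\times\Omega_t)$. Hence the doubly-infinite array $\hat A=(a_{ij})$ represents the Hilbert--Schmidt operator $T$ with kernel $H$, its singular values are precisely the $\mu_i$, and by Parseval $\|\hat A\|_F^2=\sum_i\mu_i^2=\|H\|^2$. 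With this identification $\sub A$ is literally the top-left block of $\hat A$, and $(\sub\Delta)^2=\|H\|^2-\|\sub A\|_F^2$ is the squared Frobenius norm of the portion of $\hat A$ lying outside that block.

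For item~\ref{thm1} I would invoke the standard monotonicity of singular values under deletion of rows and columns (via the min--max characterization): passing to a submatrix cannot increase any $\sigma_k$. Applied to the nested blocks $\sub A\subset A^{(n+1)}\subset\hat A$ this gives $\sigma_i^{(n)}\le\sigma_i^{(n+1)}\le\mu_i$ in one stroke. Item~\ref{thm2} is a perturbation estimate: let $\hat A_n$ denote $\hat A$ with all entries outside the leading block set to zero, so that $\sigma_i(\hat A_n)=\sigma_i^{(n)}$ and $\|\hat A-\hat A_n\|_F=\sub\Delta$. Weyl's inequality for singular values then yields $\abs{\mu_i-\sigma_i^{(n)}}\le\|\hat A-\hat A_n\|_2\le\|\hat A-\hat A_n\|_F=\sub\Delta$, which combined with item~\ref{thm1} gives the two-sided bound. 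Item~3 is then purely algebraic: writing $(\sub\Delta)^2=\sum_k\mu_k^2-\sum_{k=1}^n(\sigma_k^{(n)})^2$ and using the interlacing $\sigma_k^{(n)}\le\mu_k$ from item~\ref{thm1}, the claim $\mu_i^2\le(\sigma_i^{(n)})^2+(\sub\Delta)^2$ reduces to $\sum_{k\ne i}(\sigma_k^{(n)})^2\le\sum_{k\ne i}\mu_k^2$, which holds term by term.

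The real work is item~\ref{thm4}, a perturbation bound for the singular \emph{vectors}. The key observation is that the residuals $r_1=T\,\sub{\tilde v_i}-\sigma_i^{(n)}\sub{\tilde u_i}$ and $r_2=T^*\sub{\tilde u_i}-\sigma_i^{(n)}\sub{\tilde v_i}$ collapse: since $\sub{\tilde v_i}$ lies in the span of the first $n$ functions $\phi_j$ and $\sub{\tilde u_i}$ in the span of the first $n$ functions $\psi_j$, the SVD relations for $\sub A$ give $r_1=P_n^{\perp}T\sub{\tilde v_i}$ and $r_2=Q_n^{\perp}T^*\sub{\tilde u_i}$, where $P_n,Q_n$ are the coordinate projections. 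These are governed by the off-diagonal blocks of $\hat A$, whence $\|r_1\|,\|r_2\|\le\sub\Delta$. I would then feed these residual bounds and the gap $\mu_i-\mu_{i+1}$ into a Wedin sin--$\theta$ estimate to obtain $\sin\theta\le\sub\Delta/(\mu_i-\mu_{i+1})$ for the angle $\theta$ between true and approximate singular directions, and finally convert via $\|u_i-\sub{\tilde u_i}\|^2=2(1-\cos\theta)\le2\sin\theta$ (valid on $[0,\pi/2]$) to reach $\sqrt{2\sub\Delta/(\mu_i-\mu_{i+1})}$. The main obstacle lies precisely here: one must verify the separation hypothesis that $\sigma_i^{(n)}$ is closer to $\mu_i$ than to any neighbouring $\mu_k$, so that the admissible gap really is $\mu_i-\mu_{i+1}$ (this is where $\mu_i\ne\mu_{i+1}$ and, implicitly, smallness of $\sub\Delta$ enter), and one must recognize that the crude step $1-\cos\theta\le\sin\theta$ is exactly what produces the square-root, rather than linear, dependence on $\sub\Delta$ recorded in the statement.
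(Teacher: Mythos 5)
First, a point of reference: the paper does not prove this theorem at all. It is quoted, explicitly ``without proof,'' from Hansen's 1988 paper \cite{hansensvepaper}, so your attempt can only be measured against that cited source and on its own merits. On that basis, your items (i)--(iii) are correct and are the natural arguments: realizing $\sub{A}$ as the leading block of the infinite coefficient matrix of the Hilbert--Schmidt operator $T$, monotonicity of singular values under passage to submatrices, Weyl's inequality applied to the zero-padded block, and the Parseval bookkeeping for (iii) are all sound. Your observation that the residuals $r_1=T\sub{\tilde v_i}-\sigma_i^{(n)}\sub{\tilde u_i}$ and $r_2=T^*\sub{\tilde u_i}-\sigma_i^{(n)}\sub{\tilde v_i}$ are exactly the off-diagonal blocks acting on unit vectors, hence of norm at most $\sub{\Delta}$, is also correct and useful.

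The genuine gap is in item (iv), and it is precisely the ``obstacle'' you flagged but left unresolved; it cannot be resolved in the form you propose. A sin-$\theta$ bound for the single pair $(\sub{\tilde u_i},\sub{\tilde v_i})$ needs the separation $\delta=\min_{k\ne i}\abs{\sigma_i^{(n)}-\mu_k}$, and the only control you have is item (ii), which places $\sigma_i^{(n)}$ in $[\mu_i-\sub{\Delta},\mu_i]$. That yields only $\delta\ge(\mu_i-\mu_{i+1})-\sub{\Delta}$ on one side, so Wedin gives $\|u_i-\sub{\tilde u_i}\|^2\le 2\sub{\Delta}/\bigl((\mu_i-\mu_{i+1})-\sub{\Delta}\bigr)$, which is strictly weaker than the claimed $2\sub{\Delta}/(\mu_i-\mu_{i+1})$ for every $0<\sub{\Delta}<\mu_i-\mu_{i+1}$; the trivial bound $\|u_i-\sub{\tilde u_i}\|\le\sqrt2$ only rescues the regime $\sub{\Delta}\ge\mu_i-\mu_{i+1}$. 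Worse, the other side of the separation is $\mu_{i-1}-\mu_i$, which does not appear in the statement at all, and no one-vector sin-$\theta$ argument can avoid it: if the first $n$ basis functions capture $u_{i-1}$ imperfectly, the sorted index matching flips. Concretely, take $\mu_1=1+\eps$, $\mu_2=1$, $\mu_k=0$ for $k\ge3$, $\phi_j=v_j$, $\psi_2=u_2$, and $\psi_1=\sqrt{1-c^2}\,u_1+c\,w$ with $w\perp\mathrm{span}\{u_k\}$ and $c^2>1-(1+\eps)^{-2}$. Then $A^{(2)}=\mathrm{diag}\bigl((1+\eps)\sqrt{1-c^2},\,1\bigr)$, the ordering flips, and $\tilde v_2^{(2)}=v_1$, so $\|v_2-\tilde v_2^{(2)}\|=\sqrt2$, while $\Delta^{(2)}=(1+\eps)c$ can be made $O(\sqrt\eps)$, so the claimed bound $\sqrt{2\Delta^{(2)}/(\mu_2-\mu_3)}$ is arbitrarily small. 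In other words, item (iv) in the bald form stated here --- ``for all $i$ and $n$, independent of the convergence of $\sub{\Delta}$,'' with only the lower gap --- is not provable by your route or any other; the hypotheses under which the singular-function bound actually holds (in effect, $\sub{\Delta}$ small relative to the neighbouring gaps, or a two-sided gap in the denominator) must be imported from \cite{hansensvepaper}.

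Finally, even where the separation is adequate, the argument that produces the square root with the clean constant is not sin-$\theta$ followed by $1-\cos\theta\le\sin\theta$, but an identity you already have in hand: since $\sub{\tilde u_i}$ and $\sub{\tilde v_i}$ lie in the spans of the first $n$ basis functions, $\lra{\sub{\tilde u_i}, T\sub{\tilde v_i}}=\sigma_i^{(n)}$ exactly. Expanding $T$ in the SVE writes $\sigma_i^{(n)}=\sum_k \mu_k\lra{u_k,\sub{\tilde u_i}}\lra{v_k,\sub{\tilde v_i}}$, and combining this with item (ii) bounds the inner-product defects $1-\lra{u_i,\sub{\tilde u_i}}$ and $1-\lra{v_i,\sub{\tilde v_i}}$ \emph{linearly} in $\sub{\Delta}/(\mu_i-\mu_{i+1})$ (cleanly for $i=1$; for general $i$ one again needs control of alignment with the leading singular functions, which is exactly where the neighbouring-gap hypotheses enter). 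Then $\|u_i-\sub{\tilde u_i}\|^2=2\bigl(1-\lra{u_i,\sub{\tilde u_i}}\bigr)$ delivers the square-root form directly, which is the structure of the proof in the cited source.
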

Taking the inner product in \eqref{eq:fred}    defines the discrete system of equations 
\begin{align}\label{discrete system}
\sub{\bo b} = \sub{A} \sub{\bo x}, \quad \text{with} \quad \sub{b_i}=\lra{\sub{\psi_i}(s), g(s)}, \quad \text{and}\quad \sub{x_i}=\lra{\sub{\phi_i}(t), f(t)},
\end{align}
with the truncated approximation to \eqref{expansion} 
\begin{align*}
f(t) \approx \sub{f}(t)=: \sum_{i=1}^n \sub{x_i} \phi_i(t) \quad \mathrm{and} \quad g(s) \approx \sub{g}(s)=: \sum_{i=1}^n \sub{b_i} \psi_i(s).
\end{align*} 
Solving \eqref{discrete system} for $\sub{\bo x}$ using the SVD for $\sub{A}$  yields the approximation \cite[(26)]{hansensvepaper} for \eqref{contsoln} 
\begin{align*}
 \sub{f}(t)&=  \sum_{j=1}^n\left(\sum_{i=1}^n \frac{(\sub{\bo u_i})^T \sub{\bo b}}{\sub{\sigma_i}} \sub{  v_{ji}}\right) \phi_j(t) \\
 &= \sum_{i=1}^n \frac{(\sub{\bo u_i})^T \sub{\bo b}}{\sub{\sigma_i}} \left(\sum_{j=1}^n\sub{ v_{ji}} \phi_j(t) \right) =  \sum_{i=1}^n \frac{(\sub{\bo u_i})^T \sub{\bo b}}{\sub{\sigma_i}}\sub{\tilde{v}_i}(t)\\
 &= \sum_{i=1}^n \frac{\sub{\beta_i}}{\sub{\sigma_i}} \sub{\tilde{v}_i}(t).
\end{align*} 
Here
\begin{align} \label{betacoeff}
\sub{\beta_i}=(\sub{\bo u_i})^T \sub{\bo b} = \lra{\sub{\tilde{u}_i}(s), g(s)} \approx \lra{u_i(s), g(s)} = \hat{g}_i,\end{align}  which follows from Theorem~\ref{SVE-SVD} and  the definitions in \eqref{singvec}.
Therefore, regularizing  through the introduction of filter factors $\sub{q_i}$, $i=1:n$, 
\begin{align}\label{solnfilter}
 \sub{f_{\text{Reg}}}(t)=: \sum_{i=1}^n \sub{q_i} \frac{\sub{\beta_i}}{\sub{\sigma_i}} \sub{\tilde{v}_i}(t), 
\end{align}
 yields an approximate regularized solution of the continuous function \cite{hansensvepaper}. Practically, we suppose $\sub{q_i}=\sub{q}(\sub{\lambda}, \sub{\sigma_i})$ for regularization parameter $\sub{\lambda}$ and desire to find a suitable choice for $\sub{\lambda}$ such that solution  $\sub{f_{\text{Reg}}}(t)$
 is square integrable. Equivalently, from \eqref{solnfilter}, square integrability translates to the requirement that for vector $\bo k$ with entries $\bo k_i=\sub{q_i} \sub{\beta_i}/\sub{\sigma_i}$, $i=1\dots n$, $\|\bo k\|_2^2<\infty$. 
 
 \subsection{Regularization parameter estimation}\label{sec:regest}
Without loss of generality we drop superscripts on the variables,  and  suppose for the moment that the system matrix $A$ in \eqref{discrete system}  is of size $n\times n$, and $g(s)$ is sampled at $n$ points.  We consider the solution of the Tikhonov regularized problem 
\begin{align}\label{regform}
\bo{ x}_{\text{Reg}}(\lambda)=  \argmin_{\bo x} \{ \|\bo R(\lambda)\|_2^2\}=: \argmin_{\bo x} \left\{\|{ A}\bo{x}-\bo{ b}\|_2^2+\lambda^2\|{\bo{x}}\|_2^2\right\},
\end{align}
where here $\bo R$ defines the residual for the approximate augmented system, $[A; \lambda I] \bo{x} \approx [\bo{b};\bo{0}]$. Consistent with the topic of this paper, we assume that the SVD is used to write the filtered solution  
\begin{align}\label{regsoln}
 \bo{ {x}}_{\text{Reg}}(\lambda)= \sum_{i=1}^n \frac{\sigma_i^2}{\lambda^2+\sigma_i^2}\frac{\bo u_i^T \bo b}{\sigma_i}  \bo v_i = \sum_{i=1}^n q(\lambda,\sigma_i)\frac{\beta_i}{\sigma_i}  \bo v_i, \quad q(\lambda,\sigma)=\frac{\sigma^2}{\lambda^2+\sigma^2 },
\end{align}
and  we make the observation $\| \bo{ {x}}_{\text{Reg}}(\lambda)\|_2^2= \|\bo k\|_2^2$. In the subsequent discussion of the methods for finding the regularization parameter we also use the SVD, in line with the overall theme of this paper which is focused on the use of the SVD and not on other techniques to find the solution.

Many methods exist for determining a regularization parameter $\lambda$ which will yield an acceptable solution with respect to some measure defining \textit{acceptable}. Amongst others, these   include the Morozov discrepancy principle (MDP) \cite{morozov}, the unbiased predictive risk estimator (UPRE) \cite{vogel:02}, generalized cross validation (GCV) \cite{golub1979generalized} and the $\chi^2$ principle applied for the augmented system (ADP) \cite{mere:09}.   The  MDP, UPRE and ADP  all assume prior   statistical information on the noise in the measurements $\bo b$. For simplicity in the presentation of the relevant functionals we assume that  $\bo b_{\text{obs}}=\bo b + \bo e$ where the noise is white with variance $\zeta^2$,  $\bo e \sim \Nn(0,  \zeta^2 I_n)$.  Then parameter $\lambda$ is found as the  minimum of a nonlinear functional for both the UPRE and GCV,  but the root of a monotonically increasing function for both the  MDP and ADP.  Defining residual $\bo r(\lambda) = A \bo x(\lambda)- \bo b$ and   influence matrix $A(\lambda) = A^T(A^TA+\lambda^2I)^{-1}A^T$,   the relevant functionals are given by 
\begin{align}\label{MDP}
\textrm{MDP}:\quad  &D(\lambda) = \|\bo r(\lambda)\|_2^2= \sum_{i=1}^n \left(1-q(\lambda,\sigma_i)\right)^2 \beta_i^2  =\zeta^2 \tau, \quad 0<\tau<n, \\ \label{ADP}
\textrm{ADP}:\quad  &C(\lambda)= \|\bo R(\lambda)\|_2^2 = \sum_{i=1}^n \left(1-q(\lambda,\sigma_i)\right)\beta_i^2    =\zeta^2 n, \\ \label{upre}
\textrm{UPRE}:\quad  &U(\lambda)=\|\bo r(\lambda)\|_2^2 +2 \zeta^2  \mathrm{trace}(A(\lambda))= \sum_{i=1}^n \left(1-q(\lambda,\sigma_i)\right)^2 \beta_i^2 + 2 \zeta^2  \sum_{i=1}^n q(\lambda,\sigma_i),\\ \label{gcv}
\textrm{GCV}:\quad  &G(\lambda)=\frac{n^2 \|\bo r(\lambda)\|_2^2}{ \left(\mathrm{trace}(I- A(\lambda))\right)^2}=\frac{n^2 \sum_{i=1}^n \left(1-q(\lambda,\sigma_i))\right)^2 \beta_i^2}{\left( n-\sum_{i=1}^n q(\lambda,\sigma_i)\right)^2}.
\end{align}
Here, the introduction of the weight $n^2$ in the numerator of $G$ is non standard but convenient, and does not impact the location of the minimum. For \eqref{MDP} we note that the specific derivation depends on the $\chi^2$ distribution of the residual, which,  for a square  invertible system,  has theoretically $0$ degrees of freedom. Heuristically, it is standard to use    $\tau=n$, for white noise with variance $1$, so that the solution fits the data on average to within one standard deviation \cite{ABT}. On the other hand, the ADP functional also arises from a statistical analysis but as stated here is  under the assumption that $\bo x_0=0$. Then $C(\lambda)/\zeta^2 \sim \chi^2(n)$, i.e. the weighted functional  follows a $\chi^2$ distribution with $n$ degrees of freedom, so that $E(C(\lambda))=n\zeta^2$. 

For completeness we also comment on the more general problem  e.g. \cite{Hansen,Regtools,vogel:02},    
\begin{align}\label{genregsoln}
\bo x_{\text{Reg}}(\lambda, L, W, \bo x_0)=\argmin_{\bo x} \{J(\bo x)\}=\argmin_{\bo x} \left\{\norm{A\bo{x}-\bo{b}}_W^2+\lambda^2\norm{L(\bo{x}-\bo x_0)}_2^2\right\}.
\end{align}
 When  $L$ is invertible we may solve with respect to  the transformation $\bo y = L \bo{x}$  for the system with matrix $AL^{-1}$, hence mapping the basis for the solution. Often, however, $L$ is chosen as a noninvertible smoothing operator, say approximating a derivative operator. This case is not considered here, requiring extensions of the generalized singular value decomposition (GSVD) instead of the SVD.  In \eqref{genregsoln} $\bo x_0$  constrains the solution to be close to $\bo x_0$. For the ADP it is assumed that $E(\bo x) = \bo x_0$, for the solution, $\bo x$, considered as a stochastic variable.   Assuming $W$ is positive definite,  with $\tilde{A}=W^{1/2}A$ and  $\tilde{\bo b}=W^{1/2}(\bo b - A \bo x_0)$,     
\begin{align*}
\bo{ \tilde{x}}_{\text{Reg}}(\lambda)=  \argmin_{\bo x} \left\{\|{\tilde{A}\bo{x}-\bo{\tilde{b}}}\|_2^2+\lambda^2\|{\bo{x}}\|_2^2\right\} + \bo x_0.
\end{align*}
When $W = \Cb^{-1}$ the noise in $\bo b$ is whitened and   \eqref{MDP}-\eqref{upre} apply with $\zeta^2=1$. We assume from here on that $L$  is diagonal and invertible,  and only consider \eqref{regform} in the remaining discussion concerning the numerical implementation. We note that  we do still need to assess whether this left and right preconditioning of the original system matrix  $A$, by $W^{1/2}$ on the left or $L^{-1}$ on the right, resp., has an impact on the  square integrability of the  modified kernel and will discuss this in section~\ref{mod kernel}.

Practically, it is useful  to truncate the expansion for  $\bo{  {x}}_{\text{Reg}}(\lambda)$ at the numerical rank of the matrix $A$, dependent on the machine precision, effectively removing from \eqref{regsoln} terms dependent on the smallest singular values, those which are not resolved due to the numerical precision of the software environment.  This is equivalent to taking the filter factors $q(\lambda, \sigma_i)=0$  for small $\sigma_i$. Typically, these coefficients would be filtered out by the appropriate choice of $\lambda$, but for the purposes of our analysis it is helpful to determine the $i$ for which the coefficient becomes zero with respect to the machine precision, and introduce the concept of numerical rank. 
\begin{definition}[Numerical Rank]\label{numericalrank}
We define the numerical rank with respect to a given precision $\epsilon$ to be $p=\{ \max{i} : \sigma_i > \epsilon\}$. 
\end{definition} 
Because of the ordering of the singular values,  and defining $q(\lambda,\sigma_i)=0$ for $i>p$ so that the last $n-p$ terms are removed, \eqref{regsoln} is replaced by the truncated solution 
\begin{align*}
 \bo{ {x}}_{\text{TSVDReg}}(\lambda)= \sum_{i=1}^p q(\lambda,\sigma_i)\frac{\bo u_i^T \bo b}{\sigma_i}  \bo v_i, 
\end{align*}
 e.g. \cite{Hansen,RHM:10}.  Consequently,    functionals \eqref{MDP}-\eqref{gcv} are modified. For example, in \eqref{ADP}  we obtain  
 \begin{align*}
C(\lambda)= \sum_{i=1}^p \left(1-q(\lambda,\sigma_i)\right)\beta_i^2     + \sum_{i=p+1}^n  \beta_i^2 .
 \end{align*}
 In this case the analysis still finds $E(C(\lambda)/\zeta^2)\sim\chi^2(n)$ \cite{RHM:10}, but now  $E(\sum_{i=p+1}^n  \beta_i^2) =\zeta^2 (n-p)$ for large enough $n-p$ \cite{mere:09}.  For the UPRE functional the truncation simply introduces constant terms which can thus be ignored in the minimization.  We obtain the truncated expressions
 \begin{align}\label{TMDP}
D_{\text{T}}(\lambda) &= \sum_{i=1}^p \left(1-q(\lambda,\sigma_i)\right)^2 \beta_i^2  =\tau \zeta^2 , \quad 0<\tau<p, \\ \label{TADP}
C_{\text{T}}(\lambda)&=  \sum_{i=1}^p \left(1-q(\lambda,\sigma_i)\right)\beta_i^2   =\zeta^2 p, \\  \label{Tupre}
U_{\text{T}}(\lambda)&=\sum_{i=1}^p \left(1-q(\lambda,\sigma_i)\right)^2 \beta_i^2 + 2 \zeta^2 \sum_{i=1}^p q(\lambda,\sigma_i), \\ \label{Tgcv}
G_{\text{T}}(\lambda)&=\frac{n^2 \left(\sum_{i=1}^p \left(1-q(\lambda,\sigma_i)\right)^2 \beta_i^2 +\sum_{i=p+1}^n \beta_i^2\right)}{\left(n-p+ \sum_{i=1}^p q(\lambda,\sigma_i)\right)^2}.
\end{align}
Observe that the GCV apparently needs the coefficients $ {\beta_i}$ for all $i$, regardless of choice of $p$. On the other hand for orthogonal $U$ and given $\bo b$ of length $n$ we note 
\begin{align*}  
\|\bo b\|_2^2 =\|U^T\bo b\|_2^2 = \sum_{i=1}^p \beta^2_i + \sum_{i=p+1}^n \beta_i^2\quad \text{yields}  
\sum_{i=p+1}^n \beta_i^2  = \|\bo b\|^2 -  \sum_{i=1}^p \beta^2_i.
\end{align*}
Thus indeed the GCV can be evaluated  without a complete SVD for the system of size $n$. 

As to the choice of the analysis of these methods, and exclusion of other regularization parameter selection techniques, we note that there are numerous techniques that can be applied.  We did not choose to discuss  the well-known L-curve, e.g. \cite{HansenLC}.  There the techniques for analysis are somewhat different, requiring the analysis of the curvature of the L-curve. Vogel \cite{vogel:02} did  consider the L-curve, however with less positive results. In particular, he found that the L-curve either becomes flat with increasing $n$, or gives a value that does not lead to mean square convergence of the error. Thus here we have chosen to consider the particular selection methods   and ignore for now the L-curve. 
To apply  regularization parameter selection techniques practically  in the context of the SVE-SVD relation,  we first examine the determination of the numerical rank $p$ for the  set of system matrices  $\{\sub{A}\}$ with increasing $n$ and the square integrability of the kernel $H(s,t)$ under the variable mappings that  correspond to the left and right preconditioning of  the matrix $A$.

\section{Theoretical Results}\label{theoryresults}
\subsection{Square integrability of the weighted  kernel}\label{mod kernel}
The theoretical justification for using an estimate of the regularization parameter $\lambda$ obtained from a down sampled set of data, and appropriately down sampled kernel matrix, relies primarily on the results on  the SVE-SVD relationship discussed in Theorem~\ref{SVE-SVD}.  But as noted for \eqref{genregsoln} it is important to discuss the impact of the left and right preconditioning of the matrix $A$ which results from replacing $A$ by $A=W^{1/2} A L^{-1}$ in \eqref{regform}.  For diagonal matrices $W$ and $L$ it is immediate that premultiplication by $W^{1/2}$ amounts to a row scaling and post multiplication by $L^{-1}$ to a column scaling. Consistent with the practical data we suppose that $W \approx C^{-1}$ for symmetric positive definite (SPD) matrix $C$. Then  $C^{-1/2}$ is a sampling of a function $c(s)\ne 0$, by $C$  SPD,  and $L$ the sampling of a function $\ell(t)\ne 0$, by the invertibility of $L$. The kernel is replaced by a weighted rational kernel $\tilde{H}(s,t)= H(s,t)/(c(s)\ell(t))$. 
\begin{theorem}\label{thm:weighted kernel}
For bounded  functions $c(s)>c_0>0$ and $|\ell(t)|>\ell_0> 0$  defined on $\Omega_s$ and $\Omega_t$ respectively, the square integrability of the weighted kernel $\tilde{H}(s,t)=H(s,t)/(c(s)\ell(t))$ defined on $\Omega_s\times\Omega_t$ follows from the square integrability of $H$ on the same domain. 
\end{theorem}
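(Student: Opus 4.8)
The plan is to reduce the claim to a uniform pointwise bound on the weighting factor followed by a single application of monotonicity of the integral. First I would observe that under the hypotheses $c(s) > c_0 > 0$ and $|\ell(t)| > \ell_0 > 0$, the reciprocal weight $1/(c(s)\ell(t))$ is everywhere defined on $\Omega_s \times \Omega_t$ and, because the reciprocal map is order-reversing on the positive reals, satisfies the uniform estimate
\[
\frac{1}{|c(s)\ell(t)|} \le \frac{1}{c_0 \ell_0}.
\]
Squaring then gives $1/(c^2(s)\ell^2(t)) \le 1/(c_0^2 \ell_0^2)$, a constant independent of $(s,t)$.

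Next I would use this to dominate $\tilde{H}$ pointwise. Writing $\tilde{H}^2(s,t) = H^2(s,t)/(c^2(s)\ell^2(t))$ and inserting the bound from the first step yields
\[
\tilde{H}^2(s,t) \le \frac{1}{c_0^2 \ell_0^2}\, H^2(s,t) \quad \text{for all } (s,t) \in \Omega_s \times \Omega_t.
\]
Integrating this inequality over the product domain and factoring out the constant produces $\|\tilde{H}\|^2 \le (c_0^2 \ell_0^2)^{-1}\|H\|^2$, and since $H$ is square integrable the right-hand side is finite; hence $\tilde{H} \in L^2(\Omega_s \times \Omega_t)$, which is the desired conclusion.

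The only point requiring a word of care --- and it is the closest thing to an obstacle, though a mild one --- is that the integral of $\tilde{H}^2$ be meaningful in the first place, i.e.\ that $\tilde{H}$ be measurable so that the termwise comparison is legitimate. This is immediate: $H$ is measurable by square integrability and $1/(c(s)\ell(t))$ is a bounded measurable function by virtue of the lower bounds $c_0,\ell_0$, so their product is measurable and the dominated integrand is nonnegative. I note in passing that the boundedness of $c$ and $\ell$ from above, although assumed in the statement, is not needed for this implication; only the strict positive lower bounds enter the estimate, the upper bounds being relevant instead to controlling the conditioning of the associated scaling matrices $W$ and $L$.
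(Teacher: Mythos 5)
Your proof is correct and takes essentially the same approach as the paper: the paper's one-line argument is exactly your pointwise bound $1/(c(s)\ell(t)) \le 1/(c_0\ell_0)$ pulled through the $L^2$ norm, giving $\|\tilde{H}\| \le \frac{1}{c_0\ell_0}\|H\| < \infty$. Your added remarks on measurability and on the upper bounds being unnecessary are sound but go beyond what the paper records.
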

\begin{proof}
The proof is immediate from
\begin{align*}
\|\tilde{H}\| = \| \frac{1}{c(s)} H(s,t) \frac{1}{\ell(t)}\| \le \frac{1}{c_0 \ell_0} \|H\| <\infty.
\end{align*}
\end{proof}

We have thus determined that we may use the relation between the SVE and the SVD for the      left and right preconditioned kernel, $\tilde{H}$. Furthermore, data $g(s)$ and source function $f(t)$ are mapped accordingly without impacting the analysis, i.e. we have the mapped source $\tilde{f}(t) = \ell(t) f(t) $ and mapped data $\tilde{g}=g(s)/c(s)$.  
\subsection{Convergence of the SVD to the SVE and Numerical Rank}\label{theory rank}
Although Theorem~\ref{SVE-SVD} summarizes the primary results used in our analysis, some additional results are useful for further analysis with respect to the numerical rank.  First we focus on $\sub{\Delta}$ defined in \eqref{defDelta} which provides an estimate of the error in the estimation of $H$ using $\sub{A}$.  If $\lim\limits_{n\to\infty}{\sub{\Delta}}=0$, then matrix $\sub{A}$ effectively becomes independent of its discretization in providing an accurate representation of the integral with kernel $H$.   
\begin{prop}\label{prop1}Suppose ONB $\{\phi_j\}_{j=1}^\infty$ and $\{\psi_i\}_{i=1}^\infty$  in Algorithm~\ref{MM}   are complete, then 
$\lim\limits_{n\to\infty} \left(\sub{\Delta}\right)^2 = 0$.
\end{prop}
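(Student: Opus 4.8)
The plan is to recognize the quantity $(\sub{\Delta})^2 = \|H\|^2 - \|\sub{A}\|_F^2$ as the tail of a Parseval expansion and then invoke completeness. The key observation is that the matrix entries $a^{(n)}_{ij} = \lra{\psi_i(s), H(s,t)\phi_j(t)} = \int_{\Omega_s}\int_{\Omega_t}\psi_i(s)H(s,t)\phi_j(t)\,dt\,ds$ are precisely the generalized Fourier coefficients of $H$ with respect to the product functions $\psi_i(s)\phi_j(t)$ on $\Omega_s\times\Omega_t$. Note that these entries do not actually depend on $n$; only the range of indices retained in $\sub{A}$ does. So I would drop the superscript and write $a_{ij}$, with $\|\sub{A}\|_F^2 = \sum_{i=1}^n\sum_{j=1}^n a_{ij}^2$ being the partial sum over the $n\times n$ index block.

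The central step is to establish that $\{\psi_i(s)\phi_j(t)\}_{i,j=1}^\infty$ is a complete orthonormal system in $L^2(\Omega_s\times\Omega_t)$. Orthonormality is immediate from Fubini and the orthonormality of each factor basis. For completeness I would argue as follows: suppose $F\in L^2(\Omega_s\times\Omega_t)$ satisfies $\lra{\psi_i\phi_j,F}=0$ for all $i,j$. For fixed $j$, set $G_j(s):=\int_{\Omega_t}F(s,t)\phi_j(t)\,dt$, which lies in $L^2(\Omega_s)$; the hypothesis gives $\lra{\psi_i,G_j}=0$ for all $i$, so completeness of $\{\psi_i\}$ forces $G_j=0$ a.e. Thus for a.e.\ fixed $s$ the slice $F(s,\cdot)\in L^2(\Omega_t)$ has vanishing coefficients against every $\phi_j$, and completeness of $\{\phi_j\}$ gives $F(s,\cdot)=0$ a.e., whence $F=0$. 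This is the standard tensor-product completeness result, so I may alternatively simply cite it, e.g.\ \cite{baker}.

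With completeness in hand, Parseval's identity yields $\|H\|^2 = \sum_{i=1}^\infty\sum_{j=1}^\infty a_{ij}^2$, a convergent series of nonnegative terms by the assumed square integrability $\|H\|<\infty$. Therefore
\begin{align*}
(\sub{\Delta})^2 = \|H\|^2 - \|\sub{A}\|_F^2 = \sum_{i=1}^\infty\sum_{j=1}^\infty a_{ij}^2 - \sum_{i=1}^n\sum_{j=1}^n a_{ij}^2,
\end{align*}
which is exactly the tail remaining after the $n\times n$ block is subtracted. Since the square blocks $\{1,\dots,n\}\times\{1,\dots,n\}$ exhaust $\N\times\N$ as $n\to\infty$, the partial sums of a nonnegative series converge to the full sum, and the difference tends to $0$, giving $\lim_{n\to\infty}(\sub{\Delta})^2=0$.

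I expect the main obstacle to be the completeness of the product system. Orthonormality, Parseval, and the tail-of-a-convergent-series conclusion are routine; the substance lies in passing from completeness of the two one-dimensional bases to completeness of their products, which requires the Fubini-type slicing argument above (or a citation to a standard reference). It is worth emphasizing that, consistent with the statement, no rate of decay of $\sub{\Delta}$ is obtained: the argument yields only that the tail vanishes, not how fast, which is why the proposition asserts convergence without a rate.
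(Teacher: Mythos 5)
Your proof is correct and follows essentially the same route as the paper: identify the entries $a_{ij}$ as Fourier coefficients of $H$ with respect to the product system $\{\psi_i(s)\phi_j(t)\}$, apply Parseval to write $\|H\|^2=\sum_{i,j}a_{ij}^2$, and conclude that $(\sub{\Delta})^2$ is the tail of a convergent nonnegative series, hence vanishes. The only difference is that you supply a Fubini-type slicing proof of tensor-product completeness, which the paper simply asserts as a known fact.
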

\begin{proof}
$H(s,t)$ is defined on $\Omega_s\times\Omega_t$, i.e. $H\in L^2(\Omega_s\times\Omega_t)$.  Let $\{\phi_j(t)\}$ be an ONB for $L^2(\Omega_t)$ and $\{\psi_i(s)\}$ be an ONB for $L^2(\Omega_s)$. Then $\{\phi_j(t)\psi_i(s) \, \big| \, i,j\in\Z^+\}$ is an ONB  for $L^2(\Omega_s\times\Omega_t)$. Setting, 
 \begin{align*}
 H(s,t) &= \sum\limits_{i=1}^\infty\sum\limits_{j=1}^\infty a_{ij}\phi_j(t)\psi_i(s)  \quad \text{yields}\\ 
 \lra{\lra{H,\psi_i},\phi_j} &=  \dis{\int_{\Omega_t}\int_{\Omega_s} \psi_i(s)H(s,t)\phi_j(t) \, ds \, dt }= a_{ij}.
 \end{align*} 
Immediately, $\norm{H(s,t)}^2=\sum\limits_{i=1}^\infty\sum\limits_{j=1}^\infty\abs{a_{ij}}^2$, and defining $\sub{A}=[a_{ij}]_{i,j=1}^n$ yields 
 \begin{align*}\left(\sub{\Delta}\right)^2 =\|H\|^2 -\|\sub{A}\|_F^2&= \sum\limits_{i=1}^\infty\sum\limits_{j=1}^\infty\abs{a_{ij}}^2 - \sum\limits_{i=1}^n\sum\limits_{j=1}^n\abs{a_{ij}}^2 =  \sum\limits_{\max(i,j)>n} \abs{a_{ij}}^2. 
 \end{align*} 
But now by square integrability $\norm{H}^2<\infty$, so the expression is a convergent series. Thus, by Cauchy's criterion the tail end of the sum converges to zero with $n$.
\end{proof}
Practically we suppose the discrete system  in \eqref{discrete system} is constructed using different basis functions for each choice of $n$, and that $\sub{a_{ij}}$ are calculated using a quadrature rule. Then Proposition~\ref{prop1} may  not immediately apply due to quadrature error. But, for the theory,    we assume $\left(\sub{\Delta}\right)^2\xrightarrow{n \to \infty} 0$. We also assume that all continuous singular values are distinct,  as required for Theorem~\ref{SVE-SVD} statement~\ref{thm4}. Analogues of this result hold for the case without distinct singular values \cite{hansensvepaper}. 

In relating the numerical rank $\sub{p}$ across resolutions it is helpful to 
 establish a few basic results, the first two of which effectively appear in \cite{hansensvepaper}. 
\begin{lemma} \label{lemma1}
If $\left(\sub{\Delta}\right)^2\xrightarrow{n \to \infty} 0$, then $\lim\limits_{n\to\infty}{\sub{\sigma_i}}=\mu_i$ for all $i$.
\end{lemma}
\begin{proof}
By Theorem~\ref{SVE-SVD} statements~\ref{thm1} and \ref{thm2}, 
it is immediate that $\lim\limits_{n\to\infty}{\sub{\sigma_i}}=\mu_i$  for all $i$.
\end{proof}
\begin{lemma} \label{lemma3}
If $\left(\sub{\Delta}\right)^2\xrightarrow{n \to \infty} 0$  then
$\lim\limits_{n\to\infty} {\sub{\beta_i}} =\lra{u_i, g} = \hat{g}_i$  for all $i$.
\end{lemma}
\begin{proof}
By Theorem~\ref{SVE-SVD} statement~\ref{thm4}, %
$\lim\limits_{n\to\infty}{\sub{\tilde{u}_i}}=u_i$.   Using \eqref{betacoeff} and recalling  $\sub{\beta_i} =(\sub{\bo u_i})^T\bo b$, yields  $\sub{\beta_i}=\lra{\sub{\tilde{u}_i}, g}$.  Thus  $ \lim\limits_{n\to\infty} {\sub{\beta_i}} = \lim\limits_{n\to\infty}\lra{\sub{\tilde{u}_i}, g}=\lra{u_i, g} = \hat{g}_i$.
\end{proof}
\begin{lemma} \label{lemma4}
For all $i$ and $n,$ $\sub{\sigma_i} \le \mu_i \le {\sub{\Delta}} + {\sub{\sigma_{i-1}}}$ and $\mu_{i+1}-{\sub{\Delta}} \le {\sub{\sigma_i}} \le \mu_i$.
\end{lemma}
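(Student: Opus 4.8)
The plan is to derive all four inequalities from a single ingredient, namely Theorem~\ref{SVE-SVD} statement~\ref{thm2}, which asserts $0 \le \mu_i - \sub{\sigma_i} \le \sub{\Delta}$ for every index and every $n$, combined with the two monotone orderings $\mu_1 \ge \mu_2 \ge \cdots$ and $\sub{\sigma_1} \ge \cdots \ge \sub{\sigma_n}$ that hold by construction. No new analytic estimate is needed; the content is entirely a bookkeeping of these known bounds shifted across neighbouring indices.

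First I would dispose of the two upper bounds $\sub{\sigma_i} \le \mu_i$ appearing in both displayed chains: these are just the left half $0 \le \mu_i - \sub{\sigma_i}$ of statement~\ref{thm2} (equivalently statement~\ref{thm1}). For the bound $\mu_i \le \sub{\Delta} + \sub{\sigma_{i-1}}$, I would apply statement~\ref{thm2} at index $i-1$ to get $\mu_{i-1} \le \sub{\Delta} + \sub{\sigma_{i-1}}$, and then use $\mu_i \le \mu_{i-1}$ from the ordering of the $\mu$'s, giving $\mu_i \le \mu_{i-1} \le \sub{\Delta} + \sub{\sigma_{i-1}}$. For the remaining lower bound $\mu_{i+1} - \sub{\Delta} \le \sub{\sigma_i}$, I would avoid using $\sub{\sigma_{i+1}}$ (which need not exist when $i = n$) and instead rewrite the right half $\mu_i - \sub{\Delta} \le \sub{\sigma_i}$ of statement~\ref{thm2} at index $i$, then prepend $\mu_{i+1} \le \mu_i$ to obtain $\mu_{i+1} - \sub{\Delta} \le \mu_i - \sub{\Delta} \le \sub{\sigma_i}$.

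The one place that needs care, and the closest thing to an obstacle, is the index boundary: the bound $\mu_i \le \sub{\Delta} + \sub{\sigma_{i-1}}$ references $\sub{\sigma_{i-1}}$ and is therefore meaningful only for $i \ge 2$, so I would either state that half of the lemma for $i \ge 2$ or adopt the convention $\sub{\sigma_0} = +\infty$, under which $i=1$ holds vacuously. The lower bound $\mu_{i+1} - \sub{\Delta} \le \sub{\sigma_i}$, by contrast, is valid for all $i = 1,\dots,n$ precisely because the rewriting above leans on $\mu_{i+1} \le \mu_i$ rather than on $\sub{\sigma_{i+1}}$, so the top index $i=n$ causes no difficulty. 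With these conventions fixed, each inequality is a two-line chain and the proof is complete.
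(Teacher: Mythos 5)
Your proof is correct and is essentially the paper's own argument: both rest entirely on Theorem~\ref{SVE-SVD} statements~\ref{thm1} and~\ref{thm2} plus a monotone ordering of singular values, the only cosmetic difference being that you shift indices using $\mu_i \le \mu_{i-1}$ (and $\mu_{i+1}\le\mu_i$) where the paper uses $\sub{\sigma_i} \le \sub{\sigma_{i-1}}$ and then reindexes. Your explicit handling of the index boundaries (restricting to $i\ge 2$ for the bound involving $\sub{\sigma_{i-1}}$, and deliberately avoiding $\sub{\sigma_{i+1}}$ so that $i=n$ causes no trouble) is a bit more careful than the paper's ``reindexing'' step, but it does not constitute a different method.
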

\begin{proof}
From Theorem~\ref{SVE-SVD} statement~\ref{thm2},  and using the ordering of  $\{\sub{\sigma_i}\}_{i=1}^n$,  $\mu_i-{\sub{\sigma_i}}\le {\sub{\Delta}}$,   implies $\mu_i \le {\sub{\Delta}} + {\sub{\sigma_i}} \le {\sub{\Delta}} + {\sub{\sigma_{i-1}}}$. 
By Theorem~\ref{SVE-SVD} statement~\ref{thm1}  this gives  $\sub{\sigma_i} \le \mu_i \le {\sub{\Delta}} + {\sub{\sigma_{i-1}}}$.  Reindexing and subtracting $\sub{\Delta}$ also provides $\mu_{i+1}-{\sub{\Delta}} \le {\sub{\sigma_i}} \le \mu_i$.
\end{proof}
To relate the convergence between continuous and discrete spectra, we introduce  the parameter $\epsilon$ which arises in the Definition \ref{numericalrank} of the numerical rank and   depends  on  the machine precision.  
\begin{theorem}[Numerical Rank] \label{lemma2}
Let us assume that $\left(\sub{\Delta}\right)^2\xrightarrow{n \to \infty} 0$ and that all continuous singular values $\mu_i$ are distinct. Let $P\in\Z^+ \st \mu_P>\epsilon$ and $\mu_{P+1}\le \epsilon$, for small positive $\epsilon$, then $\lim\limits_{n\to\infty} {\sub{p}} = P =: p^*$.  Moreover, there exists $n^*\in\Z^+\st {\sub{p}}=p^* $ for all $ n\ge n^*$.
\end{theorem}
\begin{proof}
First note that because $\norm{H}^2 = \sum\limits_{i=1}^\infty\mu_i^2<\infty$,  $\mu_i\to0$,   and $P$   exists.  From Theorem~\ref{SVE-SVD} statement~\ref{thm1},  $\sub{\sigma_{p^*}}\le{\sigma^{(n+1)}_{p^*}}\le\mu_{p^*}$.  Additionally, for all $ i>p^*, \, {\sub{\sigma_i}}\le\mu_{p^*+1}<\epsilon$.  Thus $\sub{p}\le p^*$. %
 From Lemma~\ref{lemma4}, $\epsilon <\mu_{p^*} \le {\sub{\Delta}}+ \sub{\sigma_{p^*-1}} \le {\sub{\Delta}}+ \sub{\sigma_{i}}$, for any $i<p^*-1$.  Thus as $\sub{\Delta}\to0$,  $\sub{\sigma_i}> \epsilon$, for  all $i\le p^*-1$, yielding $\sub{p}\ge p^*-1$, i.e. $p^*-1\le  \sub{p}\le p^*$. Suppose $\sub{p} = p^*-1$, then   $\sub{\sigma_{p^*}}<\epsilon <\sub{\sigma_{p^*-1}}$. But by Theorem~\ref{SVE-SVD} statement~\ref{thm1},  $0\le \mu_{p^*} -\sigma_{p^*} \le \sub{\Delta}$, and  as $\sub{\Delta}\to0$, $\sigma_{p^*}\to\mu_{p^*}>\epsilon$. Hence  $\lim\limits_{n\to\infty} {\sub{p}} = p^*$, and there exists $ n^*\in\Z^+\st {\sub{p}}=p^*$ for all $n\ge n^*$.  
\end{proof}

\subsection{Convergence of $\sub{\lambda}$}\label{sec:lambdaconv}
We define  regularization parameter $\sub{\lambda}$ to be the estimate of the regularization parameter with resolution $n$, and $\lambda^*$ to be the estimate for regularizing the continuous solution \eqref{contsoln}. Now, to assist in the analysis we introduce some notation for functionals that occur repeatedly in the formulation. Specifically, the  regularization functionals \eqref{TMDP}-\eqref{Tgcv}  are expressible in terms of the common multivariable function 
\begin{align*}
\eta(\lambda, p, k, \bo a, \bo z) &= \sum_{i=1}^{p} z^2_i \paren{\frac{\lambda^2}{a_i^2+\lambda^2}}^k =\sum_{i=1}^{p}{z^2_i} \paren{1-{q}(\lambda, a_i)}^k\\
&=\left({\bo z^2}\right)^T ({\bfone} -  {\bo w} (\lambda,  {\bo a}))^k \end{align*}
where we have defined the  vectors  $\bo z $, $\bfone$, $\bo w$ and $\bo a$  $\in \mathcal{R}^p$,  with  $w_i =  q(\lambda, a_i)$ and $\bfone_i$ $=1$.  With the appropriate identification of the terms in $\eta$ we obtain, 
\begin{align*}
\sub{D_{\text{T}}}(\lambda) &=\eta(\lambda,\sub{p}, 2, \sub{\boldsymbol{\sigma}}(1:\sub{p}), \sub{\boldsymbol{\beta}}(1:\sub{p}))  \\ 
\sub{C_{\text{T}}}(\lambda)&=  \eta(\lambda,\sub{p}, 1, \sub{\boldsymbol{\sigma}}(1:\sub{p}), \sub{\boldsymbol{\beta}}(1:\sub{p})) \\ 
\sub{U_{\text{T}}}(\lambda)&=\eta(\lambda,\sub{p}, 2, \sub{\boldsymbol{\sigma}}(1:\sub{p}), \sub{\boldsymbol{\beta}}(1:\sub{p})) + 2 \zeta^2   \bfone^T\bo w(\lambda, \sub{\boldsymbol{\sigma}}(1:\sub{p}))\\
\sub{G_{\text{T}}}(\lambda)&=  \frac{ n^2 \left(\eta(\lambda,\sub{p}, 2, \sub{\boldsymbol{\sigma}}(1:\sub{p}), \sub{\boldsymbol{\beta}}(1:\sub{p}))  +\|\boldsymbol{\beta}(\sub{p}+1:n)\|_2^2\right)}{\left(n-\sub{p}+ \bfone^T\bo w(\lambda, \sub{\boldsymbol{\sigma}}(1:\sub{p})) \right)^2}. \end{align*}
Equivalent continuous functionals  are obtained for  effective continuous rank $p^*$ as in Theorem~\ref{lemma2},  by  defining $z_i=\hat{g}_i$, and $a_i=\mu_i$. For example,   the limiting GCV  is
\begin{align}
\label{contgcv}
G^*(\lambda)&=\lim_{n \to \infty}\left\{ \frac{n^2 \left(\eta(\lambda,p^*,2,\bfmu(1:p^*),   \hat{\bo g}(1:p^*) )  +\|\boldsymbol{\beta}({p}^*+1:n)\|_2^2\right)}{\left(n-p^*+    \bfone^T\bo w(\lambda, \bfmu(1:p^*))\right)^2} \right\}.
\end{align}

To relate the continuous and discrete functionals,   note immediately the continuity of $\bo w$ with respect to $\bo a$ and $\lambda$,  and hence of $\eta$ with respect to $\bo a$, $\bo z$ and $\lambda$.  Moreover, using  Lemma~\ref{lemma1} and Theorem~\ref{SVE-SVD} statement~\ref{thm1} for $\mu_i$, and  Lemma~\ref{lemma3} for $\hat{g}_i$,   we introduce 
\begin{align}\label{expsigma}
\subs{\sigma}&=\mu_i+{\subs{\epsilon}}, \quad \subs{\epsilon}<0, \quad \lim\limits_{n\to\infty}{\subs{\epsilon}}=0\\ \label{expbeta}
 \sub{\beta_i}&=\hat{g}_i+{\subs{\delta}}, \quad \lim\limits_{n\to\infty}{\subs{\delta}}=0.
 \end{align}
 \begin{lemma}[Convergence of $\eta$]\label{lemmaetaconv}
 Suppose $n>n^*$ such that Theorem~\ref{lemma2} holds,  and in  \eqref{expsigma} and \eqref{expbeta} $|\subs{\epsilon}|< \subs{\sigma}$, and $|\subs{\delta}|< |\hat{g}_i|$, respectively. Then, 
  \begin{align}
 \nonumber 
\lim_{n\to\infty} \eta(\lambda,{\sub{p}}, k, \sub{\boldsymbol{\sigma}}(1:\sub{p}), \sub{\boldsymbol{\beta}}(1:\sub{p})) &=   \eta(\lambda,p^*,k, \bfmu(1:p^*),   \hat{\bo g}(1:p^*)) \\
\lim_{n \to \infty}   \bfone^T\bo w(\lambda, \sub{\boldsymbol{\sigma}}(1:\sub{p})) &=\bfone^T \bo w(\lambda, \bfmu(1:p^*))\nonumber\\
 \lim_{n\to\infty} \frac{n^2}{\left(n-\sub{p}+    \bfone^T\bo w(1:\sub{p})\right)^2} &=1, \quad p^*<< n. \nonumber
 \end{align}
 \end{lemma}
 \begin{proof} The proof is immediate by the continuity of $\eta$  and $\bo w$. 
 \end{proof}
 
 For ease of notation we introduce $\bar{\bo z}$ to be vector $\bo z$ truncated to length ${p}^*$.
 \begin{theorem}[Convergence of $\sub{\lambda}$]\label{regconv}
  Suppose $n>n^*$ such that Theorem~\ref{lemma2} holds,  and in  \eqref{expsigma} and \eqref{expbeta} $|\subs{\epsilon}|< \subs{\sigma}$, and $|\subs{\delta}|< |\hat{g}_i|$, respectively. Assume $\sub{\lambda}$ and $\lambda^*$ are given by one of the following cases: 
 \begin{enumerate}
 \item $\sub{\lambda}$ solves  $\sub{D_{\text{T}}}(\lambda) = \zeta^2 \tau$ and $\lambda^*$ solves $D^*(\lambda)  =\eta(\lambda,p^*,2,  \bar{\bfmu}, \bar{\hat{\bo g}}) = \zeta^2 \tau$. 
 \item $\sub{\lambda}$ solves  $\sub{C_{\text{T}}}(\lambda) = \zeta^2 \sub{p}$ and $\lambda^*$ solves $C^*(\lambda)  =\eta(\lambda,p^*,1,  \bar{\bfmu}, \bar{\hat{\bo g}}) = \zeta^2 p^*$.  
 \item $\sub{\lambda} = \argmin_{\lambda} \sub{U_{\text{T}}}(\lambda)$ and $\lambda^* = \argmin_{\lambda}U^*(\lambda)  =\argmin_{\lambda} \{\eta(\lambda,p^*,2,  \bar{\bfmu}, \bar{\hat{\bo g}}) +2 \zeta^2   \bfone^T\bo w(\lambda, \bar{\bfmu}) \}$.  
 \item $\sub{\lambda} = \argmin_{\lambda} \sub{G_{\text{T}}}(\lambda)$ and $\lambda^* = \argmin_{\lambda}G^*(\lambda)$, $G^*(\lambda)$ as defined in \eqref{contgcv}. 
  \end{enumerate}
 Then, in each case,  $\lim\limits_{n\to\infty}{\sub{\lambda}}=\lambda^*$.
 \end{theorem}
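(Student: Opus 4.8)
The plan is to reduce all four items to two generic convergence mechanisms built on Lemma~\ref{lemmaetaconv}. By Theorem~\ref{lemma2} there is an $N^*$ beyond which the rank is frozen, $\sub{p}=p^*$, so for every $n\ge N^*$ each discrete functional is a sum of exactly $p^*$ smooth bounded functions of $\lambda$ whose coefficients satisfy $\sub{\sigma_i}\to\mu_i$ and $\sub{\beta_i}\to\hat{g}_i$ via \eqref{expsigma}--\eqref{expbeta}. The hypotheses $|\subs{\epsilon}|<\subs{\sigma}$ and $|\subs{\delta}|<|\hat{g}_i|$ keep every $\sub{\sigma_i}$ bounded away from $0$ and every $\sub{\beta_i}$ of fixed sign, so each map $\lambda\mapsto\lambda^2/((\sub{\sigma_i})^2+\lambda^2)$ is uniformly continuous in $\lambda$ with converging coefficients. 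Consequently the pointwise convergences $\sub{D_{\text{T}}}\to D^*$, $\sub{C_{\text{T}}}\to C^*$, $\sub{U_{\text{T}}}\to U^*$, $\sub{G_{\text{T}}}\to G^*$ supplied by Lemma~\ref{lemmaetaconv} are in fact locally uniform in $\lambda$. It remains to convert this into convergence of roots (items (1)--(2)) and of minimisers (items (3)--(4)).

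For the root-finding items I would first note that $\eta(\lambda,p^*,1,\cdot,\cdot)$ and $\eta(\lambda,p^*,2,\cdot,\cdot)$ are strictly increasing in $\lambda$, since each summand $z_i^2(\lambda^2/(a_i^2+\lambda^2))^k$ rises monotonically from $0$ to $z_i^2$; thus $D^*$ and $C^*$ are strictly increasing and, the target levels being attained by hypothesis, have unique roots $\lambda^*$. For item (1) fix $\varepsilon>0$: strict monotonicity gives $D^*(\lambda^*-\varepsilon)<\zeta^2\tau<D^*(\lambda^*+\varepsilon)$, and pointwise convergence at the two points $\lambda^*\pm\varepsilon$ forces $\sub{D_{\text{T}}}(\lambda^*-\varepsilon)<\zeta^2\tau<\sub{D_{\text{T}}}(\lambda^*+\varepsilon)$ for all large $n$; by monotonicity and continuity of $\sub{D_{\text{T}}}$ its root $\sub{\lambda}$ then lies in $(\lambda^*-\varepsilon,\lambda^*+\varepsilon)$, whence $\sub{\lambda}\to\lambda^*$. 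Item (2) is identical, except that one first uses Theorem~\ref{lemma2} to replace the moving target $\zeta^2\sub{p}$ by the fixed level $\zeta^2 p^*$ valid for all $n\ge N^*$.

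For the minimisation items the argument is genuinely more delicate, since pointwise (even locally uniform) convergence of functionals does not by itself transfer to their minimisers. I would pass to a compact domain, reparametrising through $\lambda\mapsto\lambda^2/(1+\lambda^2)\in[0,1]$ (equivalently working on $[0,\infty]$), on which $U^*$ and $G^*$ extend continuously because the filter factors have finite limits at both ends. On this compact set the local uniform convergence becomes uniform, and the standard variational fact then applies: if the continuous limit has a \emph{unique} minimiser $\lambda^*$, any accumulation point of $\sub{\lambda}$ minimises the limit and hence equals $\lambda^*$, and compactness upgrades this to $\sub{\lambda}\to\lambda^*$. For UPRE this completes the proof once uniqueness of $\argmin_{\lambda}U^*$ is secured. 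For GCV, Lemma~\ref{lemmaetaconv} additionally supplies the prefactor limit $n^2/(n-p^*+\bfone^T\bo w)^2\to1$, so that in the limit only the bracketed numerator matters.

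The main obstacle I anticipate is the GCV tail $\|\bo z(p^*+1:n)\|^2=\sum_{i=p^*+1}^{n}(\sub{\beta_i})^2$ hidden inside \eqref{contgcv}. Finiteness of $G^*$ and the validity of $\sub{G_{\text{T}}}\to G^*$ require $\sum_{i=p^*+1}^{n}(\sub{\beta_i})^2\to\sum_{i>p^*}\hat{g}_i^2$, which does \emph{not} follow from the termwise limits $\sub{\beta_i}\to\hat{g}_i$; it needs uniform control of the tails, furnished by square summability $\sum_i\hat{g}_i^2=\|g\|^2<\infty$ together with a dominated-convergence argument for the series. A secondary obstacle, shared by items (3) and (4), is verifying that the limiting minimiser is unique and interior, so that the discrete minimisers cannot drift to $\lambda=0$ or $\lambda=\infty$; I would establish this by inspecting the endpoint values of $U^*$ and $G^*$ and the sign of the derivative near the optimum rather than assuming uniqueness outright.
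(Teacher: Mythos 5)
Your proposal is correct and rests on the same foundation as the paper's proof --- everything is funneled through Lemma~\ref{lemmaetaconv} --- but the resemblance ends there, because the paper's entire proof is two sentences: it asserts the result follows ``immediately'' from that lemma for MDP, ADP and UPRE, and for GCV adds only that $\lim_{i\to\infty}(\sub{\beta_i})^2=0$, so that $\|\bo z(p^*+1:n)\|_2^2$ has a bounded, $\lambda$-independent limit. All of the analytic content you supply is absent from the paper: the strict monotonicity of $\eta$ in $\lambda$ and the two-point trapping argument that converts locally uniform convergence of $\sub{D_{\text{T}}}$, $\sub{C_{\text{T}}}$ into convergence of their roots (together with the observation that the moving target $\zeta^2\sub{p}$ equals $\zeta^2 p^*$ once $n\ge N^*$); the compactification of the $\lambda$-axis and the accumulation-point argument needed to pass from convergence of $\sub{U_{\text{T}}}$, $\sub{G_{\text{T}}}$ to convergence of their minimisers; and the recognition that the GCV tail $\sum_{i=p^*+1}^{n}(\sub{\beta_i})^2$ does not converge merely because each $\sub{\beta_i}\to\hat{g}_i$. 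Your two ``anticipated obstacles'' are in fact genuine gaps in the published argument, so your write-up is strictly more rigorous than the original; what the paper's brevity buys is only the (unstated) presumption that these steps are routine.

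Two refinements would tighten your proposal. First, the GCV tail does not need a dominated-convergence argument for the series: since $\sub{U}$ is orthogonal, $\sum_{i=1}^{n}(\sub{\beta_i})^2=\|\sub{\bo b}\|_2^2=\sum_{i=1}^{n}\lra{\psi_i,g}^2\to\|g\|^2$ by Parseval and completeness of the basis, while the head $\sum_{i=1}^{p^*}(\sub{\beta_i})^2\to\sum_{i=1}^{p^*}\hat{g}_i^2$ is a finite sum of convergent terms; subtracting gives convergence of the tail. This is exactly the identity the paper records after \eqref{Tgcv} for evaluating the GCV without a full SVD. Second, uniqueness of $\argmin_\lambda U^*$ and $\argmin_\lambda G^*$ cannot in general be ``established'' by inspecting endpoint values and derivative signs --- these functionals can possess several global minimisers --- so it is cleaner to treat uniqueness as an implicit hypothesis of the theorem (it is built into the notation $\lambda^*=\argmin$), after which your compactness argument closes items (3) and (4) exactly as you describe.
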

 \begin{proof}
 The result follows by Lemma~\ref{lemmaetaconv} immediately for the MDP, ADP and UPRE functionals.  For the GCV we note in addition  that  $\lim_{i \to \infty} (\sub{\beta_i})^2=0$. Therefore $\lim_{n\to \infty} \|\boldsymbol{\beta}(p^*+1:n)\|_2^2$ is bounded and independent of $\lambda$.  
 \end{proof}

\section{Practical Implementation}\label{sec:practical}
Our interest, as noted, is the solution of the integral equation \eqref{eq:fred} rather than the generation of an approximation to the SVE for the kernel $H(s,t)$. We carefully describe the stages of the algorithm which lead to the determination of the solution of the large scale problem, using the regularization parameter estimated using only the coarse resolution system of equations.  In the following we generally assume that  the data $g(s)$ is provided at a discrete set of points, $\{\subl{s_\imath}\}_{\imath=1}^N $, cf. \cite{hansensvepaper}, and is contaminated by noise vector $\bo e$, where $\bo e \sim \Nn(0,  C)$ for diagonal matrix $C=\mathrm{diag}(\zeta_1^2, \zeta_2^2, \dots, \zeta_N^2)$. 
We also assume that the approximation of $f(t)$ is required at $N$ points $\{\subl{t_\jmath}\}_{\jmath=1}^N $.
\subsection{The downsampled system}\label{sec:downsample}
To apply the algorithm with respect to different resolutions we have to first identify  the  sampling, i.e. we  pick the coarse level resolution $n<N$ such that it is possible to find a sampling $\{\sub{s_i}\}_{i=1}^n $,  with $\sub{s_i}=\subl{s_\imath}$ for some $\imath \in \iota$ for index set $\iota$.  For example if  $n=N/2$ then we may take every second sampled point at the fine resolution for the downsampled data so that $\iota=\{1, 3, \dots, N-1\}$. The samples are ordered $\subl{s_1}\le \sub{s_1}< \sub{s_2} \dots <\sub{s_n}\le \subl{s_N}$ and yield the sampling vector with entries $\subs{g} = g(\subs{s})$. 

We now use the   indicator  functions, normalized to length $1$,   given by
\begin{align}\label{indicator}
\chi_k(x) = \left\{\begin{array}{ll} \frac{1}{\sqrt{ dx_k}}  & ~~~x \in \Omega_k =[x_k-\frac{dx_k}{2}, x_{k}+\frac{dx_k}{2}]\\ 0 &~~~\textrm{otherwise}\end{array} \right. .
\end{align}
Then,  defining  step size $\sub{d s_i}=\sub{s_{i+1}}-\sub{s_i}$ with the equivalent definitions in $t$, and such that the sample points are at the mid points of each non-overlapping interval, 
\begin{align}\label{ONB}
\sub{\psi_i}(s)    =  \left\{\begin{array}{ll} \frac{1}{\sqrt{\sub{d s_i}}}  &  s \in \Omega_{\sub{s_i}} =[\sub{s_i}-\frac{\sub{d s_i}}{2}, \sub{s_i}+\frac{\sub{d s_i}}{2}]\\ 0 & \textrm{otherwise}\end{array} \right..  
\end{align}
Thus, in \eqref{discrete system}, with the assumption that the integral over $\Omega_{\sub{s_i}}$ uses the mid point rule, we obtain the approximation
\begin{align}\subs{\hat{g}} = \langle{g(s), \psi_i(s)}\rangle \approx g(s^{(n)}_i) \sqrt{\sub{d s_i}}=:\sub{b_i}. \label{coeffgnonoise}\end{align}
The function $f(t)$ is defined similarly, for indicator basis functions $\sub{\phi_j}(t)$,  as in \eqref{indicator}, so that 
\begin{align} \subj{\hat{f}} &= \langle{f(t), \phi_j(t)}\rangle \approx f(t^{(n)}_j) \sqrt{\sub{d t_j}}=:\sub{x_j}, \nonumber \\ 
  \text{where}  \quad \sub{\phi_j}(t)   &=  \left\{\begin{array}{ll} \frac{1}{\sqrt{\sub{d t_j}}}  &  t \in \Omega_{\sub{t_j}} =[\sub{t_j}-\frac{\sub{d t_j}}{2}, \sub{t_j}+\frac{\sub{d t_j}}{2}]\\ 0 &  \textrm{otherwise}\end{array} \right.. \label{ONB2}
 \end{align} 
Then, again assuming the mid point rule, the approximation to the kernel matrix is given by  
\begin{align}\nonumber
\int_{\Omega_{{s}}}\int_{\Omega_{{t}}}\sub{\psi_i}(s)  H(s,t) \sub{\phi_j}(t)  dt ds&=\frac{1}{\sqrt{\sub{d s_i} \sub{d t_j}}}\int_{\Omega_{\sub{s_i}}}\int_{\Omega_{\sub{t_j}}}H(s,t) dt ds\\ &\approx  {\sqrt{\sub{d s_i} \sub{d t_j}}} H(\subs{s}, \sub{t_j}) =: \sub{a_{ij}}.
 \label{Anmatrix}\end{align}

For the resolution-based algorithm   $\subl{A}$  is required and can be also calculated using  \eqref{Anmatrix}. Thus $\sub{A}$ can be obtained by sampling and scaling  $\subl{A}$, i.e. by extracting rows and columns with the correct scaling, 
\begin{align}
\sub{a_{ij}}  =\sqrt{\sub{d s_i} \sub{d t_j}} H(\subs{s}, \sub{t_j}) 
  =\frac{\sqrt{\sub{d s_i} \sub{d t_j}}}{\sqrt{\subl{d s_\imath} \subl{d t_\jmath}}} \subl{a_{\imath\jmath}}. \label{scaleA}
\end{align}
We note that the impact of the quadrature error in the calculation of these elements, which tends to $0$ with $n$,  is ignored in the analysis. 

We have demonstrated, therefore, that the computational cost for determining the matrix for the coarse grain resolution $\sub{A}$ is negligible compared to the cost for determining the kernel matrix $\subl{A}$ which would be required independent of any coarse-fine resolution arguments.\footnote{If the kernel integral is calculated exactly over the given interval it is still possible to obtain $\sub{A}$ from $\subl{A}$ by summing the relevant terms from $\subl{A}$ but the scaling factor is the inverse of that in \eqref{scaleA}.} Further,  in obtaining the sampling $\{\sub{s_i}\} $, it is appropriate to define a sampling interval $\ell$ such that   $\imath = 1 : \ell:N$, yielding non-overlapping intervals. 
When the sampling is completely uniform, with all of $\subl{d s_\imath}$, $\subl{d t_\imath}$, $\sub{d s_i}$, and $\sub{d t_i}$, independent of index $\imath$ and $i$, $\sub{A} =\alpha \subl{A}$, where $\alpha$  depends only on the ratios between the number of points at each resolution.

Given matrices $\sub{A}$ and $\subl{A}$, the goal is now to determine the numerical rank $\subl{p}$ and regularization parameter $\subl{\lambda}$ only using the SVD of $\sub{A}$. Then, using the first $\subl{p}$ terms of the SVD for $\subl{A}$, calculated for example using for example \textrm{svds}$(\subl{A}, \subl{p})$ in Matlab, and thus not requiring the full SVD for $\subl{A}$,   approximations at the original fine resolution are given by
  \begin{align}\nonumber 
 \subl{f}(t_k) &\approx \sum_{\jmath=1}^N  \left(\sum_{\imath=1}^\subl{p} q(\subl{\lambda},\subl{\sigma_\imath})\frac{(\subl{\bo u_\imath})^T \subl{\bo b}}{\subl{\sigma_\imath}}  \subl{ v_{\jmath\imath}}\right)\phi_\jmath(t_k) \\
 &=\frac{1}{\sqrt{\subl{d t_k}}}\sum_{\imath=1}^\subl{p} q(\subl{\lambda},\subl{\sigma_\imath})\frac{(\subl{\bo u_\imath})^T \subl{\bo b}}{\subl{\sigma_\imath}}  \subl{ v_{k\imath}}=: \subl{\bo f_k}. \label{tsvdregsolnN}\end{align}

\subsection{Determination of the numerical rank}\label{sec:numericalrank}
Theorem~\ref{lemma2}  suggests that the numerical rank estimation  introduces an additional significant parameter $\epsilon$ for the determination of $\sub{p}$. First we note that this parameter is not the  machine precision for floating point arithmetic $\eps_{\mathrm{float}}$ e.g. $ 2.2204e-16$ in  Matlab 2014b. While  $\epsilon$ does depend on $\epsilon_{\mathrm{float}}$  it also depends on the numerical spectrum of $\subl{A}$ and is easily estimated using the singular values for $\sub{A}$. In particular, $\epsilon$ is only relevant in determining the effective numerical rank of the problem, so as to determine the truncation of the SVD. We illustrate this for the numerical examples in section~\ref{simulations}, demonstrating the convergence of the singular values and consequent estimation of $\sub{p}$.

\subsection{Determination of the regularization parameter}\label{sec:regparam}
It remains to more carefully consider the estimation of the regularization parameter given the provided data. Suppose, for now,   that the measured data has been whitened via weighting of the sample vector  yielding $\tilde{\bo g} =:\zeta W^{1/2}\bo g$, where $W$ is the inverse covariance matrix for the noise vector $\bo e$, and $\zeta^2$ is the mean of the variance in each measurement. Likewise, then, the kernel matrix $\subl{A}$ is replaced by $\tilde{\subl{A}}=\zeta W^{1/2} \subl{A}$. We maintain the parameter $\zeta$ in the analysis just to emphasize its usage in the formulae in section~\ref{sec:regest} for estimating the regularization parameter. The weighted noise vector $\tilde{\bo e} = \zeta W^{1/2} {\bo e}   \sim \Nn(0,  \zeta^2I)$ and the estimation formulae apply as given in \eqref{TMDP}, \eqref{TADP}, and  \eqref{Tupre},  noting again by Theorem~\ref{thm:weighted kernel} that the scaling does not impact the convergence of the weighted singular values and coefficients. 
But now, again applying \eqref{coeffgnonoise}, the actual right hand side data are obtained by inner products with the weighted data
\begin{align*}
(\sub{\hat{\tilde{g}}_{\mathrm{obs}}}) _i=\langle{\tilde{g}_{\mathrm{obs}}(s),  \psi_i(s)}\rangle &= \langle{\tilde{g}(s)+\tilde{e}(s), \psi_i(s)} \rangle=\subs{ \tilde{b}}+ \subs{\hat{\tilde{e}}} \quad \text{for which}\\
\subs{\hat{\tilde{e}}} &= \tilde{e}(s_i) \sqrt{\sub{ds}}, \quad\subs{ \hat{\tilde{e}}}  \sim \mathcal{N}(0, \left(\zeta^{(n)}\right)^2 ), \quad \left(\zeta^{(n)}\right)^2 =\sub{ds}\zeta^2.
\end{align*}
Hence the noise level now depends on $n$ and we cannot immediately apply the convergence results, Theorem~\ref{regconv}, for \eqref{TMDP}-\eqref{Tupre} because the variance of the noise in each system depends on  $n$.  We  recall again that the GCV estimator is independent of $\zeta^2$ and no further discussion is needed.

It is immediate  by \eqref{regform} that if we introduce scaling of the kernel matrix and the right hand side data by constant $\mu$ then
\begin{align*}
\bo{ x}_{\text{Reg}}(\tilde{\lambda},\mu)=  \argmin_{\bo x} \left\{{\mu^2}\|{ A}\bo{x}-\bo{ b}\|_2^2+\tilde{\lambda}^2\|{\bo{x}}\|_2^2\right\} 
=\bo{ x}_{\text{Reg}}({\lambda}), \quad \lambda=\frac{\tilde{\lambda}}{\mu}.
\end{align*}
On the other hand, suppose that we find the regularization parameter $\sub{\tilde{\lambda}}$ using variance $\left(\zeta^{(m)}\right)^2$ instead of $\left(\sub{\zeta}\right)^2$ then this corresponds to scaling the data by $\mu=\zeta^{(m)}/\zeta^{(n)}$ yielding a solution  with regularization parameter 
\begin{align}\label{scaling}
{\lambda^{(n)}}={\frac{\zeta^{(n)}}{\zeta^{(m)}}} \tilde{\lambda}^{(n)}.
\end{align} 
We note that this result also follows by considering the expansion for the solution \eqref{regsoln}, and by the uniqueness of the SVD  \cite{GoLo:96}, at least with respect to the singular values and of the  singular vectors up to their signs.
Thus to apply the convergence results for \eqref{TMDP}-\eqref{Tupre} we calculate $\sub{\tilde{\lambda}}$ using $\left(\zeta^{(n)}\right)^2$ yielding $\subl{\lambda}=\left(\zeta^{(N)}/\zeta^{(n)} \right)\subl{\tilde{\lambda}}=\left(\zeta^{(N)}/\zeta^{(n)} \right)\sub{\tilde{\lambda}}$ by the convergence of the funcationals for constant variance. Then we may   use $\subl{\lambda}$, $\subl{\sigma_i}$ and $\subl{\beta_i}$ to find the solution using \eqref{tsvdregsolnN}. 
 For clarification the steps are described in Algorithm~\ref{alg}.

 \begin{algorithm}[H]\caption{Galerkin Method  to obtain regularized solution of  \eqref{eq:fred} \label{alg}}
\begin{algorithmic}[1]  
\Require  whitened   data $\{g(s_\imath)\}_{\imath=1}^N$, whitened kernel function $H(s,t)$ and  precision $\epsilon$. 
 \State Pick the coarse level resolution $n<N$ such that it is possible to find a sampling $\{g(\sub{s_i}\}_{i=1}^n)$,  with $\sub{s_i}=\subl{s_\imath}$, $\imath \in \iota$, for some index set $\iota$, with ordering $\subl{s_1}\le \sub{s_1}< \sub{s_2} \dots <\sub{s_n}\le \subl{s_N}$. 
 \State Choose ONB  $\{\sub{\phi_j}(t)\}_{j=1}^n$,  $\{\sub{\psi_i}(s)\}_{i=1}^n$,  $\{\subl{\phi_\jmath}(t)\}_{\jmath=1}^N$ and  $\{\subl{\psi_\imath}(s)\}_{\imath=1}^N$, e.g. using \eqref{ONB} and \eqref{ONB2}.
 \State  Calculate the right hand side vector $\sub{\bo b}$ using \eqref{coeffgnonoise}.
 \State   Calculate    $\subl{A}$ with entries $(\subl{a_{ij}})$ using \eqref{Anmatrix}. 
 \State  Calculate matrix $\sub{A}$ by sampling from $\subl{A}$ using \eqref{scaleA}.
 \State Compute  SVD, $\sub{A} =\sub{U} \sub{\Sigma} (\sub{V})^T$.    Estimate $\sub{p}$ using  $\epsilon$. 
 \State    Compute $\subl{p}=\sub{p}$ dominant terms for SVD of $\subl{A}$. 
 \State  Find $\sub{\tilde{\lambda}}$ using regularization parameter estimation by one of MDP, ADP, UPRE or GCV, using $\left(\zeta^{(n)}\right)^2$.
\State Relate $\sub{\tilde{\lambda}}$ to $\subl{\lambda}$, via $\subl{\lambda}=\sub{\tilde{\lambda}}\sqrt{\zeta^2 \sub{d s}}/\zeta^{(n)} = \sub{\tilde{\lambda}}\sqrt{\subl{d s}/\sub{ds}}$
\Ensure   the regularized  solution   \eqref{tsvdregsolnN} for resolution $N$ using $\subl{\boldsymbol{\beta}}$ and $\subl{\boldsymbol{\sigma}}$.
\end{algorithmic}
\end{algorithm}


Some advantages of the coarse to fine resolution argument are apparent. For example, suppose that   $\subl{{\lambda}}$ is found directly  by the UPRE. When $N$ is large, $\sub{ds}$ is small,  the noise in the coefficients goes to zero, forcing $\zeta^2=0$,  so that in the minimization the residual function dominates the filter terms.  For the ADP and MDP the right hand side also tends to zero with $\sub{ds} \to 0$, forcing the filter terms identically to $1$, i.e. to no filtering, which is consistent with the noise in the expansion coefficients going to $0$.  Noise due to the data sampling is effectively ignored at the high resolution, but is accounted for at the lower resolution.

\section{Experimental Validation of the Theoretical Results}\label{simulations}
We illustrate the theoretical discussions in section~\ref{theoryresults} with problem   \textit{gravity}   from the Regularization toolbox \cite{Regtools}, for which 
\begin{align}
H(s,t) &= \frac{d}{\paren{d^2+(s-t)^2}^{3/2}},\quad (s,t) \in [0,1] \times [0,1], \quad d>0  \label{eq:gravK}\\
f(t) &= \sin\paren{\pi t}+.5\sin\paren{2\pi t}.\label{eq:gravf}
\end{align}
This problem  has the advantage that we can explicitly determine $(\Delta^{(n)})^2$   and  the parameter dependence due to $d$ introduces problems of  ill-posedness increasing with $d$. 
\subsection{Illustration of the SVE-SVD relation}
To investigate the convergence of $(\Delta^{(n)})^2 \rightarrow 0$ we use 
\begin{align*}
\norm{H}_2^2 = \int_0^1\int_0^1  \frac{d^2}{\paren{d^2+(s-t)^2}^3} \,dt\,ds = \frac{3\arctan\paren{\frac{1}{d}}+\frac{d}{d^2+1}}{4d^3}.
\end{align*}
Thus $H$ is square integrable.  For $d=.25$, $\norm{H}_2^2 \approx 67.404$ and for $d=.5$, $\norm{H}_2^2 \approx 7.443$. Moreover, $\sub{a_{ij}}$ in \eqref{kernel matrix} is also available exactly. For $s_{i+1}-s_i=ds=dt =t_{j+1}-t_j$,  
\begin{align*}
\sub{a_{ij}}&=\frac{1}{\sqrt{ds dt}}\int_{s_i}^{s_{i+1}}\int_{t_j}^{t_{j+1}}   \frac{d}{\paren{d^2+(s-t)^2}^{3/2}} \,dt\,ds  \\
&=\frac{1}{dt} \frac{\left( \sqrt{(s_{i+1}-t_j)^2+d^2} + \sqrt{(s_{i}-t_{j+1})^2+d^2} -2 \sqrt{(s_i-t_j)^2+d^2}\right)}{d}.
\end{align*}

In Figure~\ref{fig:delta} we show the convergence of $\lvert \paren{{\Delta^{(n)}}}^2 \rvert$ with $n$ for problem sizes $n=100, \dots, 1000$, with both $d=0.25$ and $d=0.5$. It is worth noting that calculation of this estimate with the midpoint rule generates convergence of the estimate, but due to quadrature error the numerical calculation of $(\sub{\Delta})^2$ using $\|H\|^2 - \|\sub{A}\|_F^2$, is negative.  $\|\sub{A}\|_F^2$ still converges to $\|H\|^2$, but from above rather than from below $\|H\|^2$. For the exact calculation, shown in Figure~\ref{fig:deltaexact}, $\|\sub{A}\|_F^2$ converges  from above as given by the theory. The convergence of the singular values, leading to the effective numerical rank, Theorem~\ref{lemma2}, is illustrated in Figures~\ref{fig:pstara}-\ref{fig:pstarb}. The vertical line indicates the rank calculation for $\epsilon=10^{-15}$. Here, consistent with the simulations $\sub{A}$ is  calculated using the midpoint quadrature rule. The singular values decay exponentially up to the numerical precision of the Matlab implementation, $\epsilon=10^{-16}$. For problem size $n=50$ and $d=0.25$, numerical precision does not impact the singular values. We note that in these, and subsequent figures, the markers and colors are consistently determined by resolution $n$. 

\begin{figure}[H] 
\begin{centering}
\subfigure[Quadrature $\lvert \paren{{\Delta^{(n)}}}^2 \rvert$ \label{fig:delta}]{\includegraphics[width= .24\textwidth]{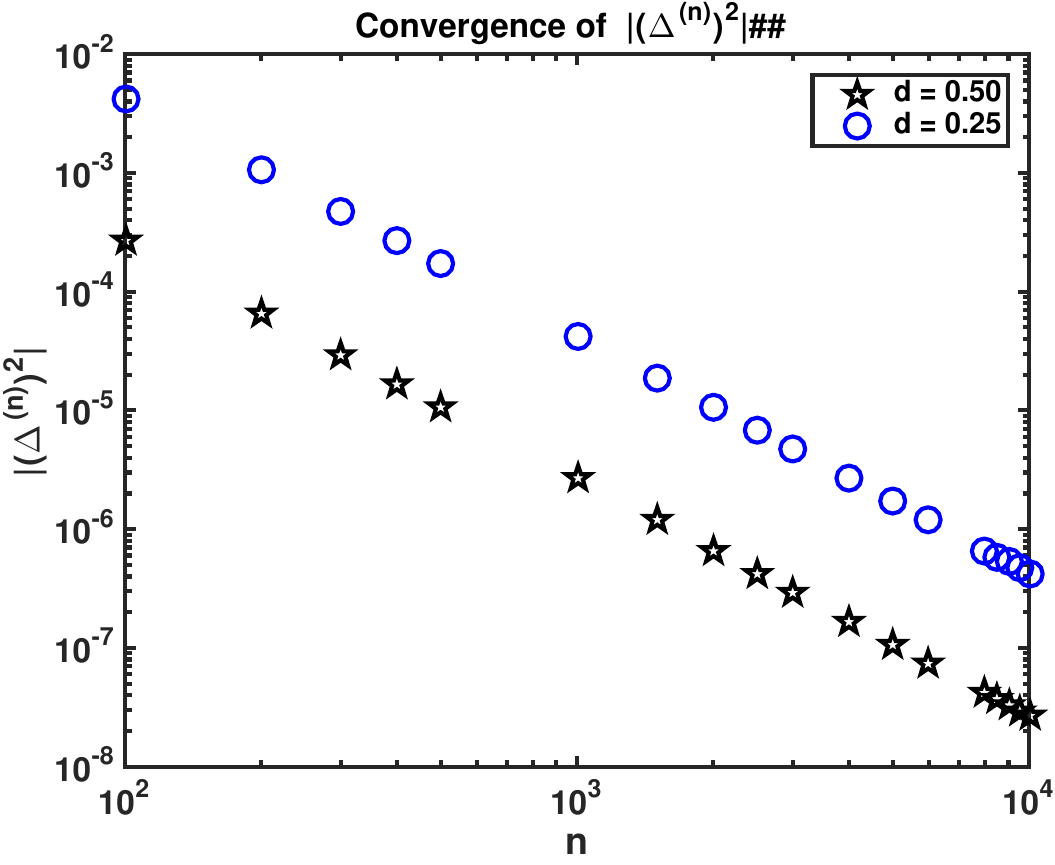}}
\subfigure[Exact $\lvert \paren{{\Delta^{(n)}}}^2 \rvert$ \label{fig:deltaexact}]{\includegraphics[width= .24\textwidth]{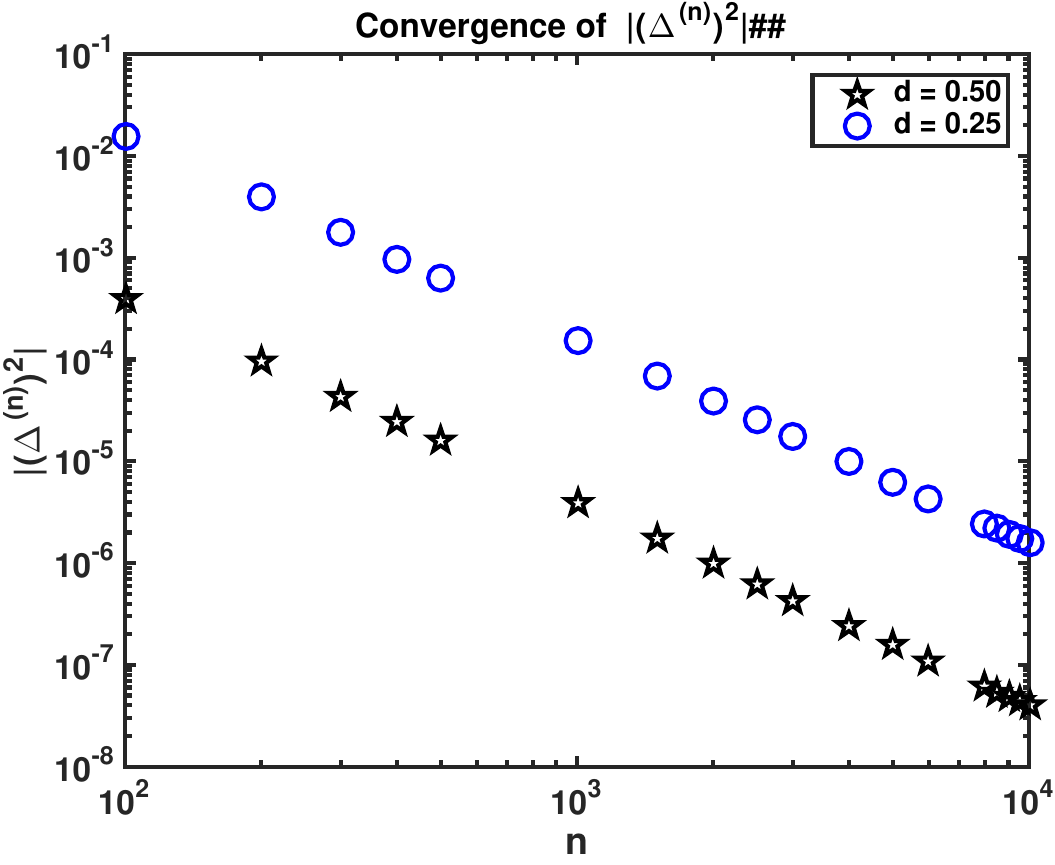}}
\subfigure[$\{\sigma_i^{(n)}\}$ $d=.25$\label{fig:pstara}]{\includegraphics[width= .24\textwidth]{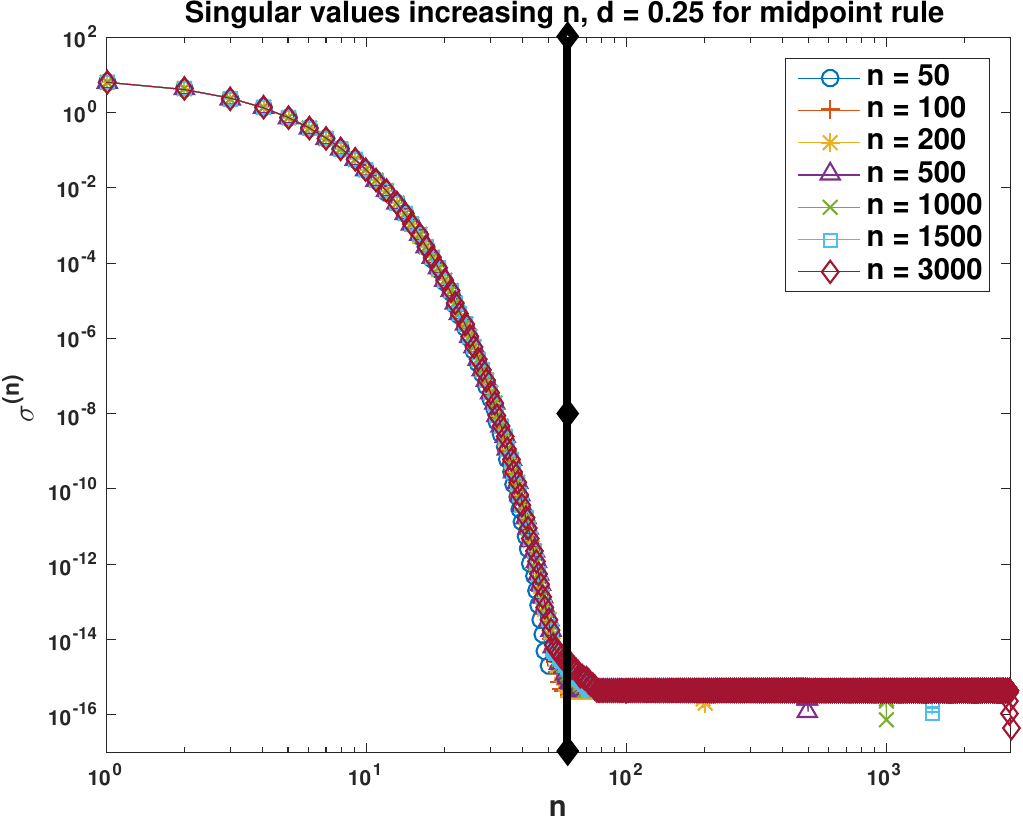}}
\subfigure[$\{\sigma_i^{(n)}\}$ $d=.50$\label{fig:pstarb}]{\includegraphics[width= .24\textwidth]{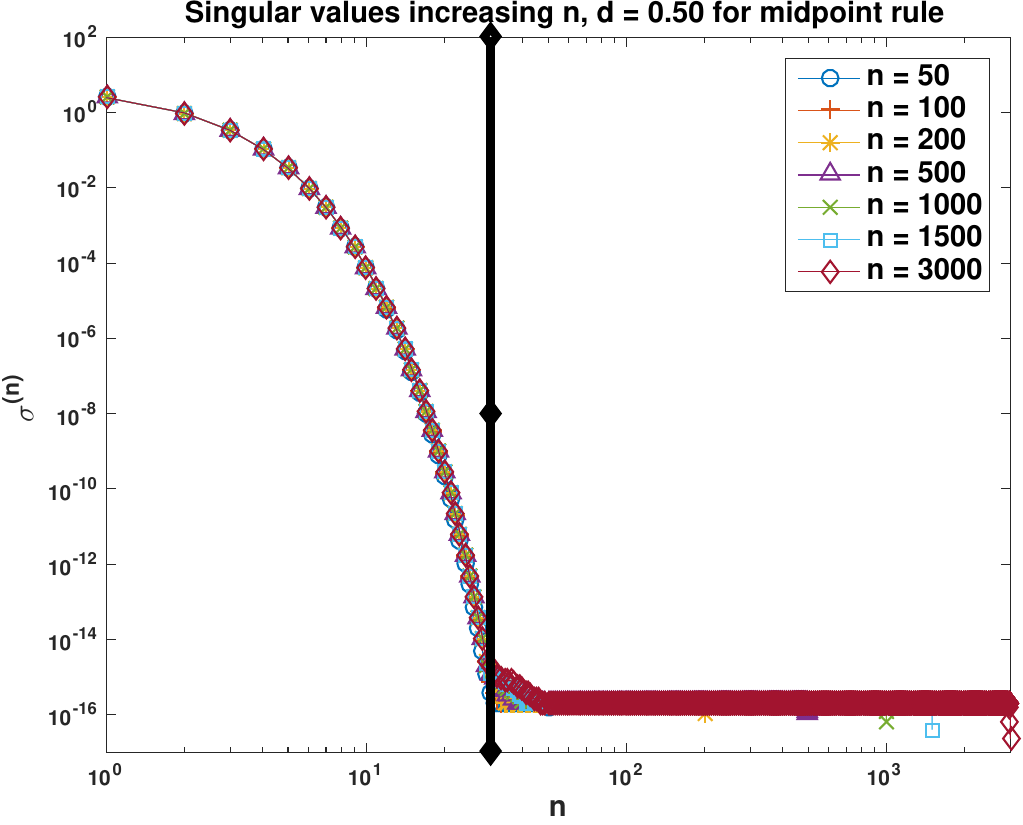}}
\caption{In \ref{fig:delta} $\lvert \paren{{\Delta^{(n)}}}^2 \rvert$ against $n$ for problem \texttt{gravity} with $d=.25$ and $d=.50$, with the matrix approximated using the given quadrature rule. In contrast, \ref{fig:deltaexact} shows the result with the  matrix calculated exactly. In \ref{fig:pstara}-\ref{fig:pstarb} the singular values are plotted  against $n$ with the calculation of $p^*$ for $\epsilon=10^{-15}$ indicated by the vertical line.  Here $n=50$, $100$, $200$, $500$,       $1000$,        $1500$      and   $3000$.  In these, and subsequent figures, the markers and colors are consistently determined by resolution $n$. 
\label{fig:pstar}}
\end{centering}
\end{figure}

\subsection{Illustrating convergence of the functionals}\label{functionals}
We illustrate in Figure~\ref{fig:regfunch1} the convergence of the regularization functionals with truncation at $\sub{p}$ for the \texttt{gravity} problem with $d=0.25$ and $d=0.5$, with discretizations using $n=50$, $100$, $200$, $500$, $1000$, $1500$ and $3000$ points as in Figure~\ref{fig:pstar}. The truncation parameter $\sub{p}$ is determined for $\epsilon=10^{-15}$ in each case. To better identify the location of the root for the MDP and ADP functionals, we plot $\lvert D(\lambda)-p^* \zeta^2\rvert$ and $\lvert C(\lambda)-p^* \zeta^2\rvert$, i.e. assuming safety parameter $\tau=1$ for the MDP.  In the calculations the coefficients $\sub{\beta_i}$ and $\sub{\sigma_i}$ are weighted by the noise in the measurements of $g(s)$, i.e. corresponding to whitening the system by the inverse square root of the covariance matrix of the measured noise in the data.  We pick a constant value of $\zeta^2$ to verify the convergence of the functionals for constant $\zeta^2$ as discussed in section~\ref{theoryresults}. Here, for the MDP, ADP and UPRE $\zeta^2=ds^{(50)}$, i.e. the noise at the coarsest resolution of the problem.  

The UPRE and GCV functionals converge  independent of $n$, while the MDP and ADP are clearly more sensitive to the correct choice of $\zeta^2$ with $n$. On the other hand, the UPRE and GCV functionals become increasingly flat with decreasing $\lambda$ suggesting that the determination of the minimum will be difficult. One can see that the MDP suggests a larger regularization parameter, with this choice of $\tau$, and will likely oversmooth as compared to UPRE and GCV, while the ADP parameter may lead to undersmoothing. 

\begin{figure} 
\begin{centering}
\subfigure[$d=.25$]{\includegraphics[width= .45\textwidth]{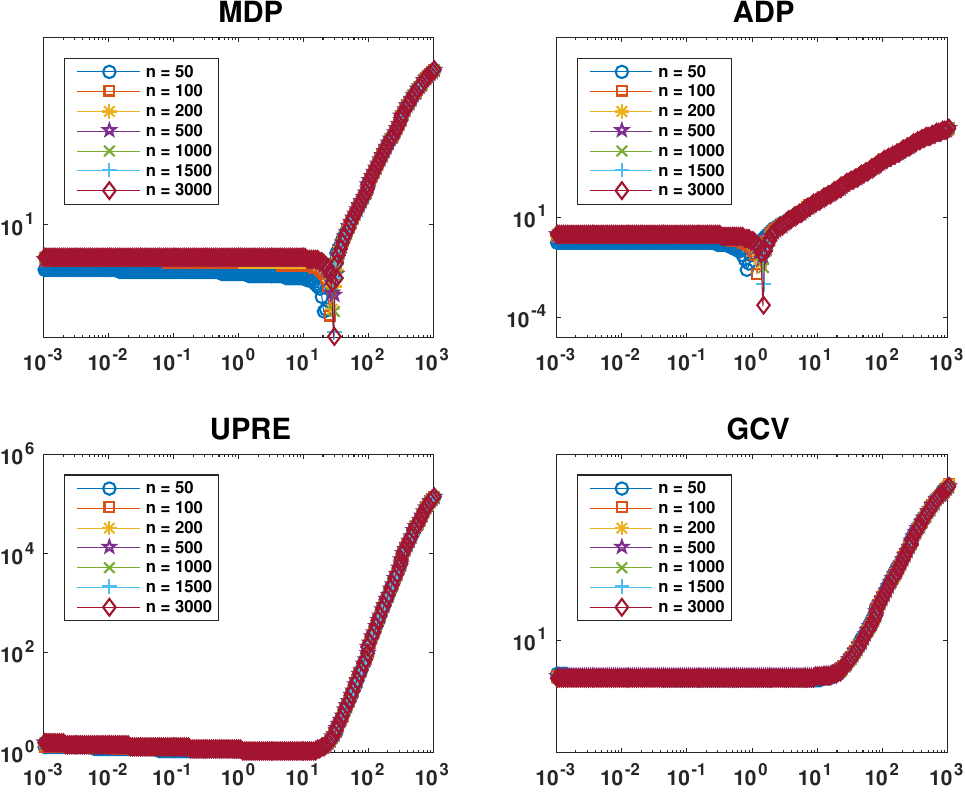}}
\subfigure[$d=.25$]{\includegraphics[width= .45\textwidth]{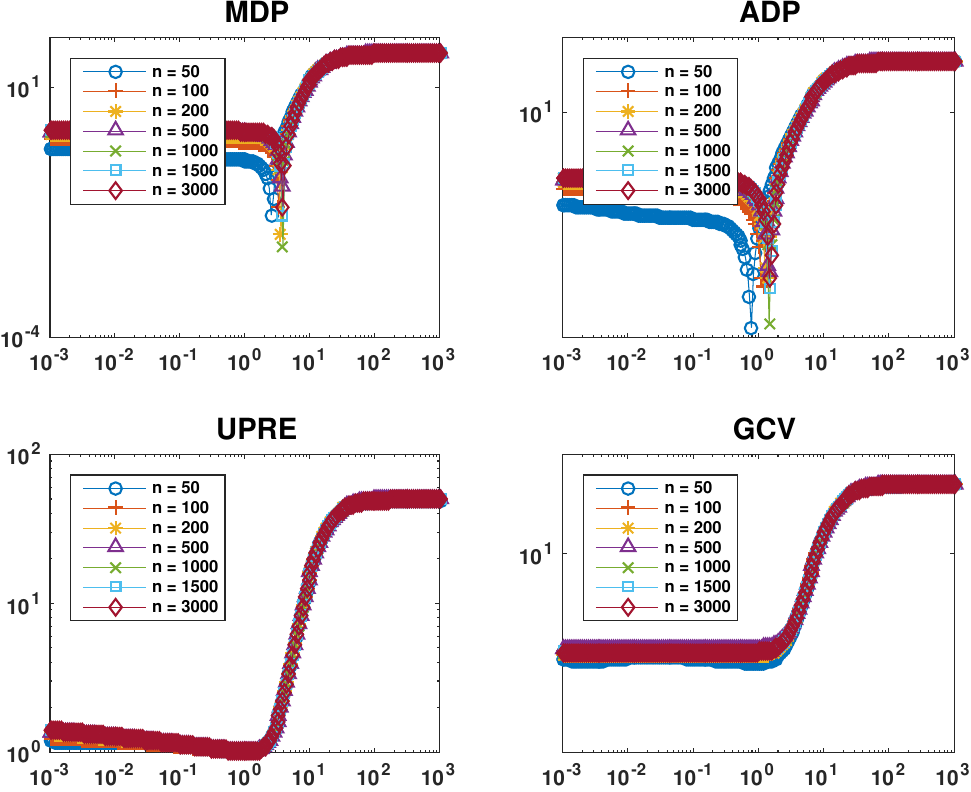}}\\
\subfigure[$d=.50$]{\includegraphics[width= .45\textwidth]{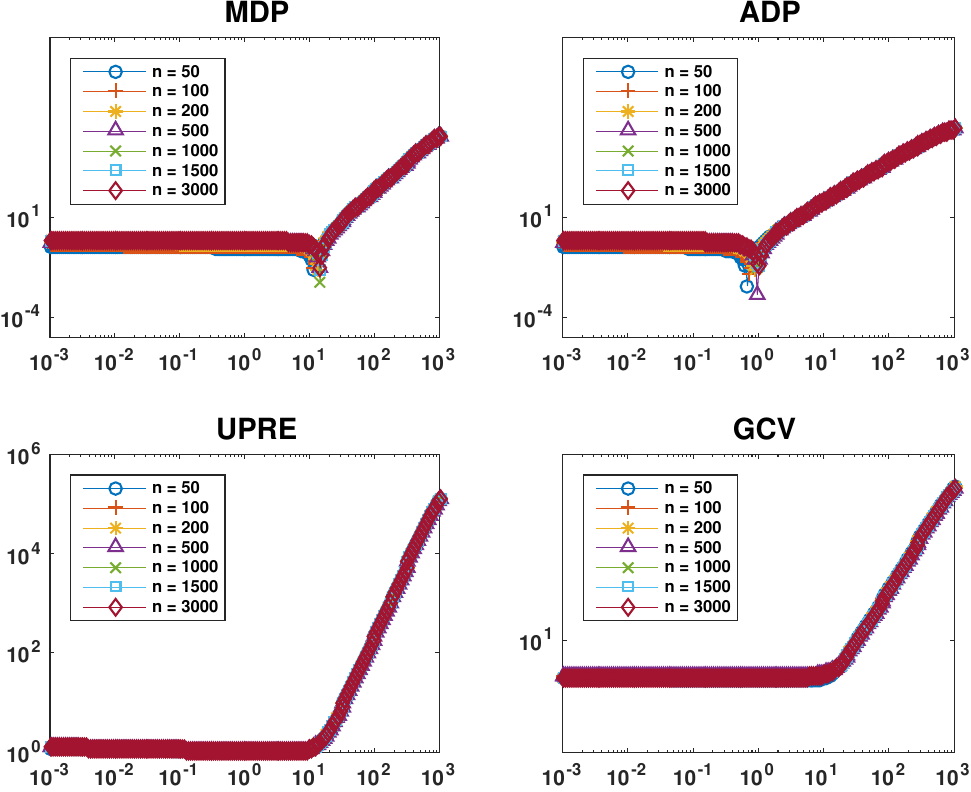}}
\subfigure[$d=.50$]{\includegraphics[width= .45\textwidth]{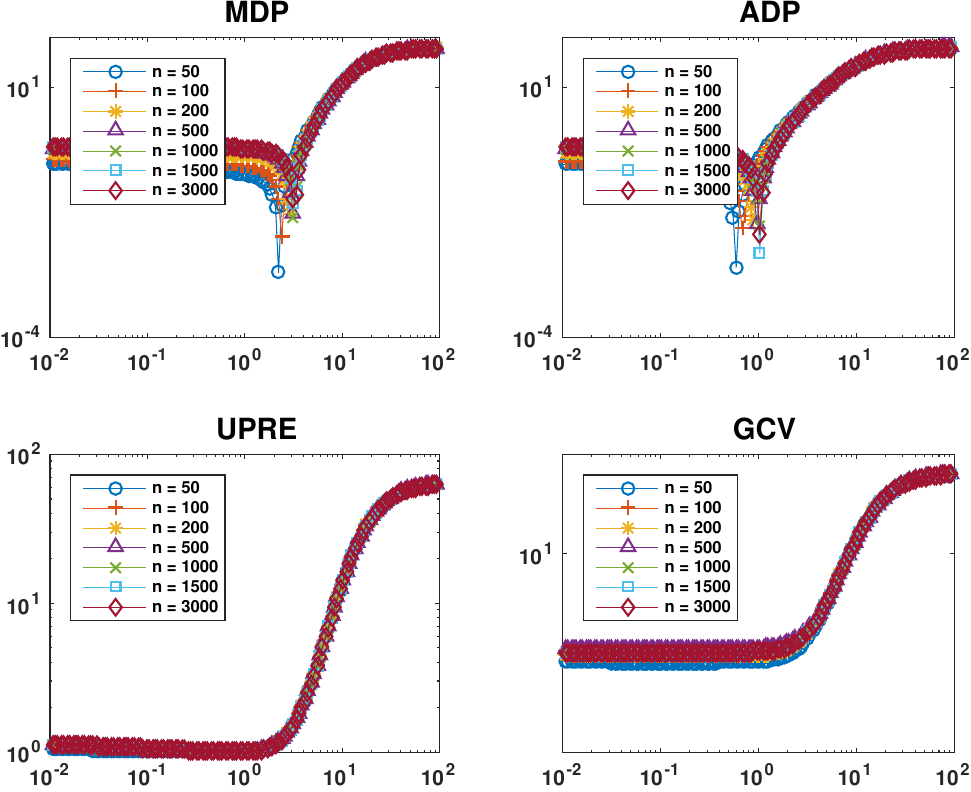}}
\caption{The MDP, ADP, UPRE, and GCV,   functionals  \eqref{TMDP}-\eqref{Tgcv} for data weighted by the inverse covariance matrix of the noise in the data, with  $p^*$ determined for $\epsilon=10^{-15}$, for $d=0.25$, and $d=0.5$. Here in all cases $\zeta^2=ds^{(50)}$.  Note that for the MDP and ADP we plot $\lvert D(\lambda)-p^*\zeta^2\rvert$ and $\lvert C(\lambda)-p^* \zeta^2\rvert$.  \label{fig:regfunch1}}
\end{centering}
\end{figure}

Figure~\ref{fig:regfunchi} illustrates the regularization functionals, calculated as for Figure~\ref{fig:regfunch1} but with $\zeta^2=ds^{(n)}$ for each $n$, ie at the correct variance in each case.  Now one more clearly sees the movement of the MDP and ADP curves to the left, corresponding to $\sub{\lambda} \rightarrow 0$ with $n$, indicative of $\zeta^2=ds^{(n)}$ also converging to $0$ with $n$. On the other hand, the plots for the UPRE and the GCV show that the residual term dominates in each case, so that the curves exhibit the same convergence shown in Figure~\ref{fig:regfunch1}, further emphasizing the potential difficult of minimizing the functionals at low noise levels. Still, the UPRE in Figure~\ref{fig:regfunch1} indicates a greater evidence of a minimum in the given range. It should be noted that the plots for the GCV are independent of the choice of the variance. 

The vertical lines in each case indicate the location of the regularization parameter with $n$, calculated using the scaling argument in \eqref{scaling}, but imposing a fixed variance, e.g. $\left(\zeta^{(50)}\right)^2$ for all $n$ to find $\sub{\tilde{\lambda}}$. 
 In this case $\subl{\lambda}$ decreases with $n$, since it is calculated for decreasing variance $\left(\zeta^{(n)}\right)^2$ with $n$, as $\sub{d s} \rightarrow 0$. 
For the GCV the vertical lines indicate the regularization parameters chosen for each $n$, rather than from $n=50$ scaled to larger $n$. The locations of $\sub{\lambda}$ for the MDP give quite good estimates for the locations of the minima of the relevant functions, but the ADP estimates tend to be larger than would be suggested by the minima, and hence that there may be less under-smoothing when calculated from the case with $n=50$. The difficulty with estimating a good minimum for the GCV is evident. One should expect that  $\sub{\lambda}$ tends to the left with increasing $n$, but this characteristic is not always observed, indeed no monotonicity in $\sub{\lambda}$ is found. 
With the GCV the terms using $\sub{\beta_i}$ for $i>p^*$  converge to $0$, because these coefficients represent the less dominant spectral coefficients in the expansion for $g(s)$, the dominant energy is maintained in the first $p^*$ terms, independent of $n$. Additionally, this means that with $\zeta^2\rightarrow 0$, the UPRE and GCV functionals are both minimizing the residual only scaled by a different constant term, and finding $\sub{\lambda}$ directly with $\zeta^2=ds^{(n)}$ effectively minimizes the residual and may lead to under-smoothing in the solution. We note that  determining the correct tolerance for finding the minimum of the ADP and MDP, formulated as minimization of the distance from the right hand side,  is a limiting factor of  both ADP and MDP methods.

\begin{figure} 
\begin{centering}
\subfigure[$d=.25$]{\includegraphics[width= .45\textwidth]{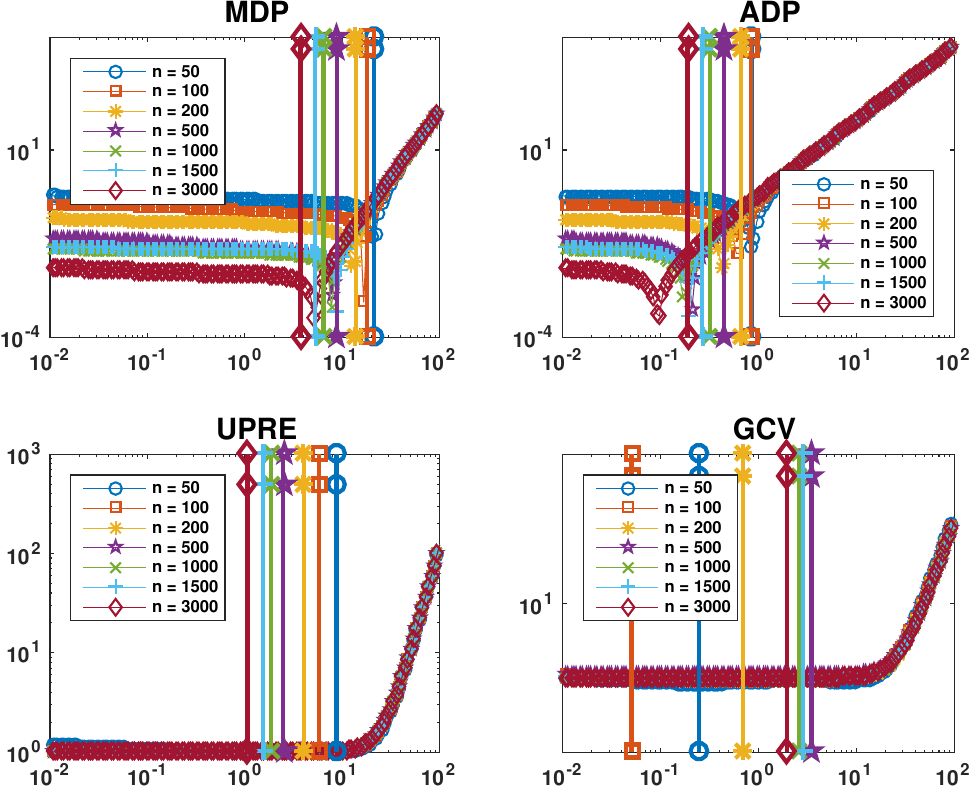}}
\subfigure[$d=.25$]{\includegraphics[width= .45\textwidth]{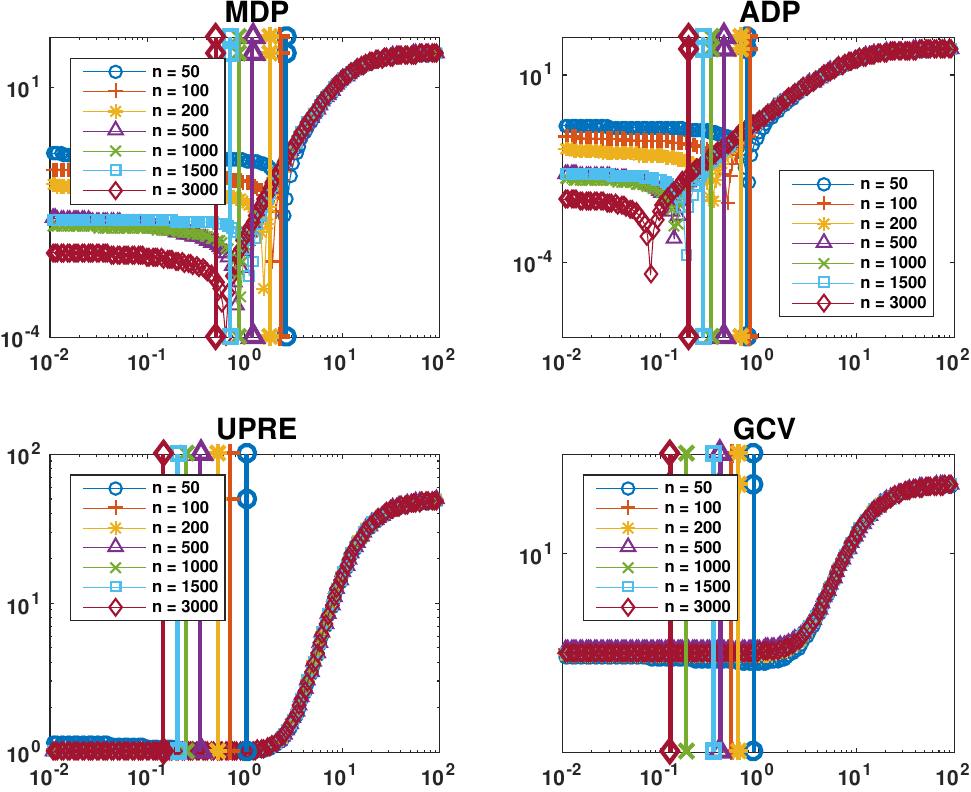}}\\
\subfigure[$d=.50$]{\includegraphics[width= .45\textwidth]{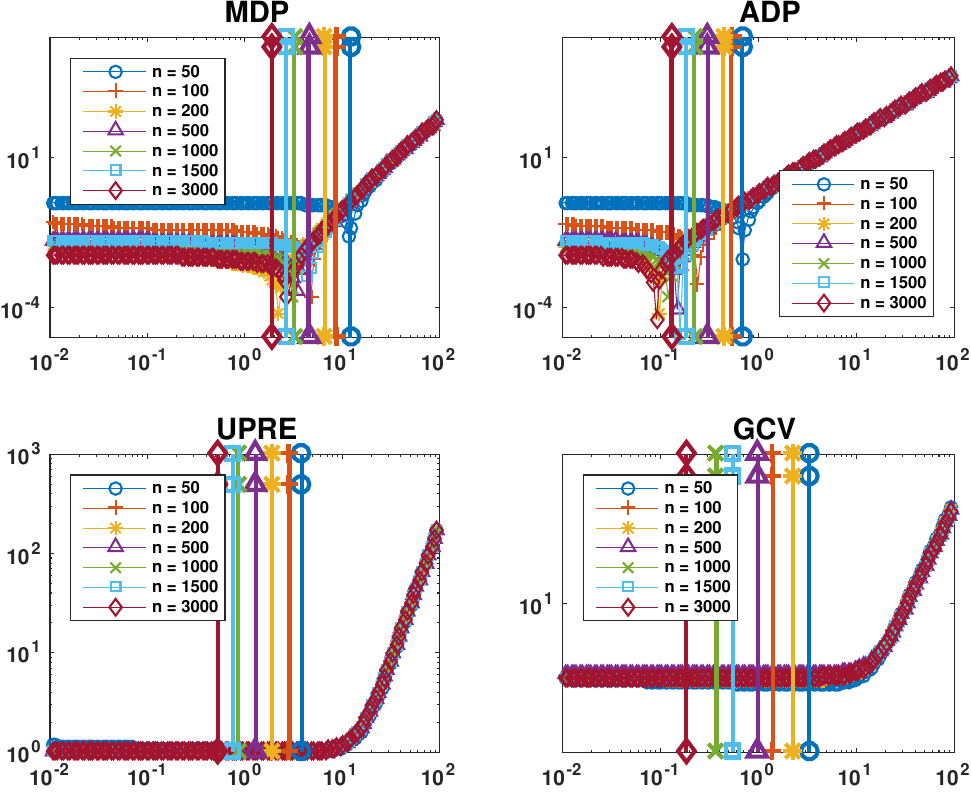}}
\subfigure[$d=.50$]{\includegraphics[width= .45\textwidth]{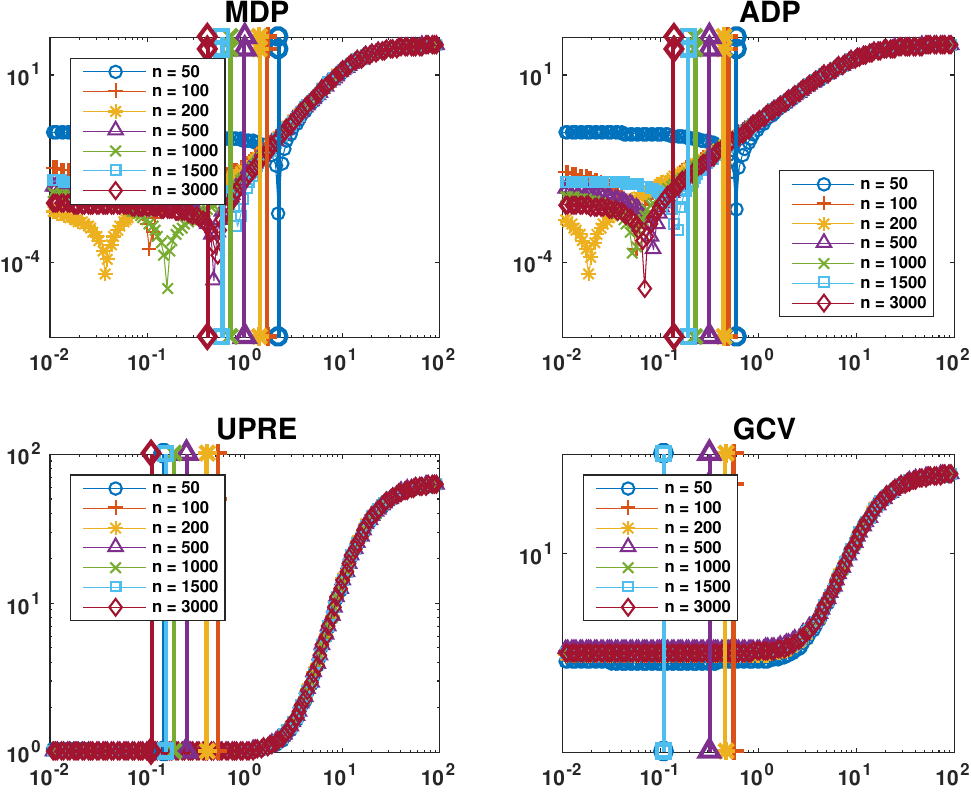}}
\caption{The MDP, ADP, UPRE, and GCV,   functionals  \eqref{TMDP}-\eqref{Tgcv} for data weighted by the inverse covariance matrix of the noise in the data, with  $p^*$ determined for $\epsilon=10^{-15}$, for $d=0.25$, and $d=0.5$. Here in all cases $\zeta^2=ds^{(n)}$, dependent on $n$.  Note that for the MDP and ADP we plot $\lvert D(\lambda)-p^*\zeta^2\rvert$ and $\lvert C(\lambda)-p^* \zeta^2\rvert$. The vertical lines in each case indicate the location of the regularization parameter with $n$, calculated using the scaling argument in \eqref{scaling} using variance $\left(\zeta^{(50)}\right)^2$ for all $n$ to find $\sub{\tilde{\lambda}}$.  \label{fig:regfunchi}}
\end{centering}
\end{figure}

\section{Numerical Experiments}\label{results}
We present a selection of results using Algorithm~\ref{alg} to demonstrate its use for problems with differing characteristics. First we look further at problem \texttt{gravity} with two different levels of conditioning as determined by $d=0.25$ and $d=0.50$. We  also show the results  using \texttt{gravity} for a  discontinuous source. Note that for \texttt{gravity} the spectrum decays very quickly. In contrast,  problem \texttt{deriv2}, also from \cite{Regtools}, has a very slowly decaying spectrum. Finally, therefore, we illustrate simulations for high noise and \texttt{deriv2}. The presented results are illustrative of our experiments with other samples, noise levels and problems and are  given to verify the approach. 
\subsection{Problem \texttt{gravity}}\label{results_1}
We consider a problem of size $N=3000$ for the discretization of  \eqref{eq:gravK}-\eqref{eq:gravf}. Regularization parameters to use for $N=3000$ are obtained by downsampling the data, as described in Algorithm~\ref{alg}, to problems of size $n=50$, $100$, $200$, $500$, $1000$ and $1500$, representing sampling the data at intervals $\ell=60$, $30$, $15$, $6$, $3$ and $2$.  For comparison the solutions without any downsampling, i.e. using $N=3000$ and $\ell=1$ are also provided. Noisy data are obtained by forming $g_\mathrm{obs}(s_i) = g(s_i)+\nu \max_j(|g(s_j)|) e(s_i)$,  for $\nu=0.001$ and $\nu = 0.1$, representing low and high noise. For $d=0.25$ and $d=0.5$ respectively, $\max_j |g(s_j)| = 6.7542$ and $2.1895$.  Errors $e(s_i)$ are drawn from a random normal distribution with variance $1$. The resulting data samples  are illustrated in Figure~\ref{fig:noisy}. 
\begin{figure}
\begin{center}
\subfigure[$d=.25$]{\includegraphics[width=.24\textwidth]{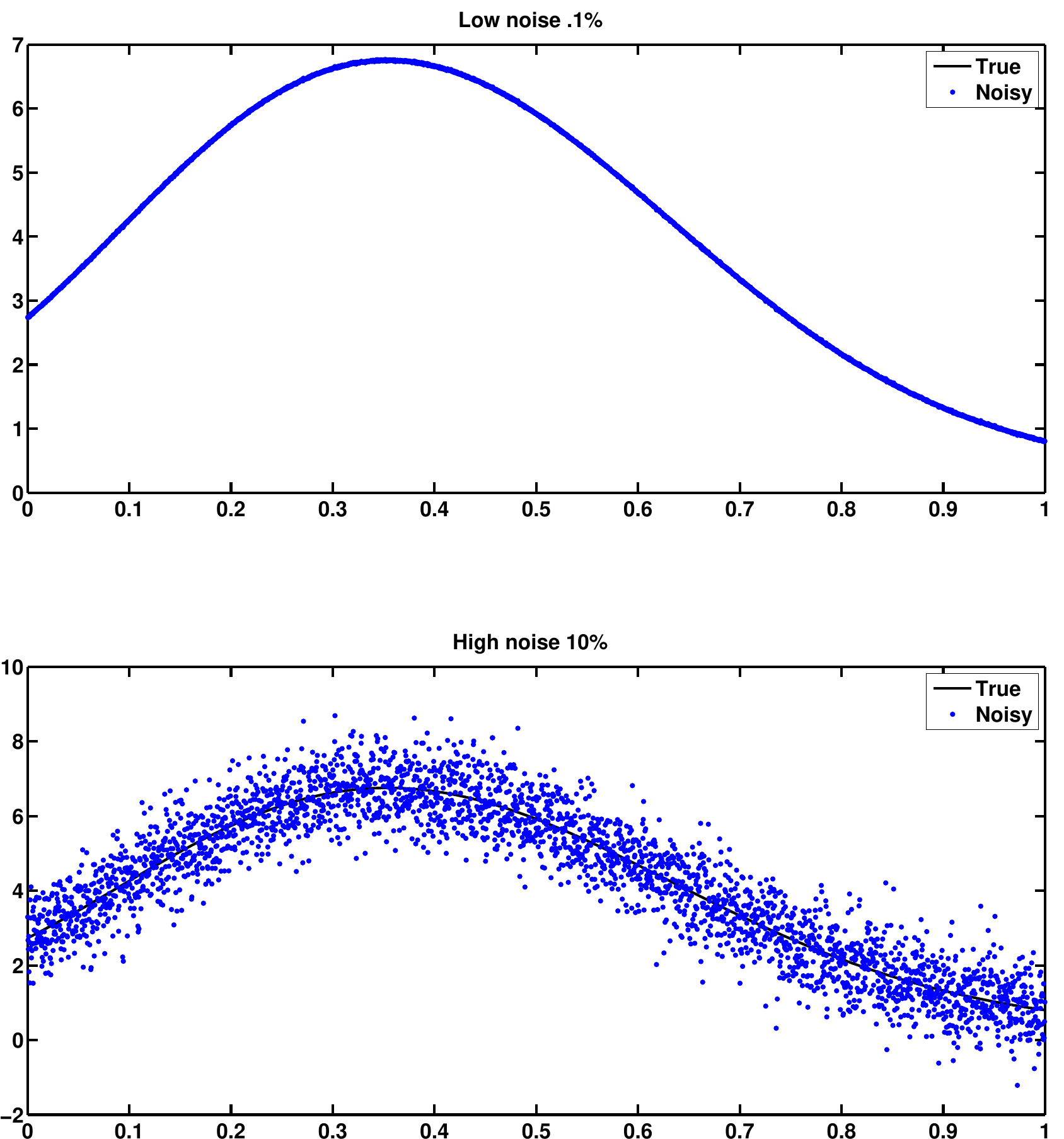}}
\subfigure[$d=.50$]{\includegraphics[width=.24\textwidth]{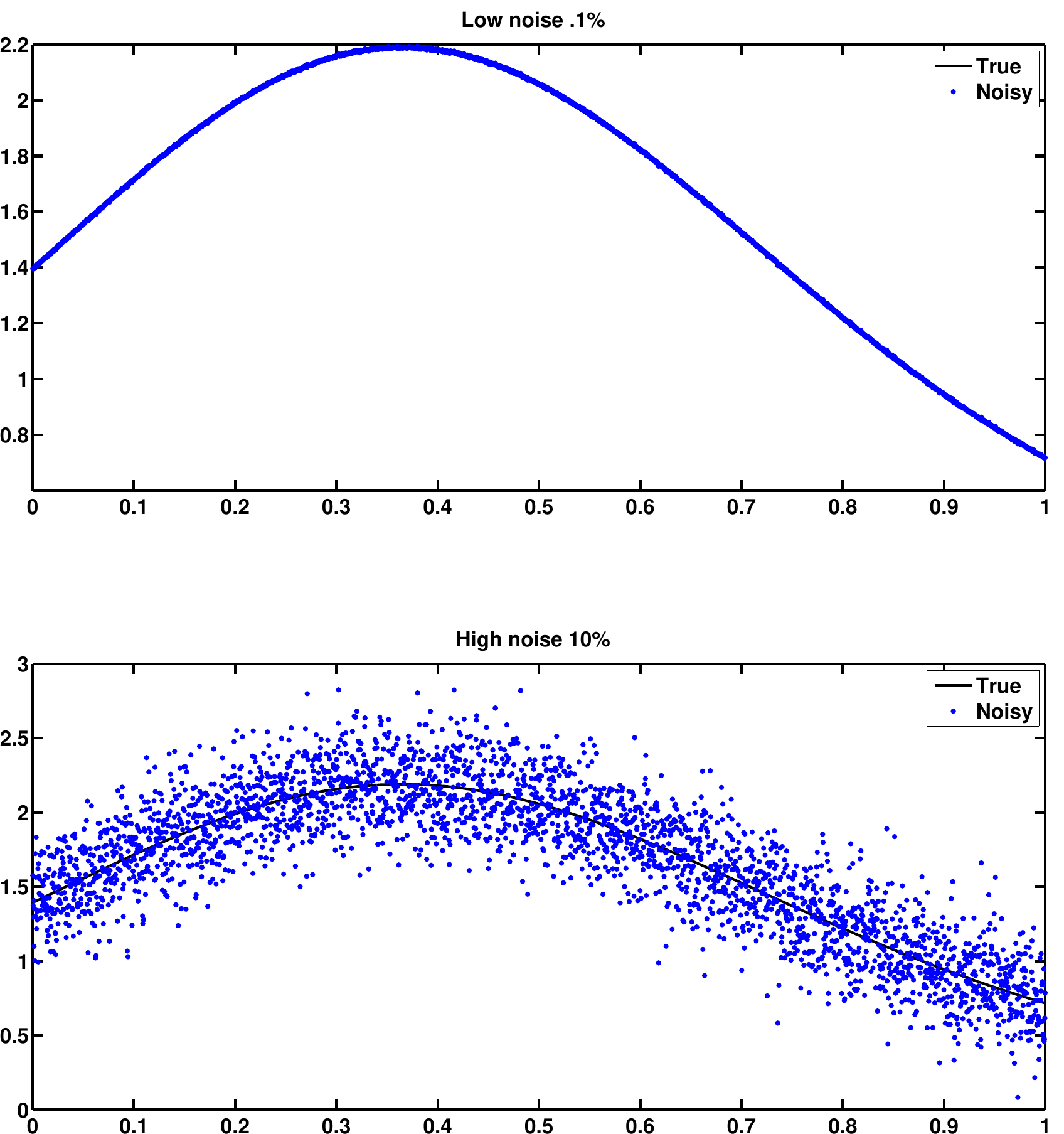}}
\end{center}
\caption{Illustrative noisy data with $.1\%$ and $10\%$ noise for problem \texttt{gravity} with $N=3000$.  \label{fig:noisy}}
\end{figure}
%
 
We note that  the estimate for  $p^*$ which is approximated by $\sub{p}$ may not be stable initially for small $n$, thus potentially leading to different estimates of the regularization parameter to use for $\subl{\lambda}$, when estimated using different values for $n$, ie different samplings. Although one  may theoretically chose to determine $p^*$  for any given $\epsilon$, here we present results corresponding to $\epsilon=10^{-15}$, which as can be seen from Figure~\ref{fig:pstar} is effectively the point at which the  singular values for $i>p^*$ are contaminated by numerical noise. In all cases the regularization parameter at resolution $n$ is calculated by each of the regularization parameter estimation methods, MDP, ADP, UPRE and GCV using $\zeta^2=ds^{(n)}$. The estimate with the GCV is independent of the given $\zeta^2$. Given $\sub{\lambda}$ the fine solution using $3000$ points is calculated using \eqref{scaling} and the dominant $p^*=\sub{p}$ components of the SVD for the matrix $\subl{A}$. 

We first illustrate in Figure~\ref{fig:soln}  the solutions for a single arbitrarily chosen noise vector. All solutions are calculated using $3000$ points, but for clarity in the plots the solutions are plotted using just $50$ points. Any solution which has an amplitude greater than $3$  is also omitted from the plots, in order that the plots are not cluttered by the high oscillatory behavior of the severely unstable solutions. Solutions with less severe instability are also evident as moderate oscillations around the true solutions.  Note that the actual amplitude of the exact solution is less than $1.5$. In each case the individual legends indicate which solutions are plotted. 
\begin{figure}
\begin{centering}
\subfigure[$d=.25$, $.1\%$ noise ]{\includegraphics[width= .49\textwidth]{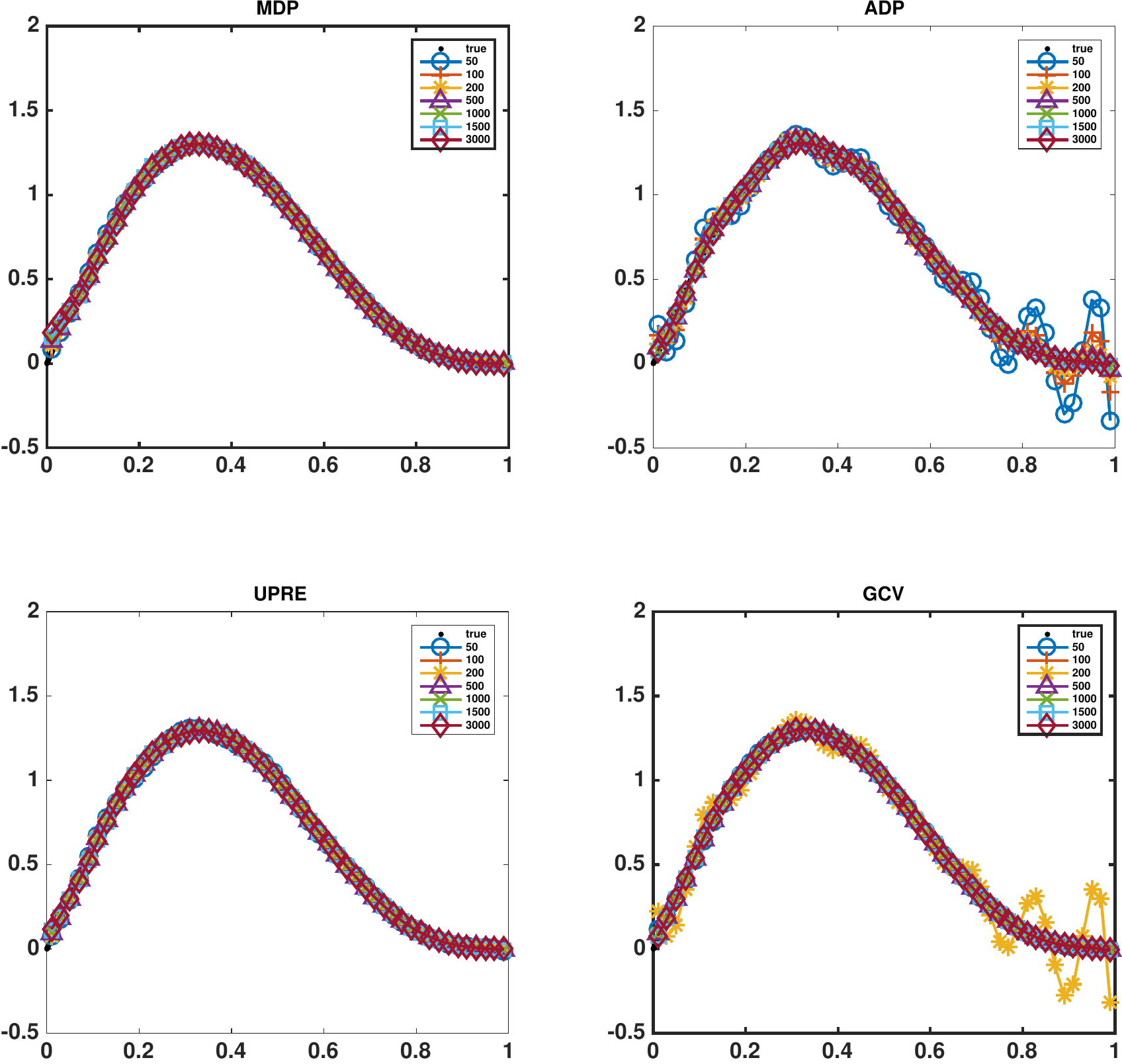}} 
\subfigure[$d=.25$, $10\%$ noise ]{\includegraphics[width= .49\textwidth]{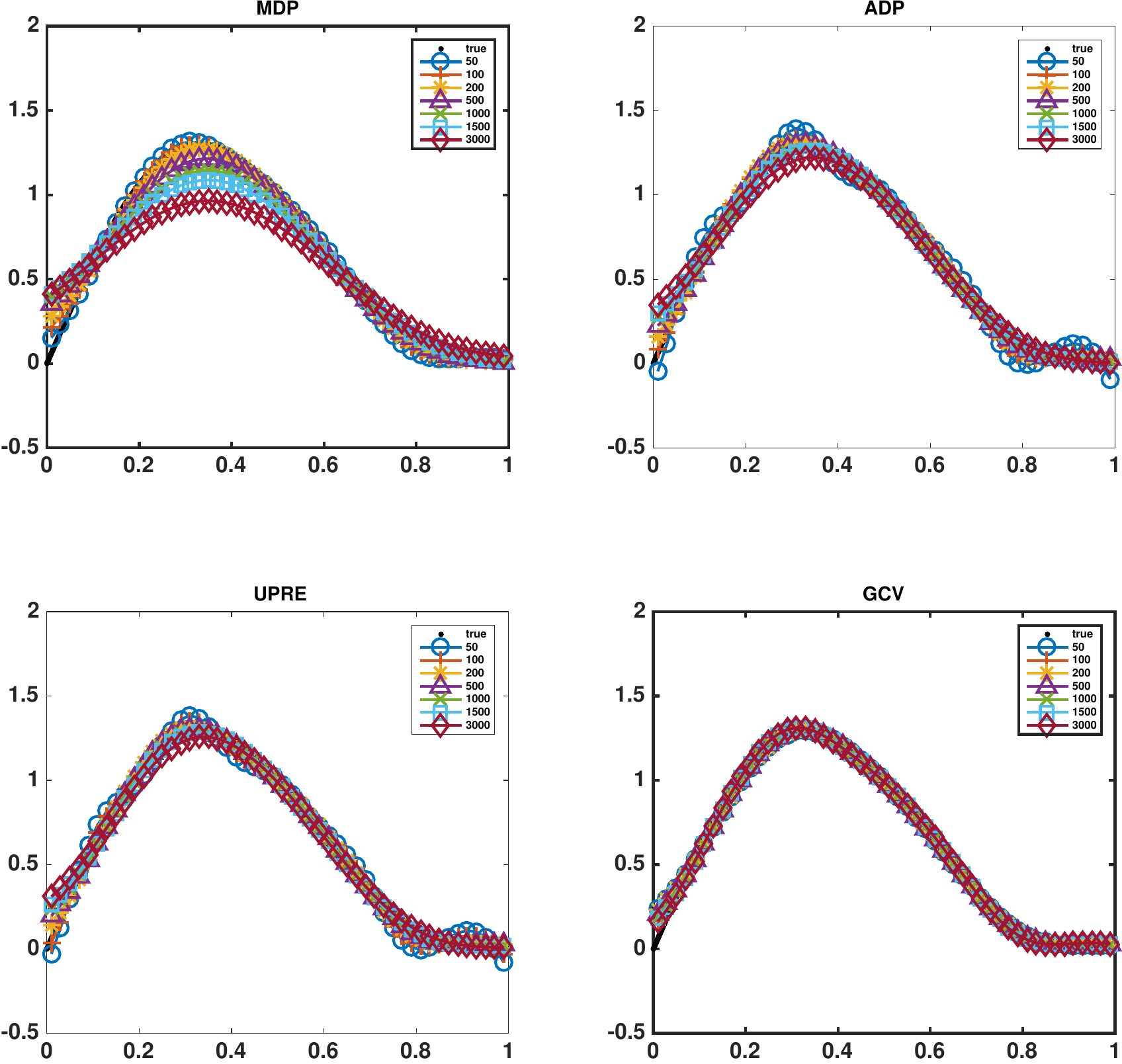}} 
\subfigure[$d=.50$, $ .1\%$ noise]{\includegraphics[width= .49\textwidth]{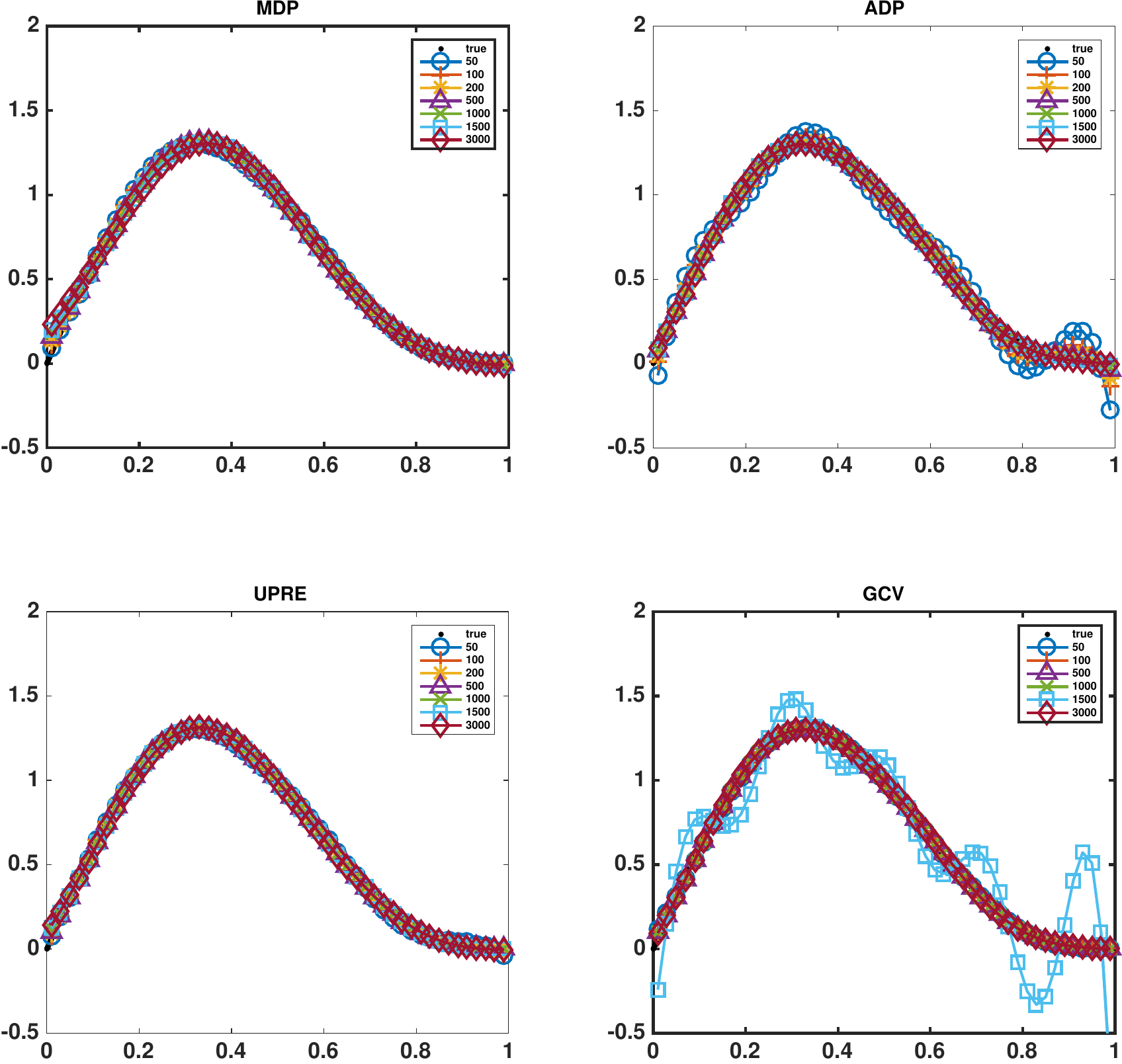}} 
\subfigure[$d=.50$, $10\%$ noise]{\includegraphics[width= .49\textwidth]{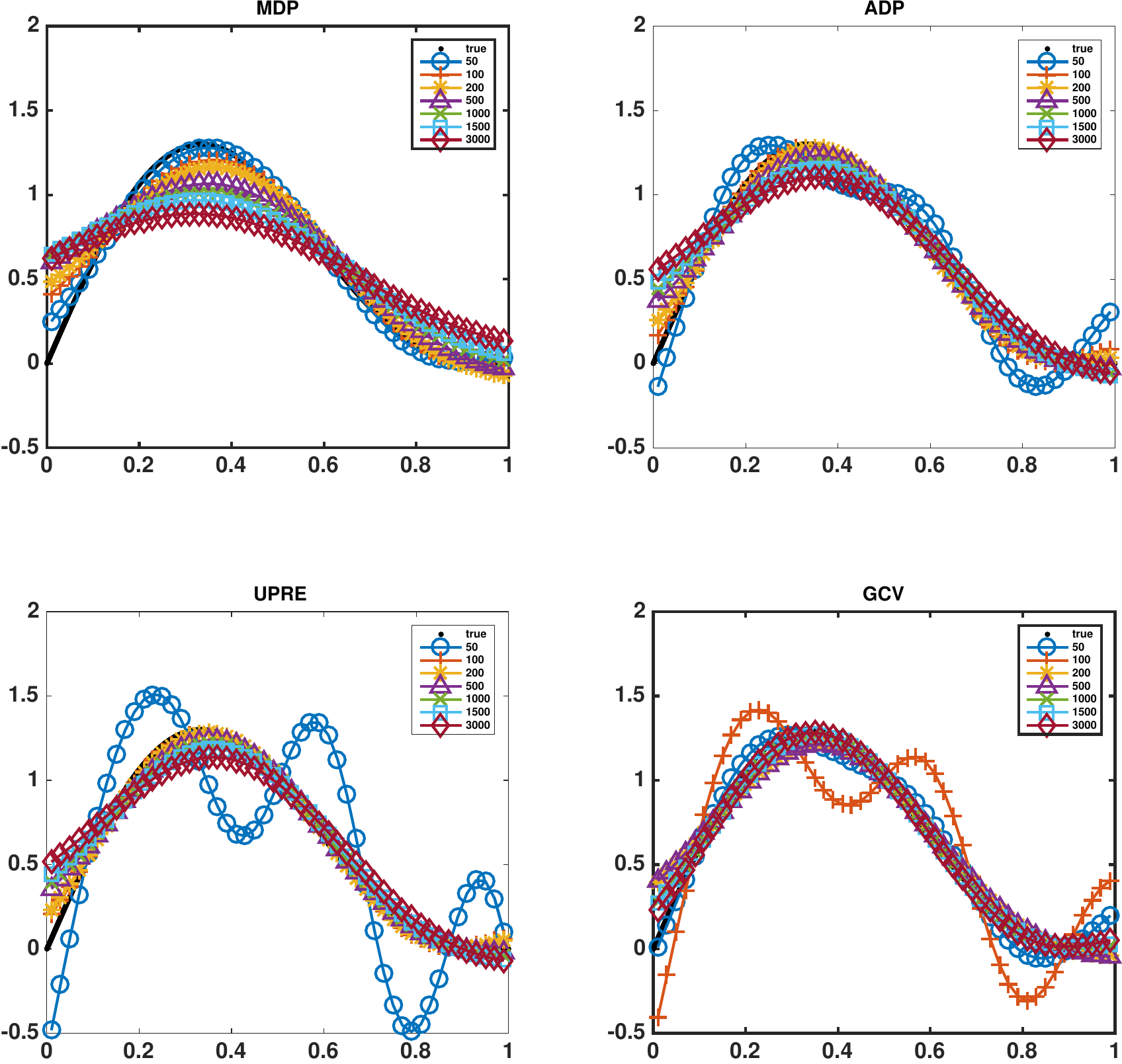}} 
\caption{Solutions of problem \texttt{gravity} for the data illustrated in Figure~\ref{fig:noisy} using Algorithm~\ref{alg}. Here in order to improve clarity, only those solutions which have a maximum amplitude less than $3$ are shown.   The legend in each case indicates which solutions satisfy this constraint.  In all cases the regularization parameter is found at the given $n$ and used to provide the regularization parameter for $N=3000$, e.g. we find the regularization parameter with $n=50$, $100$, $200$, $500$, $1000$, $1500$ and $3000$ and then produce the solution at $3000$ points with this regularization parameter. The solution found using $N=3000$ is illustrated to demonstrate the quality of the solutions obtained for $n<3000$. To compare solutions, they are all plotted at indices $30$ to $2970$ at increments of $60$ for the solution of length $3000$.  
\label{fig:soln}}
\end{centering}
\end{figure}
The most immediate observation is that no one single method is perfect for all $n$, but this is not at all surprising; methods for estimating regularization parameters  are not foolproof and it would be deceptive to indicate otherwise. The success of any given method depends on the specific given  right hand side data. An example of this is shown in the case with $.10\%$ noise and $d=0.50$ for which  GCV   generates  acceptable solutions for all $n$, except $n=200$, which is not shown. Further the solution obtained by the ADP for the cases with low noise are less stable for $N=3000$, which  illustrates that the estimate for $\lambda^{(N)}$ obtained from $n<N$ may yield more stable solutions. Overall,  these results confirm that  GCV is less reliable, as already suggested from the discussion of the functionals in section~\ref{functionals}. On the other hand, in most cases, except for high noise and $d=0.5$, the regularization parameter estimated at the coarsest level, with $n=50$ giving $\lambda^{(50)}$  and then used to obtain the solution for size $N=3000$,  is remarkably robust.

 \begin{table}
\caption{Relative errors, mean(standard deviation) over $25$ samples for $d=0.25$ and $0.10\%$ noise.  Solutions calculated for resolution with $3000$ points using the regularization parameter calculated using $n=50$, $100$, $200$, $500$, $1000$, $1500$ and $3000$ points, using $\zeta^2=ds^{(n)}$ in the estimation of $\sub{\lambda}$. The minimum average relative error by each method for $n<3000$ in bold face. \label{Errord100noise1}}
\begin{tabular}{|*{
9}{c|}} \hline\noalign{\smallskip}
$n$& ADP & MDP &UPRE &GCV  \\ \noalign{\smallskip}\hline\noalign{\smallskip}
$50 $&$7.4813(37.523)$&$7.2614(37.566)$&$0.0752(0.207)$&$1.0691(5.442)$\\ 
$100 $&$0.0992(0.033)$&$\mathbf{0.0104(0.002)}$&$0.0156(0.006)$&$0.0369(0.091)$\\ 
$200 $&$0.0606(0.020)$&$0.0129(0.001)$&$0.0109(0.003)$&$0.2521(1.048)$\\ 
$500 $&$0.0354(0.010)$&$0.0169(0.001)$&$\mathbf{0.0097(0.002)}$&$0.0166(0.017)$\\ 
$1000 $&$0.0242(0.006)$&$0.0208(0.001)$&$0.0108(0.002)$&$\mathbf{0.0147(0.018)}$\\ 
$1500 $&$\mathbf{0.0196(0.005)}$&$0.0234(0.000)$&$0.0119(0.002)$&$0.0310(0.061)$\\ 
$3000 $&$0.0141(0.004)$&$0.0289(0.000)$&$0.0142(0.001)$&$0.0364(0.074)$\\ 
\hline\noalign{\smallskip}
\end{tabular}\end{table}
\begin{table}
\caption{Relative errors, mean(standard deviation) over $25$ samples for $d=0.25$ and $10.00\%$ noise.  Solutions calculated for resolution with $3000$ points using the regularization parameter calculated using $n=50$, $100$, $200$, $500$, $1000$, $1500$ and $3000$ points, using $\zeta^2=ds^{(n)}$. The minimum average relative error by each method for $n<3000$ in bold face. \label{Errord100noise100}}
\begin{tabular}{|*{
9}{c|}} \hline\noalign{\smallskip}
$n$& ADP & MDP &UPRE &GCV  \\ \noalign{\smallskip}\hline\noalign{\smallskip}
$50 $&$8.5622(28.062)$&$7.0155(25.881)$&$4.7672(20.958)$&$0.1437(0.281)$\\ 
$100 $&$0.0981(0.038)$&$\mathbf{0.0512(0.010)}$&$0.1040(0.039)$&$0.1615(0.369)$\\ 
$200 $&$0.0608(0.019)$&$0.0642(0.007)$&$0.0714(0.026)$&$0.3365(1.004)$\\ 
$500 $&$\mathbf{0.0511(0.010)}$&$0.0978(0.004)$&$\mathbf{0.0522(0.014)}$&$0.3274(1.385)$\\ 
$1000 $&$0.0606(0.008)$&$0.1376(0.003)$&$0.0525(0.010)$&$\mathbf{0.1082(0.169)}$\\ 
$1500 $&$0.0705(0.006)$&$0.1689(0.003)$&$0.0579(0.008)$&$0.3233(0.838)$\\ 
$3000 $&$0.0949(0.004)$&$0.2429(0.003)$&$0.0742(0.006)$&$0.0939(0.073)$\\ 
\hline\noalign{\smallskip}
\end{tabular}\end{table}
\begin{table}
\caption{Relative errors, mean(standard deviation) over $25$ samples for $d=0.50$ and $0.10\%$ noise.  Solutions calculated for resolution with $3000$ points using the regularization parameter calculated using $n=50$, $100$, $200$, $500$, $1000$, $1500$ and $3000$ points, using $\zeta^2=ds^{(n)}$. The minimum average relative error by each method for $n<3000$ in bold face. \label{Errord200noise1}}
\begin{tabular}{|*{
9}{c|}} \hline\noalign{\smallskip}
$n$& ADP & MDP &UPRE &GCV  \\ \noalign{\smallskip}\hline\noalign{\smallskip}
$50 $&$0.2758(0.211)$&$0.0376(0.071)$&$1.5614(6.356)$&$\mathbf{0.0790(0.221)}$\\ 
$100 $&$0.1046(0.051)$&$\mathbf{0.0147(0.004)}$&$0.0234(0.016)$&$0.1837(0.593)$\\ 
$200 $&$0.0628(0.030)$&$0.0199(0.002)$&$0.0145(0.006)$&$1.3135(3.823)$\\ 
$500 $&$0.0351(0.017)$&$0.0282(0.001)$&$\mathbf{0.0131(0.004)}$&$0.9100(3.662)$\\ 
$1000 $&$0.0237(0.011)$&$0.0352(0.001)$&$0.0154(0.003)$&$0.2765(0.844)$\\ 
$1500 $&$\mathbf{0.0194(0.009)}$&$0.0395(0.001)$&$0.0176(0.003)$&$1.0593(3.822)$\\ 
$3000 $&$0.0148(0.006)$&$0.0487(0.001)$&$0.0226(0.002)$&$1.1728(5.080)$\\ 
\hline\noalign{\smallskip}
\end{tabular}\end{table}
\begin{table}
\caption{Relative errors, mean(standard deviation) over $25$ samples for $d=0.50$ and $10.00\%$ noise.  Solutions calculated for resolution with $3000$ points using the regularization parameter calculated using $n=50$, $100$, $200$, $500$, $1000$, $1500$ and $3000$ points, using $\zeta^2=ds^{(n)}$ .The minimum average relative error by each method for $n<3000$ in bold face. \label{Errord200noise100}}
\begin{tabular}{|*{
9}{c|}} \hline\noalign{\smallskip}
$n$& ADP & MDP &UPRE &GCV  \\ \noalign{\smallskip}\hline\noalign{\smallskip}
$50 $&$4.4299(12.051)$&$2.7538(7.493)$&$3.4666(9.399)$&$0.8003(2.611)$\\ 
$100 $&$0.0974(0.051)$&$\mathbf{0.1212(0.017)}$&$0.1056(0.055)$&$0.4049(1.295)$\\ 
$200 $&$\mathbf{0.0843(0.032)}$&$0.1623(0.011)$&$\mathbf{0.0845(0.039)}$&$\mathbf{0.2221(0.500)}$\\ 
$500 $&$0.1077(0.020)$&$0.2142(0.006)$&$0.0965(0.024)$&$0.6397(1.657)$\\ 
$1000 $&$0.1397(0.014)$&$0.2555(0.005)$&$0.1222(0.017)$&$0.5330(1.379)$\\ 
$1500 $&$0.1591(0.011)$&$0.2809(0.004)$&$0.1399(0.014)$&$2.0134(6.776)$\\ 
$3000 $&$0.1932(0.007)$&$0.3295(0.003)$&$0.1723(0.011)$&$2.5886(7.040)$\\ 
\hline\noalign{\smallskip}
\end{tabular}\end{table}

To further examine the performance of regularization parameter estimation based on the approximate singular expansion, the experiment illustrated in Figure~\ref{fig:noisy} was repeated for $25$ distinct representations of the noise vectors. The approach for calculating the regularization parameter and solution  uses Algorithm~\ref{alg} for each noise vector. The relative error was calculated with respect to the known true solution, and both mean and standard deviation for errors over all $25$ realizations of the noise were obtained, and are reported in Tables~\ref{Errord100noise1}, \ref{Errord100noise100}, \ref{Errord200noise1}, and \ref{Errord200noise100}, for $d=0.25$ and noise $.1\%$ and $10\%$, and then $d=0.5$ and the two noise levels, respectively.  The results validate the expectation from examination of the functionals. Overall the GCV is generally less robust. In all cases the approaches can be more robust for estimating $\subl{\lambda}$ when obtained from subsampling.  Examination of the obtained regularization parameters, not shown here, also confirms that the GCV obtained results show greater variability, even though it should be noted that we use an expensive approach with sampling across $1000$ choices of $\lambda$ in the given range and then seek the minimum around the minimal value found using Matlab function \texttt{fminbnd}.

\subsection{Piecewise constant solution}
It is well-known that regularization parameter estimation is more challenging for non smooth solutions, namely for those for which the exact spectral coefficients  do not decay  quickly to $0$.  The contamination of  spectral coefficients associated with higher frequencies in the basis, i.e. the vectors $\bo v_i$ for larger $i$,  limits the ability to accurately resolve discontinuities in the solutions, and alternative regularizing norms are required, e.g. total variation, iterative regularization etc, e.g. \cite{SB,vogel:02,WoRo:07,Zhd}.  At the heart of such techniques, however, are standard  Tikhonov regularizers.  Thus the ability to obtain reasonable estimates of the solution, even in the presence of sharp gradients and/or discontinuities is still relevant as the component of a more general regularization approach. Indeed, effective regularizers that can be obtained efficiently are even more significant within the context of an iteratively refined Tikhonov solution for obtaining an approximate $L_1$ regularizer. We therefore examine Algorithm~\ref{alg} for the piecewise constant function indicated in Figure~\ref{data1}, replacing source \eqref{eq:gravf} for the \texttt{gravity} problem with $d=0.25$. Two noise levels  and  regularization parameters $\sub{\lambda}$ found using $n=50$, $100$, $200$, $500$ and $1000$ points, with $\epsilon=10^{-15}$,  are used to obtain the solutions with $1000$ points. The presented  solutions in Figure~\ref{discontdata} are consistent with expectations from the results presented in section~\ref{results_1}. While we would not expect to resolve the discontinuities with a smoothing regularizer, the results with $10\%$ noise are sufficiently  encouraging to support  future study of the downsampling techniques  in the context of  iterative edge enhancing regularizers. 
 \begin{figure}
\begin{center}
\subfigure[Data\label{data1}] {\includegraphics[width=.18\textwidth,height=.3\textwidth]{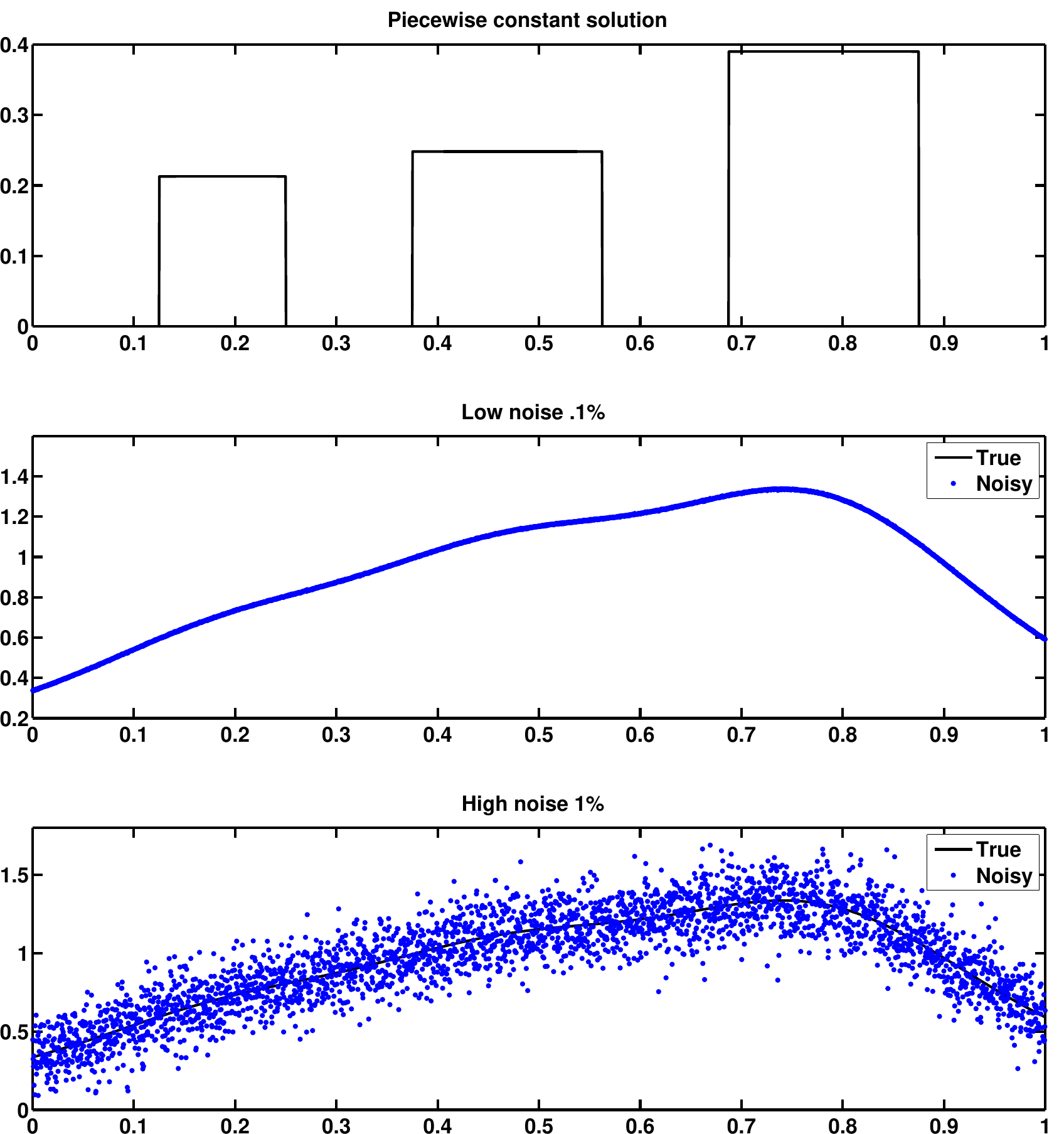}}
\subfigure[Low Noise $d=0.25$, $.1\%$ ]{\includegraphics[width=.4\textwidth, height=.3\textwidth]{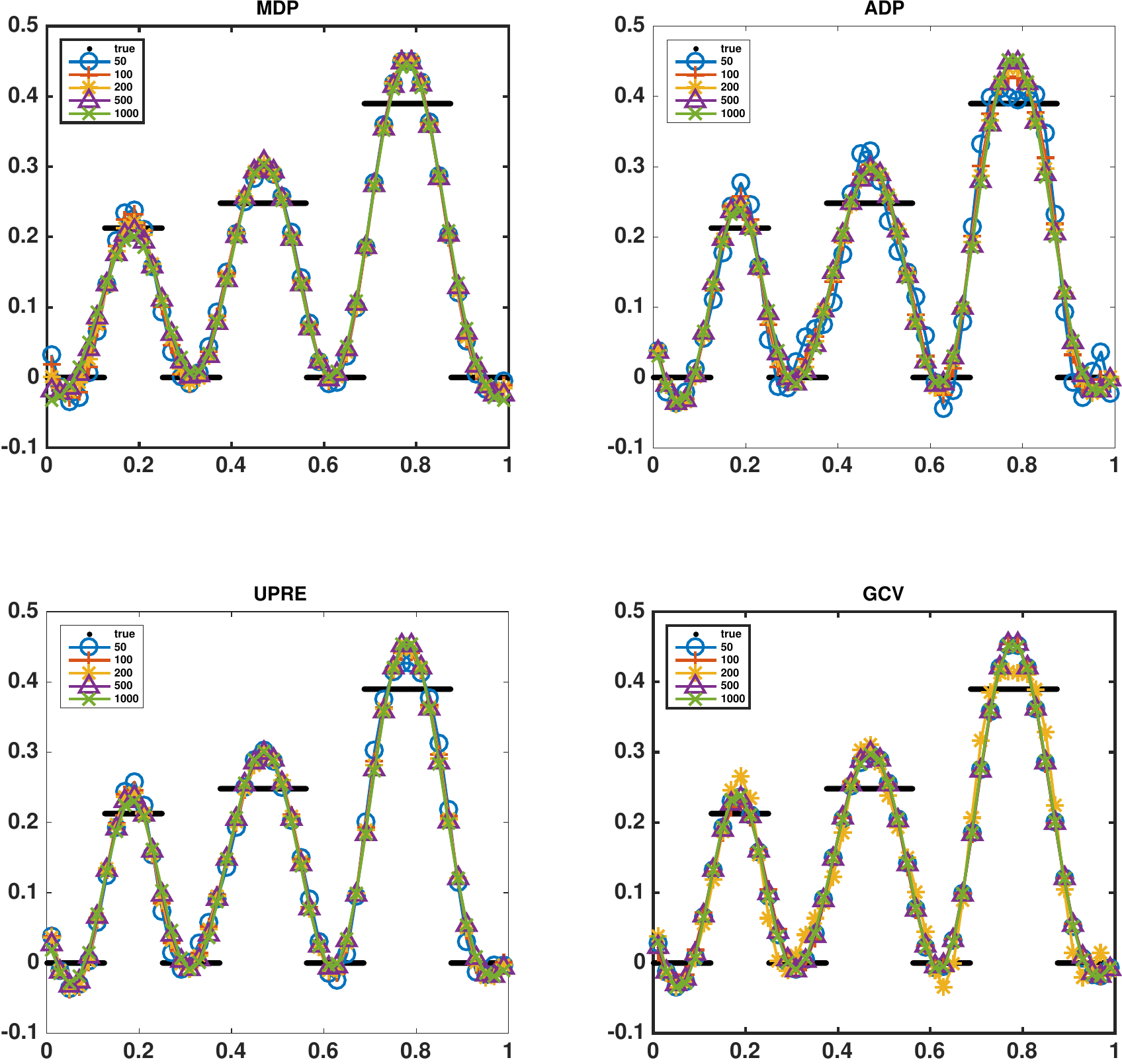}}
\subfigure[High Noise $d=0.25$, $10\%$ ]{\includegraphics[width=.4\textwidth, height=.3\textwidth]{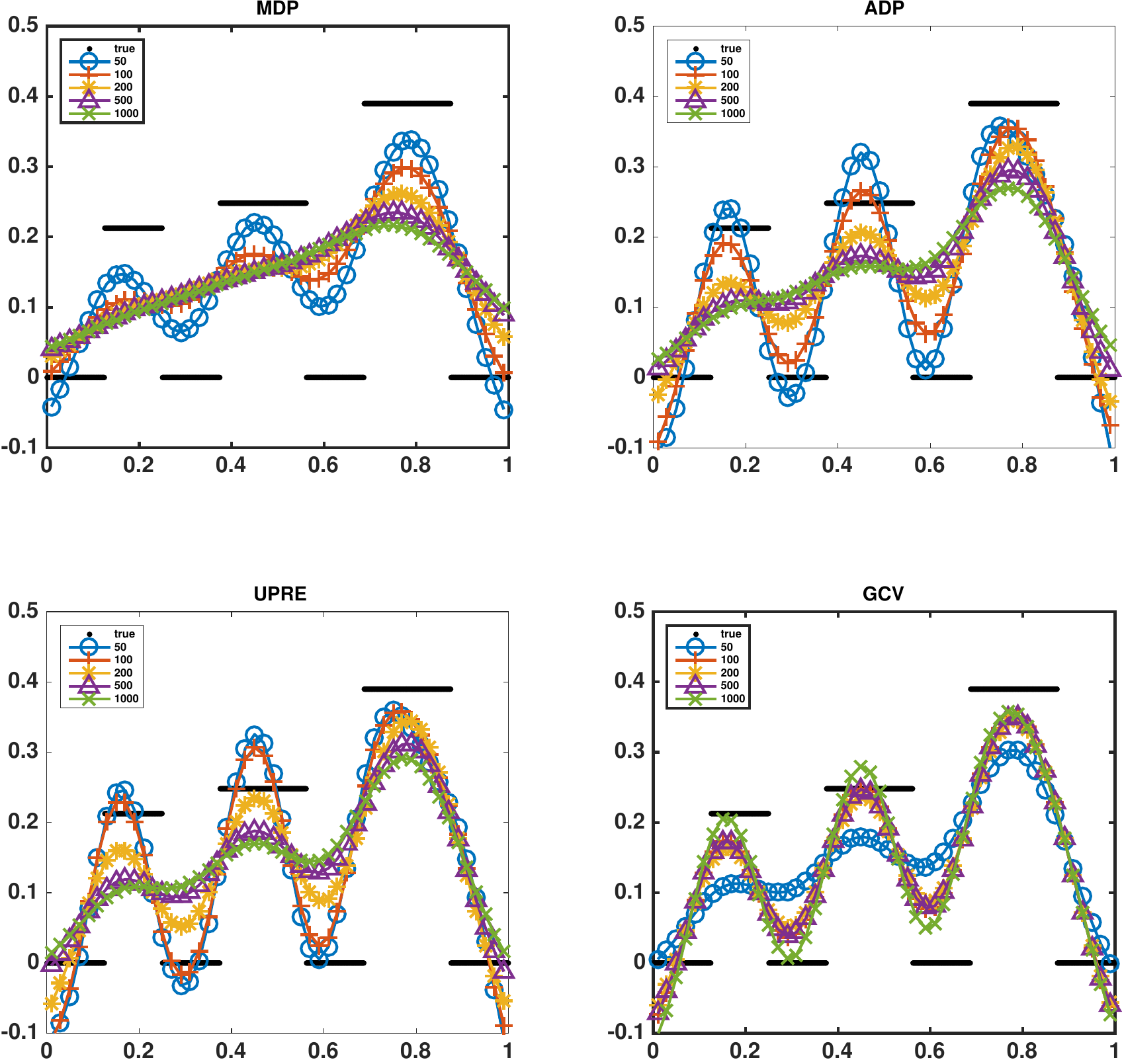}}
\end{center}
\caption{The  solutions for the noisy data illustrated in Figure~\ref{data1} with the regularization parameter found for $n=50$, $100$, $200$, $500$ and $1000$, using $\eps=10^{-15}$ for determining $p^*$, and $\zeta^2=ds^{(50)}$ and the reconstructed solution in each case for $n=1000$.  \label{discontdata}}
\end{figure}

\subsection{Slowly Decaying Spectrum}
Problem \texttt{gravity} has a quickly decaying spectrum, as shown in Figure~\ref{fig:pstar}. We now consider a  problem with a very slowly decaying spectrum, see Figure~\ref{fig:pstarderiv}, example \texttt{deriv2} from \cite{Regtools}, for which the kernel is also square integrable, see Figure~\ref{fig:deltaderiv},  $\|H(s,t)\|^2=1/90$, where $H(s,t)$ is defined on $[0,1]$ for both variables and $H(s,t)=s(t-1)$ for $s<t$ and $H(s,t)=t(s-1)$, otherwise. For this example we look at problem sizes $750$, $1000$, $1200$, $1500$, $2000$, $3000$ and $6000$. It is evident from Figure~\ref{fig:pstarderiv} that we cannot completely capture the spectrum for $N=6000$ by using smaller $n$; although as $n$ increases the spectral values closely follow spectral components for $N=6000$, for almost all terms obtained.  Picking a numerical rank is now relevant, and will exclude terms from the $N=6000$ expansion. We illustrate solutions obtained for $10\%$ and $25\%$ noise as shown in Figures~\ref{fig:pstardata}-\ref{fig:pstardata1} for exact source $f(t)=t$ for $t<0.5$ and $f(t)=(1-t)$, otherwise. The solutions obtained using  UPRE  and GCV for  different numerical ranks, $\epsilon$ decreasing from $10^{-5}$ to $10^{-8}$, corresponding to $\subl{p}$ approximately $100$, $320$, $1020$ and $3750$,  are given in Figures~\ref{deriv2soln1}-\ref{deriv2soln2} for the two noise levels. Although the spectrum decays slowly, the Picard plots, Figure~\ref{picardderiv}-\ref{picardderiv1}, show that noise enters the solution quickly for small indices, thus demonstrating that it is sufficient to use low rank, when using a single parameter estimation technique. Should one use a multi-parameter regularization one may be able to account for different windows in the spectrum, as presented in recent literature, \cite{ChEaOl:11,LSY,meadmulti}.

\begin{figure} 
\begin{centering}
\subfigure[$ \lvert \paren{{\Delta^{(n)}}}^2 \rvert $\label{fig:deltaderiv}]{\includegraphics[width= .15\textwidth]{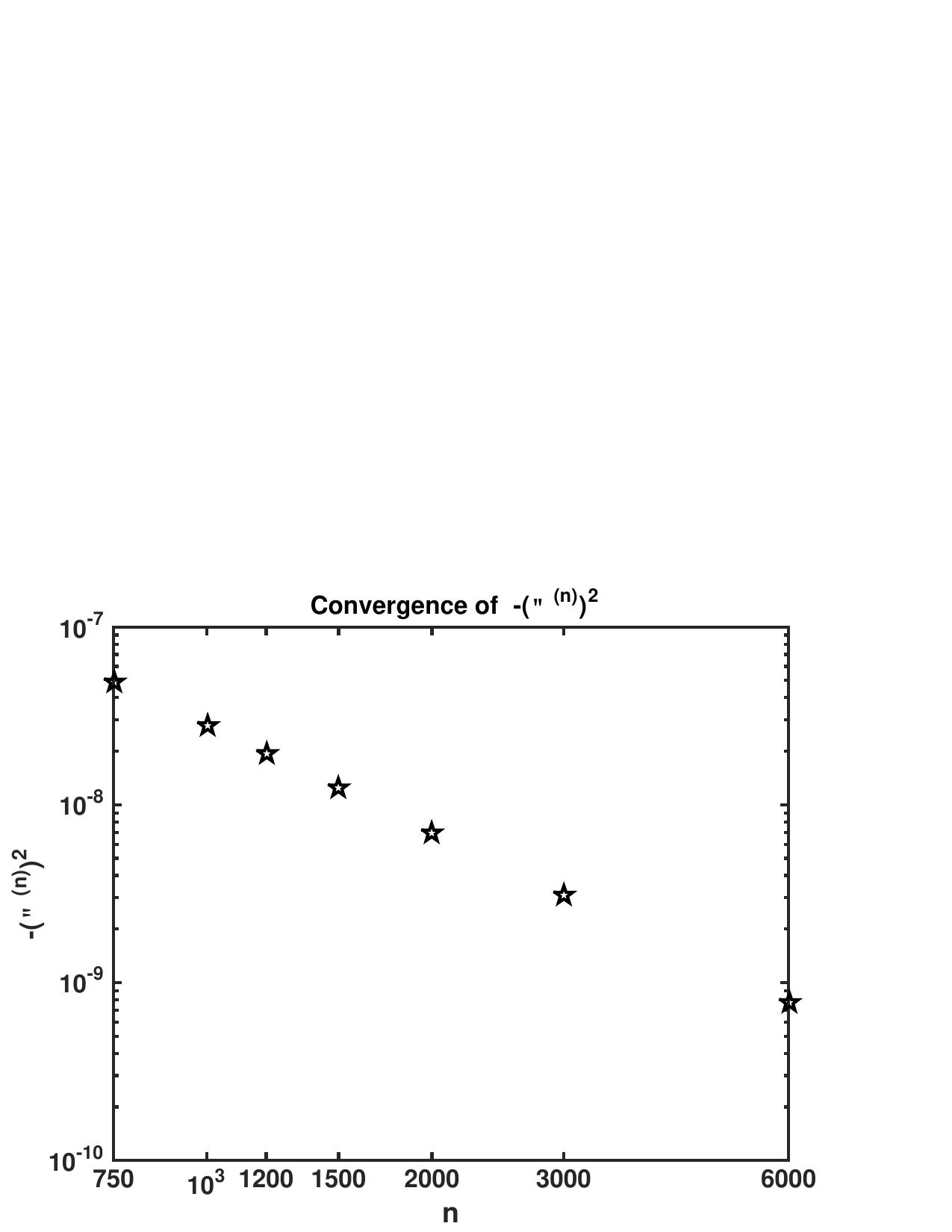}}
\subfigure[Spectrum \label{fig:pstarderiv}]{\includegraphics[width= .15\textwidth]{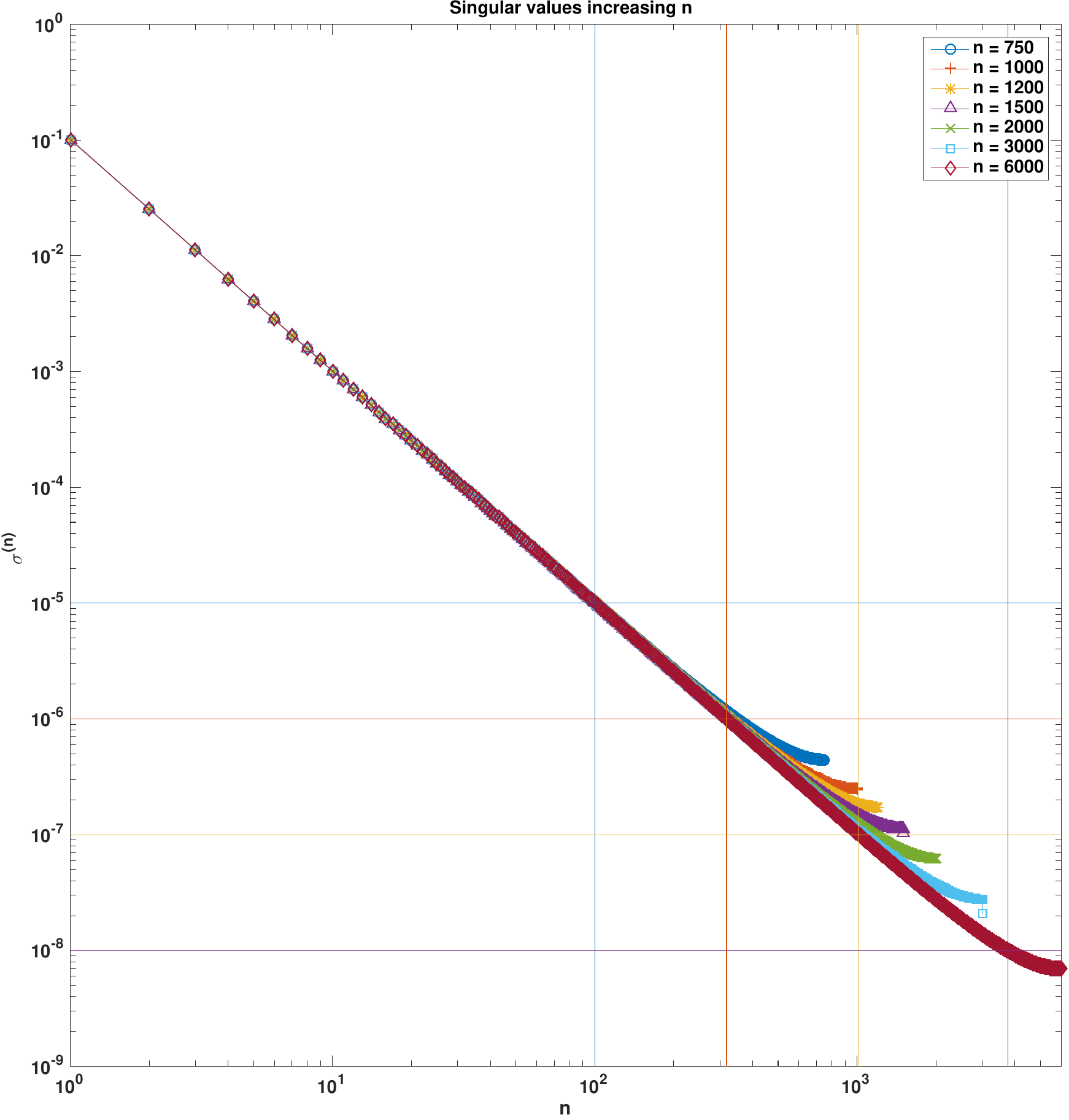}}
\subfigure[Data $10\%$ \label{fig:pstardata}]{\includegraphics[width= .15\textwidth]{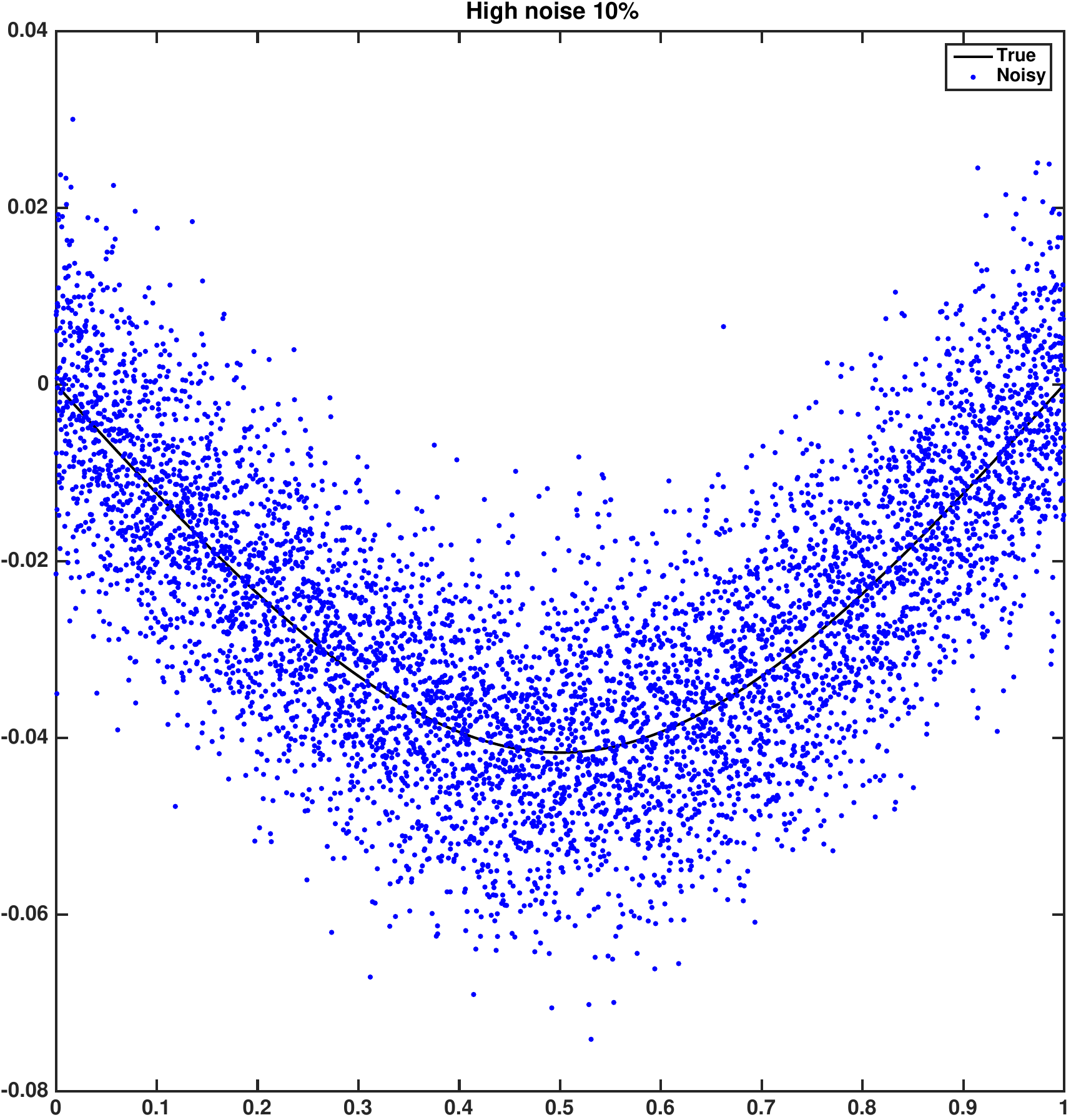}}
\subfigure[Data $25\%$\label{fig:pstardata1}]{\includegraphics[width= .15\textwidth]{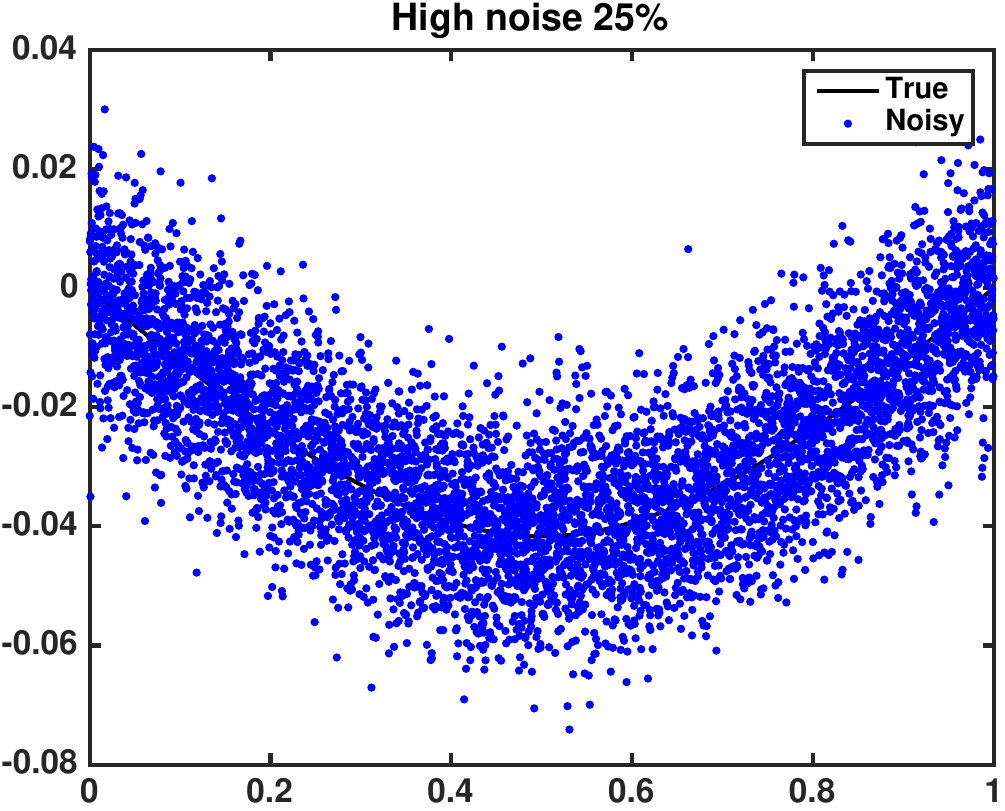}}
\subfigure[Picard $10\%$\label{picardderiv}]{\includegraphics[width= .15\textwidth]{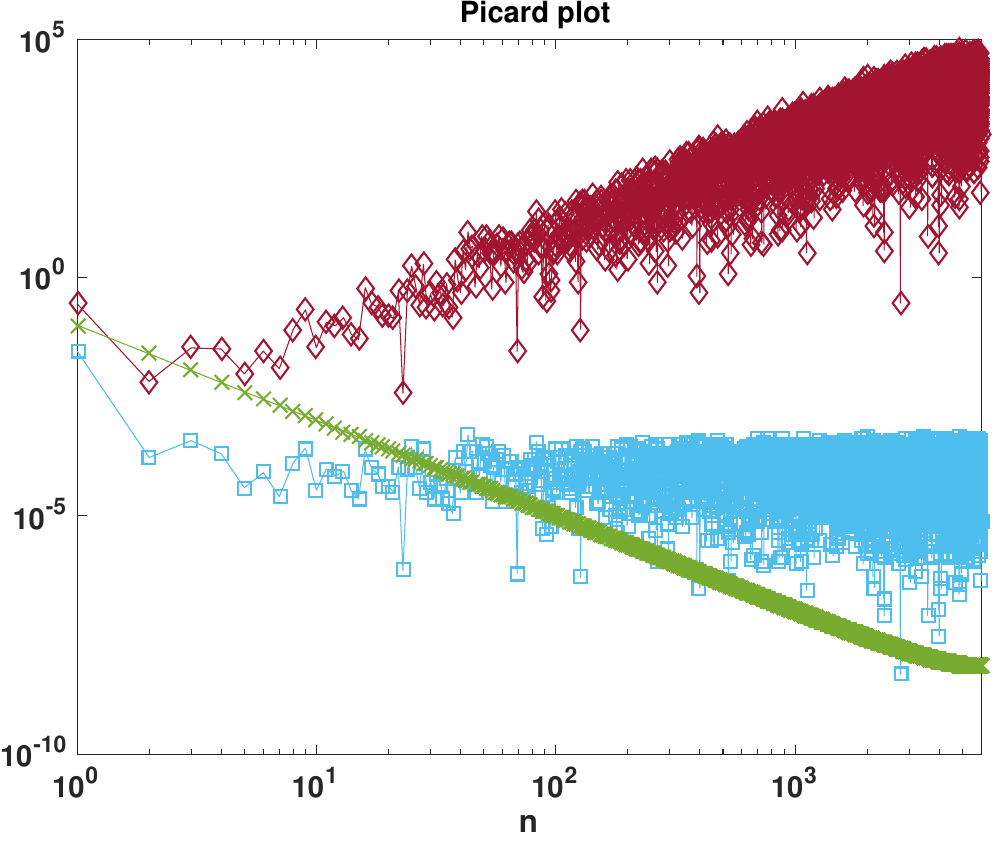}}
\subfigure[Picard $25\%$\label{picardderiv1}]{\includegraphics[width= .15\textwidth]{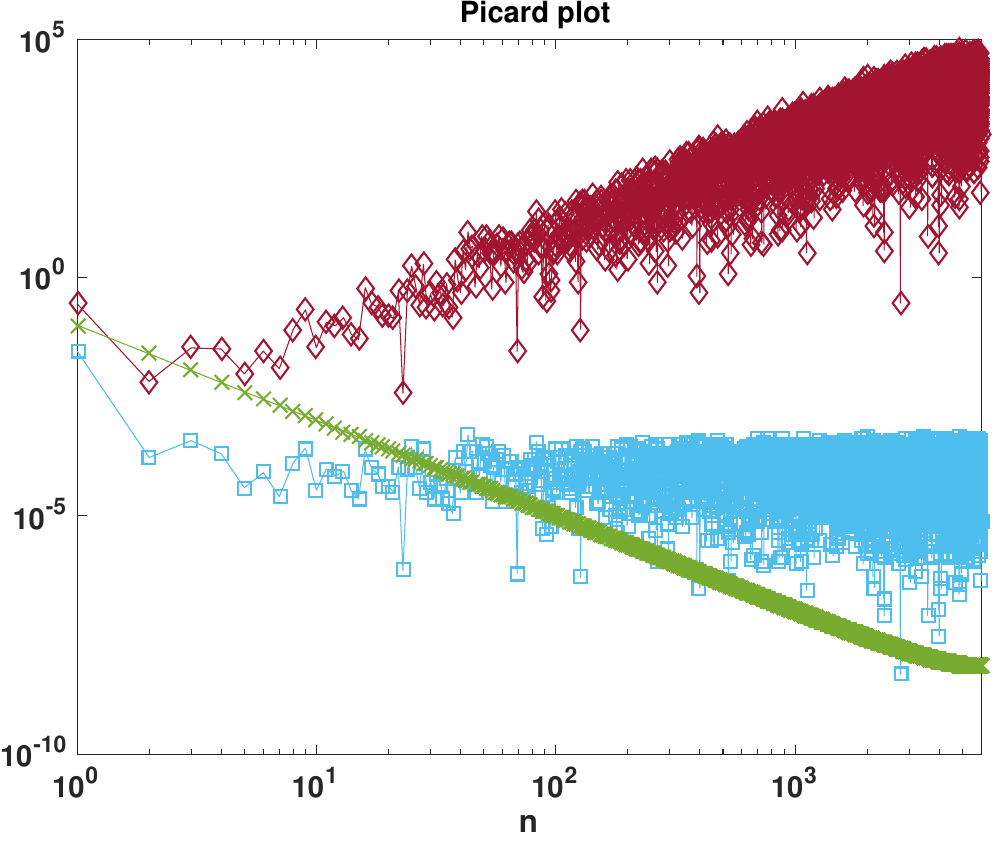}}
\caption{In \ref{fig:deltaderiv} $ \lvert \paren{{\Delta^{(n)}}}^2 \rvert $ against $n$ for problem \texttt{deriv2}. In \ref{fig:pstarderiv} the singular values plotted  against $n$ with the location of $p^*$ for $\epsilon=10^{-5}$, $10^{-6}$, $10^{-7}$ and $10^{-8}$. In \ref{fig:pstardata}-\ref{fig:pstardata1} the data with $10\%$ and $25\%$noise, and in \ref{picardderiv}-\ref{picardderiv1} the Picard plots for this data.  In these, and subsequent figures, the markers and colors are consistently determined by resolution $n$. 
}
\end{centering}
\end{figure}


\begin{figure}[H] 
\begin{centering}
\subfigure[UPRE for \texttt{deriv2} decreasing numerical rank]{\includegraphics[width= .45\textwidth]{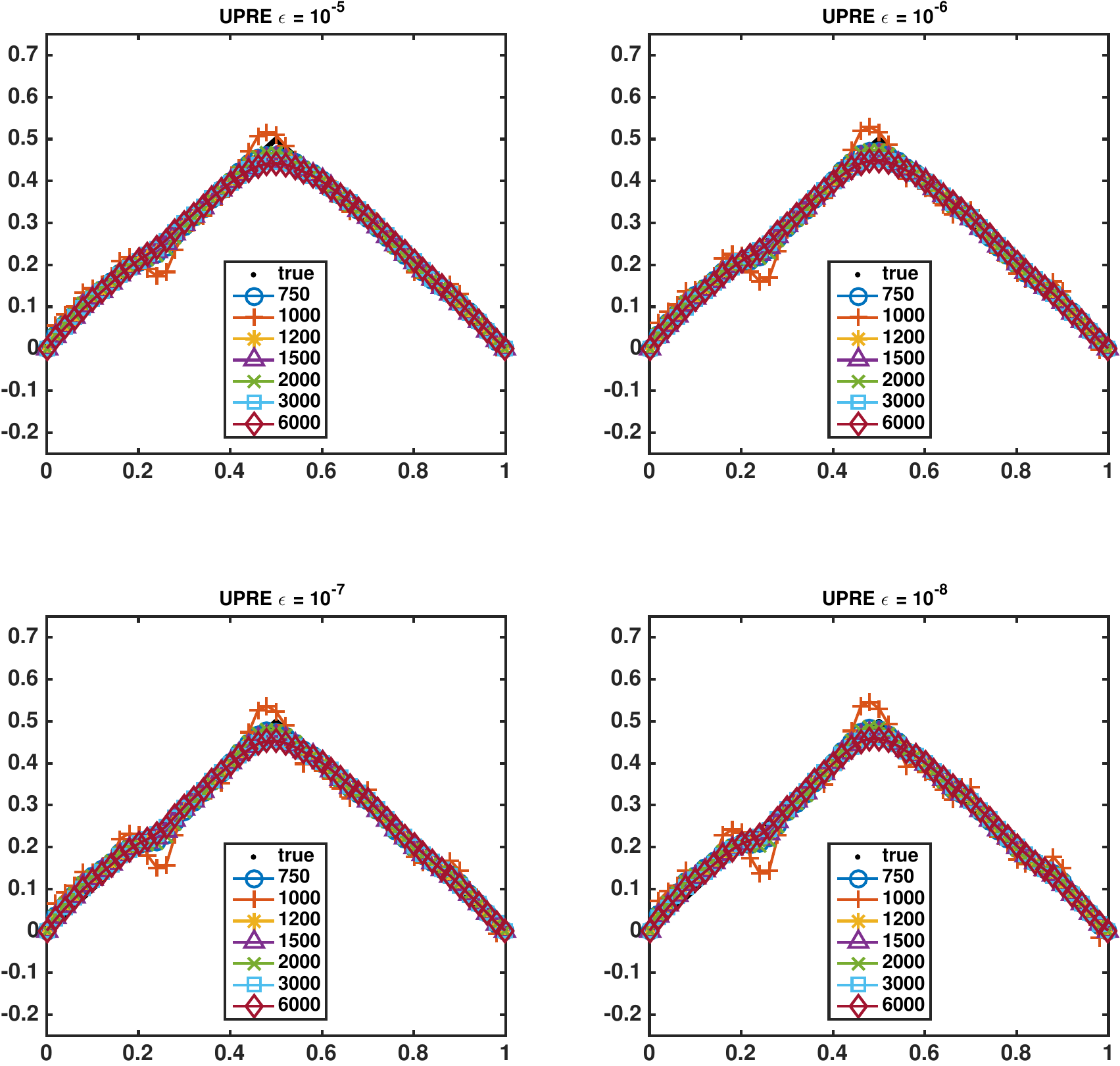}}
\subfigure[GCV for \texttt{deriv2} decreasing numerical rank]{\includegraphics[width= .45\textwidth]{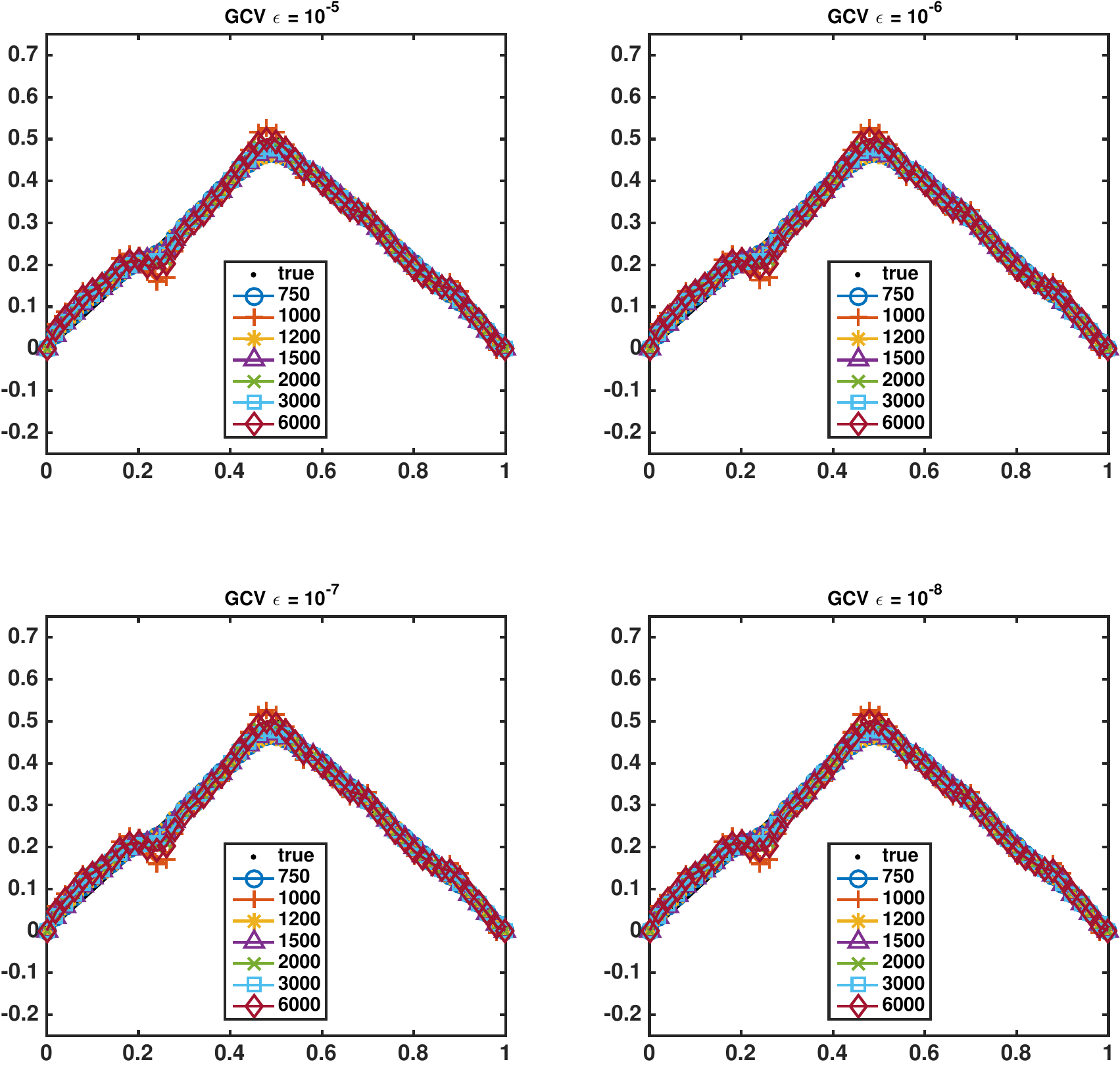}}
\caption{Solutions for problem \texttt{deriv2} with $10\%$ noise, as illustrated in Figure~\ref{fig:pstardata}, with $\lambda^{(n)}$ estimated by Algorithm~\ref{alg} and solutions reconstructed for $N=6000$, using $\zeta^2=ds^{(n)}$.  The four subplots for each method indicate solutions with decreasing numerical rank for $\epsilon$ from $10^{-5}$ to $10^{-8}$. \label{deriv2soln1}}
\end{centering}
\end{figure}
\begin{figure}[H] 
\begin{centering}
\subfigure[UPRE for \texttt{deriv2} decreasing numerical rank]{\includegraphics[width= .45\textwidth]{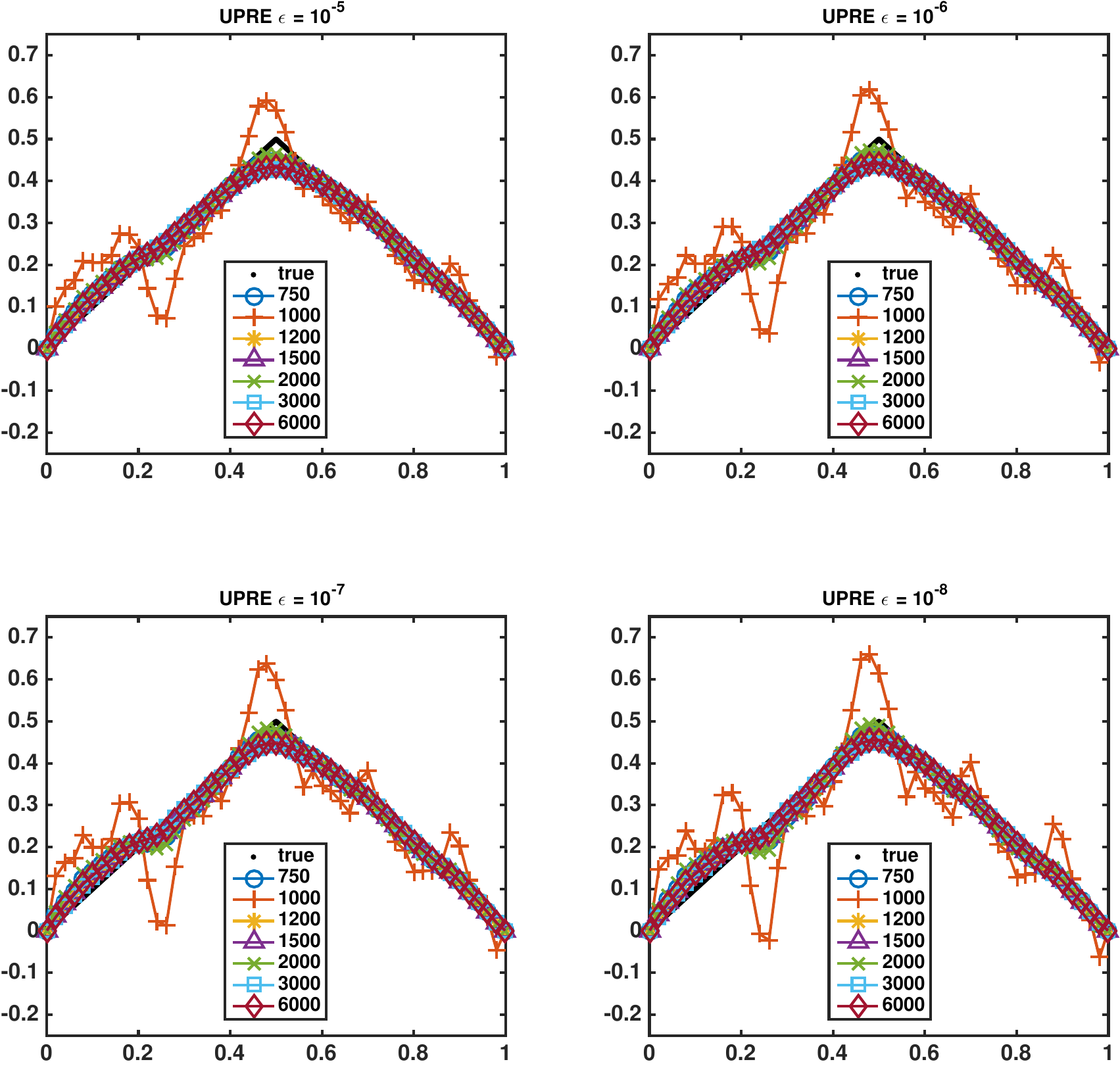}}
\subfigure[GCV for \texttt{deriv2} decreasing numerical rank]{\includegraphics[width= .45\textwidth]{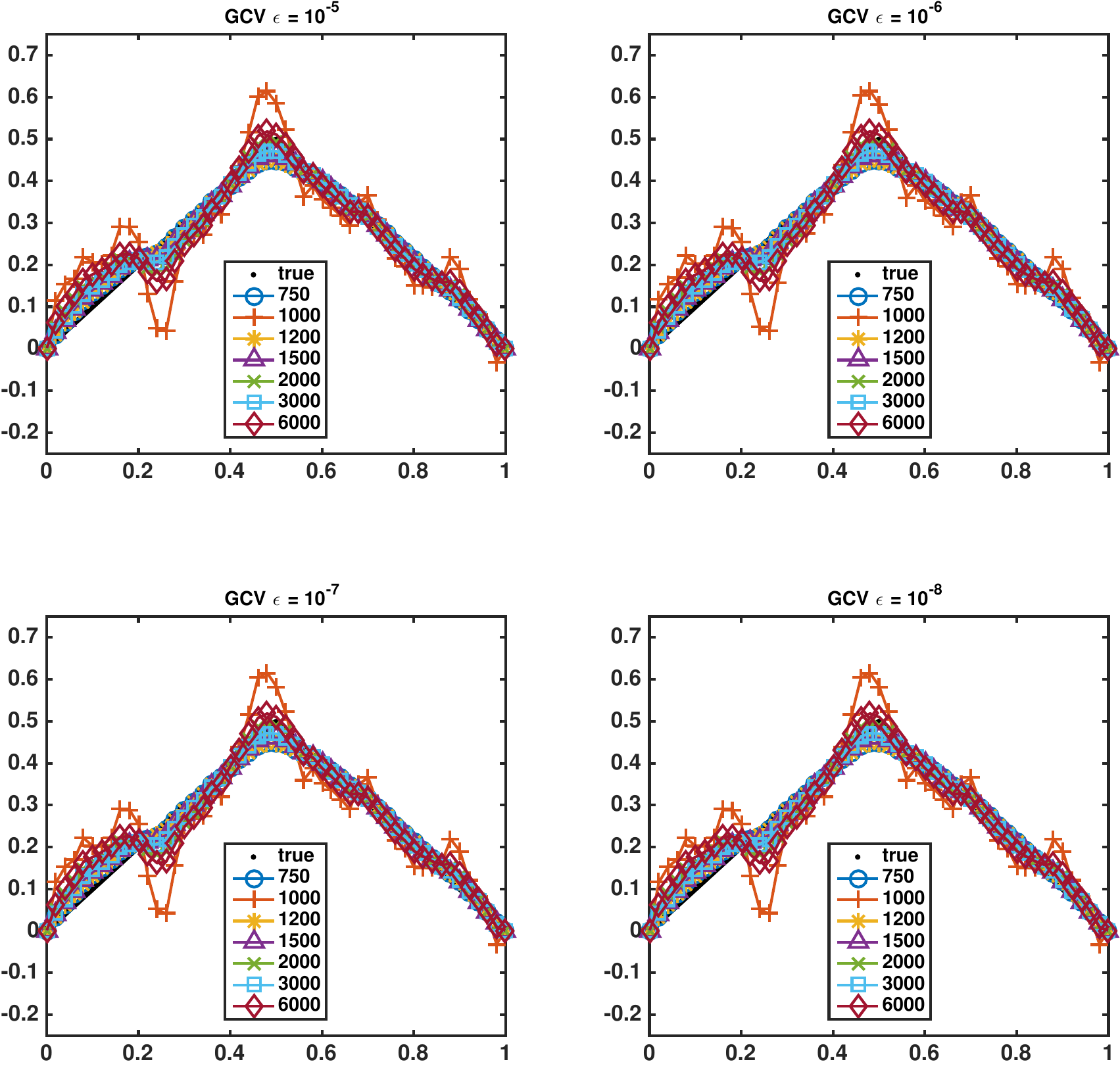}}
\caption{Solutions for problem \texttt{deriv2} with $25\%$ noise, as illustrated in Figure~\ref{fig:pstardata1}, with $\lambda^{(n)}$ estimated by Algorithm~\ref{alg} and solutions reconstructed for $N=6000$, using $\zeta^2=ds^{(n)}$.  The four subplots for each method indicate solutions with decreasing numerical rank for $\epsilon$ from $10^{-5}$ to $10^{-8}$. \label{deriv2soln2}}
\end{centering}
\end{figure}
\section{Conclusions and Future Work}\label{conclusions}
We have verified  that the theoretical relationship between the continuous SVE for   a square integrable kernel and the SVD of the  discretization of an integral equation using the Galerkin method can be exploited in the context of efficient regularization parameter selection in solving an ill-posed inverse problem. Analysis of the regularization techniques demonstrates convergence of the regularization parameter with increasing resolution for the discretization of the integral equation using the Galerkin approach. By finding the regularization parameter for a coarse representation of the system, the cost of finding the regularization parameter is negligible as compared to the solution of a fine scale problem. Moreover, exploiting numerical rank, which is  approximately preserved across resolutions for a sufficiently sampled high resolution system, mitigates the need to find the singular value decomposition for the high resolution system.  Effectively, the solution of the fine scale problem is found by projection to a coarser scale  space for the solution, on which the  dominant singular properties of the high resolution system are preserved. This provides a valid  alternative to applying a  Krylov iterative technique for the solution of the system of equations, which also uses projection to a smaller space that effectively also preserves the singular system properties from the fine scale, first presented in \cite{PaigeSau2,PaigeSau3} but by now extensively studied in amongst others \cite{ChNaOl:08,GaNoRu,HaJe,HPS,JeHa,ReSgYe,RHM:10}.

Numerical results verify both the theoretical developments and application of the technique for the solution of ill-posed integral equations, for kernels with different conditioning, and solutions which are  smooth or  piece wise constant. Although we acknowledge that Tikhonov regularization is not the method of choice for the determination of solutions which are non smooth, we note that most techniques that pose the regularization with a more relevant norm such as the $L_1$-norm \cite{SB,vogel:02,WoRo:07,Zhd}, still embed within the solution technique the need to solve using an $L_2$ norm, albeit with an operator $L$, e.g.  \eqref{genregsoln}. By judicious choice of boundary conditions, even when $L$ approximates a derivative, it is possible to approximate the derivative with an invertible operator $L$, \cite{DoRe,Lothar:2010}. Thus the application of the techniques in this paper to edge enhancing regularization is a topic for future research. 

While Vogel \cite{vogel:02} had previously provided an analysis of the convergence of regularization parameter selection techniques, including for the GCV, UPRE and MDP, he did not exploit the numerical rank to overall reduce the computational cost, in the context of the Galerkin approximation for the integral equation.  Moreover, a discussion of the extension of  these techniques for higher dimensions was not addressed. For separable and square integrable kernels, the tensor product SVD may be   used to further reduce the computational cost, with different rank estimations in each dimension of the kernel. These ideas are also relevant in the context of spatially invariant kernels which admit convolutional representations for the integral equation, and dependent on the boundary conditions, can be solved using Fourier or cosine transforms \cite{vogel:02,hansenbook,HNO}. Again these are topics of future research, with application for practically relevant large scale problems. Further, in the context of practically relevant large scale problems, an algorithmic approach to assessing sufficient convergence of the solution for increasing resolution should be developed.

\ack  
Rosemary Renaut acknowledges the support of AFOSR grant 025717: ``Development and Analysis of Non-Classical Numerical Approximation Methods", and 
NSF grant  DMS 1216559:   ``Novel Numerical Approximation Techniques for Non-Standard Sampling Regimes".

\end{document}